\newcommand{\Extend}[5]{\ext@arrow0099{\arrowfill@#1#2#3}{#4}{#5}}
\let\pa=\partial
\def\eps\epsilon
\def\cA{{\cal A}}
\def\C{\mathop{\bf C\kern 0pt}\nolimits}
\def\DD{\mathop{\bf D\kern 0pt}\nolimits}
\def\K{\mathop{\bf K\kern 0pt}\nolimits}
\def\N{\mathop{\bf N\kern 0pt}\nolimits}
\def\Q{\mathop{\bf Q\kern 0pt}\nolimits}
\newcommand{\beq}{\begin{equation}}
\newcommand{\eeq}{\end{equation}}
\newcommand{\ben}{\begin{eqnarray}}
\newcommand{\een}{\end{eqnarray}}
\newcommand{\beno}{\begin{eqnarray*}}
\newcommand{\eeno}{\end{eqnarray*}}
\def\R{\mathop{\mathbb R\kern 0pt}\nolimits}
\newtheorem{definition}{Definition}[section]
\newtheorem{theorem}{Theorem}[section]
\newtheorem{proposition}[theorem]{Proposition}
\newtheorem{lemma}[theorem]{Lemma}
\newtheorem{remark}{Remark}[section]
\newtheorem{corollary}[theorem]{Corollary}
\theoremstyle{remark}
\begin{document}

 \title[Inhomogeneous generalized Hartree equation]{ \bf The energy-critical inhomogeneous generalized Hartree equation in 3D}

 \author[C. M. Guzm\'an]{Carlos M. Guzm\'an}
\address{Department of Mathematics, UFF, Brazil}
\email{carlos.guz.j@gmail.com}

 \author[C. B. Xu]{Chengbin Xu}
\address{School of Mathematics and Statistics, Qinghai Normal University, Xining, Qinghai 810008, P.R. China. }
\email{xcbsph@163.com}

\maketitle
\begin{center}
 \textit{Dedicated to the memory of Antonio Guzm\'an}
\end{center}

\begin{abstract}
The purpose of this work is to study the $3D$ energy-critical inhomogeneous generalized Hartree equation
$$
i\pa_tu+\Delta u+|x|^{-b}(I_\alpha\ast|\cdot|^{-b}|u|^{p})|u|^{p-2}u=0,\;\ x\in\R^3,
$$
where $p=3+\alpha-2b$. We establish global well-posedness and scattering below the ground state threshold with non-radial initial data in $\dot{H}^1$. To this end, we exploit the decay of the
nonlinearity, which together with the Kenig-Merle roadmap, allows us to treat the non-radial case as the radial case. In this paper are introduced new techniques to overcome the challenges posed by the presence of the potential and the nonlocal nonlinear term of convolution type. In particular, we also show scattering for the classical generalized Hartree equation ($b=0$) assuming radial data. Additionally, in the defocusing case, we show scattering with general data. We believe that the ideas developed here are robust and can be applicable to other types of nonlinear Hartree equations. In the introduction, we discuss some open problems.

\

\noindent Mathematics Subject Classification. 35A01, 35QA55, 35P25.\quad\quad\quad
\end{abstract}

\keywords{Key words. Inhomogeneous generalized Hartree equation; Global well-posedness; Scattering}

\setcounter{section}{0}\setcounter{equation}{0}
\section{Introduction}

\noindent

We consider the initial value problem (IVP), for the focusing inhomogeneous generalized Hartree equation (which also call inhomogeneous Choquard equation)
 \begin{align}\label{INLC}
 \begin{cases}
   &i\pa_tu+\Delta u+|x|^{-b}(I_\alpha\ast|\cdot|^{-b}|u|^{p})|u|^{p-2}u=0,\ \ \ \  t\in\R,\ x\in\R^3,\\
   &u(0,x)=u_0(x) \in\dot H^1(\R^3)
 \end{cases}
 \end{align}
 where $u:\R\times\R^3\rightarrow\mathbb{C},\ p=3+\alpha-2b$. The inhomogeneous term is $|\cdot|^{-b}$ for some $b>0$. The Riesz-potential is defined on $\R^3$ by
 $$I_\alpha:=\frac{\Gamma(\frac{3-\alpha}{2})}{\Gamma(\frac{\alpha}2)\pi^{\frac{3}2}2^{\alpha}|\cdot|^{3-\alpha}}:
 =\frac{\mathcal{K}}{|\cdot|^{3-\alpha}},\ \ 0<\alpha<3.$$

The nonlinearity in \eqref{INLC} makes the equation a focusing, energy-critical model. In fact, the $\dot H^{1}(\R^3)$ norm is invariant under the standard scaling
$$u_{\lambda}(t,x)=\lambda^{\frac12}u(\lambda^2 t,\lambda x),$$
 and $u_{\lambda}(t,x)$ is also the solution of  the equation \eqref{INLC}.
Moreover,  equation \eqref{INLC} conserves the energy, defined as the sum of the kinetic and potential energies:
$$E(u):=\int_{\R^3}|\nabla u|^2-\frac1{p}(I_\alpha\ast|\cdot|^{-b}|u|^p)|x|^{-b}|u(x)|^{p}dx.$$

\eqref{INLC} is a particular case of a more general inhomogeneous nonlinear generalized Hartree equation of the form
$$i\partial_tu+\Delta u\pm |x|^{-b}(I_\alpha\ast|\cdot|^{-b}|u|^{p})|u|^{p-2}u=0,$$
for $0<\alpha <N$ and $p\geq2$, which models many physical phenomena, where the factor $|x|^{-b}$ represents
some inhomogeneity in the medium\footnote{It is worth mentioning the nonlinearity $- |x|^{-b}(I_\alpha\ast|\cdot|^{-b}|u|^{p})|u|^{p-2}u$ is known as defocusing case.} (see, \cite{Gi}). Some particular cases of this model arise in the mean-field limit of large systems of non-relativistic atoms and molecules, the propagation of electromagnetic waves in plasmas and so on. See \cite{BC,FL,R} for more details.
Very recently, this model has attracted the attention of mathematicians, focusing mainly on the mass-energy intercritical case. The local/global well-posedness was obtained in \cite{AS} via an adapted Gagliardo--Nirenberg type identity. Then, in \cite{SX}, they study the scattering theory following the argument of \cite{DM-2017-PAMS} under the radial condition. Latterly, the second author \cite{Xu} proved the scattering theory for non-radial case inspired by \cite{Mu}. In this work, we consider the energy-critical case. More precisely, we show global well-posedness and scattering for \eqref{INLC} assuming general initial data in $3D$.

\begin{definition}
We say that the solution $u$ of \eqref{INLC} scatters both in time in $\dot{H}^1(\R^3)$, if there exists $u_{\pm}\in \dot{H}^1$ such that
$$
\lim_{t\rightarrow \,\pm \infty}
\|u(t, x) - e^{it\Delta}u_{\pm}\|_{\dot{H}^1}
 = 0,
$$
where $e^{it\Delta}$ denotes the unitary group associated with the linear equation.
\end{definition}

In the limiting case $b=0$ (homogeneous case), the local and global well-posedness in the energy sub-critical case and mass-critical regime were obtained in \cite{Ca,FY}. The authors in \cite{TV} studied the energy scattering of defocusing global solutions for the inter-critical case. For the focusing case, the results were proved in \cite{Sa,Ar}. The critical-energy case has also been investigated, particularly for the $p = 2$ case known as the Hartree equation. Global well-posedness and scattering for the defocusing Hartree equation were discussed in \cite{MXZ3}. In \cite{MXZ5} treated the global existence and scattering versus finite time blow-up of solutions for focusing case and radial setting. Afterward, in \cite{LiM}, the scattering result was extended to the non-radial case. Moreover, for the generalized case with $p > 2$, the well-posedness in $H^{s_c}$ with $0\leq s_c \leq  1$ for small data was considered in \cite{AR}. Hence, a natural question arises regarding the dynamical behavior of solutions in the energy-critical setting, and even more so when considering the inhomogeneous model ($b\neq 0$). This note aims to show the global well-posedness and scattering for \eqref{INLC}, assuming general initial data. We only consider the three-dimensional case, which, we believe to be more interesting (see Remark \ref{op}). As a consequence, we also prove that the solution of \eqref{INLC} scatters for the defocusing setting. Additionally, we obtain scattering for the homogeneous case with radial data.

This work is inspired by recent studies on the inhomogeneous nonlinear Schr\"odinger equation (INLS), that is, $i\partial_tu+\Delta u+|x|^{-b}|u|^{p-2}u=0$, which study scattering assuming non-radial data in the inter-critical and critical cases (see \cite{CFGM}, \cite{DK}, \cite{GM}). We aim to address the challenges posed by the inhomogeneous models that arise from the broken translation symmetry. Previous works have focused on radial solutions to avoid these challenges. However, in this paper, we overcome these obstacles by introducing new elements in the proof. We investigate the scattering theory for \eqref{INLC} assuming non-radial initial data and under the ground state in $\dot{H}^1$. We state the main result.
\begin{theorem}\label{theorem}
 Let $0<b\leq\min\{\frac{1+\alpha}{3},\frac{\alpha}2\}$. Suppose $u_0\in \dot{H}^1(\R^3)$ satisfies
 \begin{align}\label{threshold}
 E(u_0)<E(W)\ \ and\ \ \|u_0\|_{\dot H^1}<\|W\|_{\dot H^1}.
 \end{align}
Then the solution $u$ to \eqref{INLC} is global in time and scatters in $\dot{H}^1(\R^3)$. Here, $W$ denotes the ground state, i.e., the solution to the elliptic equation
$$\Delta W+(I_\alpha\ast|\cdot|^{-b}|W|^p)|x|^{-b}|W|^{p-2}W=0.$$

\end{theorem}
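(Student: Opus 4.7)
The plan is to run the Kenig--Merle concentration-compactness and rigidity program in the $\dot H^1$-critical non-radial setting, modified to accommodate the non-local Riesz-type nonlinearity and the inhomogeneous weight $|x|^{-b}$, which breaks translation invariance. The first task is to derive a sharp weighted Hardy--Littlewood--Sobolev--Sobolev inequality for which $W$ is an extremizer; combined with conservation of $E$ and a continuity argument, this converts the hypothesis \eqref{threshold} into the energy-trapping dichotomy $\|u(t)\|_{\dot H^1}\le(1-\delta)\|W\|_{\dot H^1}$ for some $\delta>0$ throughout the maximal existence interval, yielding coercivity of the kinetic energy and, in particular, global existence in $\dot H^1$. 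In parallel, set up the local Cauchy theory by closing a contraction in $L^q_tL^r_x\cap L^\infty_t\dot H^1$ at an $\dot H^1$-admissible Strichartz pair $(q,r)$, using weighted Hardy--Littlewood--Sobolev for the Riesz convolution, Hardy--Sobolev for the $|x|^{-b}$ factor, and a fractional chain rule for $|u|^{p-2}u$. Small-data scattering and a long-time perturbation lemma then follow. The restriction $b\le\min\{\frac{1+\alpha}{3},\frac{\alpha}{2}\}$ is calibrated precisely so that the full nonlinearity lives in an admissible Strichartz space.

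\textbf{Step 2 (compactness reduction).}
Argue by contradiction: set
\beno
E_c:=\sup\{E_*:\text{every }\dot H^1\text{ solution with }E(u_0)<E_*\text{ and }\|u_0\|_{\dot H^1}<\|W\|_{\dot H^1}\text{ scatters}\}
\eeno
and suppose $E_c<E(W)$. Apply a Keraani-type linear profile decomposition in $\dot H^1$ to a minimizing sequence of non-scattering data $u_{0,n}$, producing profiles $\phi^j$ with parameters $(\lambda_n^j,t_n^j,x_n^j)$. The critical new feature in the non-radial inhomogeneous setting is the treatment of profiles whose translations escape the origin: when $|x_n^j|\to\infty$ at the relevant scale, the rescaled weight $|x_n^j+\lambda_n^j y|^{-b}$ tends to $0$ uniformly on the essential support of $\phi^j$, so the corresponding nonlinear profile is asymptotically governed by the free Schr\"odinger flow and scatters trivially. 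This is the precise sense in which the decay of $|x|^{-b}$ at infinity compensates for the broken translation symmetry, reducing matters to profiles concentrating at the origin. A standard Palais--Smale/stability argument then produces a minimal non-scattering (critical) element $u_c$ whose orbit $\{u_c(t)\}_t$ is precompact in $\dot H^1$ modulo scaling alone.

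\textbf{Step 3 (rigidity).}
Exclude $u_c$ by analyzing its modulation scale $\lambda(t)$. A self-similar finite-time blow-up scenario ($\lambda(t)\to\infty$ near a finite time) is ruled out by the usual frequency-localization and additional-regularity upgrade, exploiting the strict gap to the ground state. The remaining soliton-like scenario ($\lambda(t)$ bounded) is ruled out by a truncated virial identity
\beno
\f{d^2}{dt^2}\int\psi_R(x)|u_c(t,x)|^2\,dx,
\eeno
where $\psi_R$ smoothly truncates $|x|^2$ at radius $R$. The principal contribution is negative and uniformly bounded away from zero by the energy gap $E(u_c)<E(W)$ together with the variational coercivity of Step~1, while the error terms are absorbed using the precompactness of the orbit and the spatial decay of $|x|^{-b}$ outside the effective support of $u_c$. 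Integrating in time contradicts the boundedness of $\int\psi_R|u_c|^2$, and hence $E_c\ge E(W)$, proving the theorem.

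\textbf{Main obstacle.}
I expect the hardest part to be Step~2. Beyond the standard difficulties of nonlinear profile decompositions, one must develop weighted bilinear and Strichartz-type estimates tailored to the full structure $|x|^{-b}(I_\alpha*|\cdot|^{-b}|u|^p)|u|^{p-2}u$, combining Hardy--Littlewood--Sobolev, Hardy--Sobolev, and fractional-Leibniz tools into a single coherent bookkeeping. Making the heuristic that far-translated profiles decouple from the inhomogeneity rigorous also requires sharp decay bounds at the Strichartz level. The calibration $b\le\min\{\frac{1+\alpha}{3},\frac{\alpha}{2}\}$ is exactly what lets all of these estimates close; by contrast, once compactness is in hand, the rigidity in Step~3 follows a by-now classical template.
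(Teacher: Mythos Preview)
Your proposal is correct and follows essentially the same Kenig--Merle roadmap as the paper: local theory and energy trapping, profile decomposition with the key observation that profiles with $|x_n^j|/\lambda_n^j\to\infty$ scatter because the weight $|x|^{-b}$ vanishes there (this is exactly the paper's Proposition~\ref{N-profile}, implemented via frequency and spatial cutoffs plus stability), extraction of a minimal blow-up solution precompact modulo scaling only, and rigidity via reduced Duhamel for the finite-time case and a localized virial/Morawetz for the soliton-like case. Your identification of Step~2 as the crux, and of the condition $b\le\min\{\tfrac{1+\alpha}{3},\tfrac{\alpha}{2}\}$ as what makes the nonlinear estimates close, matches the paper's emphasis.
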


To this end, we use the robust technique introduced by Kenig--Merle to study the energy-critical nonlinear Schr\"odinger equation (NLS) with new ingredients to show the non-radial case. More precisely, we combined this approach (Kenig--Merle roadmap) together with the decaying factor in the nonlinearity. We reduce the problem of scattering to the problem of precluding the possibility of non-scattering solutions below the ground state threshold that possess certain compactness properties. The proof is by contradiction. That is, we show that there exists minimal energy blow-up solution (also called critical solution) to \eqref{INLC} under the assumption that Theorem \ref{theorem} fails via linear profile and a new nonlinear profile, which is the main new ingredient of the present work (see Proposition \ref{N-profile}). In other words, if Theorem \ref{theorem} fails, then there exists a critical solution $u_c$ of \ref{INLC} with initial data $u_c(0)$ such that
$$
\|u_c(0)\|_{\dot{H}^1}<\|W\|_{\dot{H}^1},\quad E(u_c)=E_c,\quad\textnormal{and}\quad \|u_c\|_{S(I_{\max})}=\infty,
$$
where $E_c$ is a critical value (see Proposition \ref{MEBS}). Moreover, There exists a frequency scale function $N:I_{\max}\to(0,\infty)$ such that
$$\{N(t)^{-\frac12}u(t,N(t)^{-1}x):t\in I_{\max}\}$$
is precompact in $\dot{H}^1(\R^3)$ (see Proposition \ref{precompact}).
\begin{remark}
The restriction $b\leq \min\{\frac{1+\alpha}{3},\frac{\alpha}2\}$ is a very important result to study the well-posedness and stability theory of \eqref{INLC}. It is worth mentioning that the treatment of the Hartree case is different from the nonlinear Schr\"odinger case. In particular, the nonlocal nonlinear term of convolution type, combined with the potential $|x|^{-b}$, presents several challenges in proving the technical results required to use the Kenig-Merle roadmap. Moreover, the new challenge addressed in this paper is the non-radial setting, which has recently become more interesting to the authors.
\end{remark}

\begin{remark}
It is important to highlight that Dodson and Murphy have introduced a simpler approach to prove scattering for general data. This approach combines Tao's scattering criterion, Virial-Morawetz-type estimates, and the decay of the nonlinearity. However, it is primarily effective for inter-critical cases, see \cite{Mu,Xu}. In the case of critical energy, Dodson and Murphy's ideas appear to be insufficient as they require the initial data, $u_0\in L^2$, which is not the case here. Therefore, we turn to the approach provided by the Kenig and Merle roadmap.    
\end{remark}

Here, we focus on the three-dimensional case, however we believe that much of the analysis can be extended to the more general cases, $N\geq 4$. Furthermore, compared to the homogeneous setting ($b=0$), the scattering result for dimensions $N = 3,4$, as well as for non-radial data, are still open problems. As a consequence of Theorem \ref{theorem}, we can conclude that the scattering result holds for dimensions $N = 3$, but only under the assumption of initial radial data. That is,
\begin{corollary}\label{Homog. setting} Consider the Cauchy problem associated with the classical generalized Hartree equation (\eqref{INLC} with $b=0$). Suppose radial $u_0\in \dot{H}^1(\R^3)$ such that
 \begin{align}\label{threshold hom}
 E(u_0)<E(W)\ \ and\ \ \|u_0\|_{\dot H^1}<\|W\|_{\dot H^1}.
 \end{align}
Then the solution $u$ is global in time and scatters both in time.
\end{corollary}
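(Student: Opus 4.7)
The plan is to run exactly the Kenig--Merle concentration-compactness/rigidity argument that underlies Theorem \ref{theorem}, with the radiality hypothesis on $u_0$ taking over the role that the inhomogeneous factor $|x|^{-b}$ played in the non-radial setting. The reason Theorem \ref{theorem} requires $b>0$ is that $|x|^{-b}$ breaks the translation symmetry of the equation, which in turn eliminates spatial-translation parameters from the linear profile decomposition and lets the nonlinear profiles be controlled by compactness near the origin. When $b=0$ the equation is translation invariant, but on the subspace of radial functions in $\dot H^1(\R^3)$ this issue disappears automatically: any linear profile decomposition along a bounded radial sequence in $\dot H^1$ can be taken with only scaling (and time-translation) parameters.

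First I would establish the $\dot H^1$ local well-posedness, small-data scattering, and stability theory for the radial $b=0$ equation. Since $p=3+\alpha$ is energy-critical and $I_\alpha$ acts nicely on Lebesgue spaces via the Hardy--Littlewood--Sobolev inequality, the Strichartz analysis is a clean version of the one carried out in the main text for $b>0$ (in fact easier, since there is no singular weight to handle). Next I would invoke the variational characterization of the homogeneous ground state $W$ solving $\Delta W+(I_\alpha\ast|W|^p)|W|^{p-2}W=0$; the usual convexity/concavity argument converts the hypothesis \eqref{threshold hom} into the energy-trapping estimate $\|\nabla u(t)\|_{L^2}<\|\nabla W\|_{L^2}$ for as long as $u$ exists, together with the coercive control of the energy from below.

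Assuming the corollary fails, a standard induction on energy combined with a radial linear profile decomposition (of Keraani type, with no spatial translations) and a radial version of the nonlinear profile decomposition in Proposition \ref{N-profile} yields a minimal-energy blow-up solution $u_c$ whose orbit $\{\lambda(t)^{-1/2}u_c(t,\lambda(t)^{-1}x):t\in I_{\max}\}$ is precompact in $\dot H^1(\R^3)$, for some frequency-scale function $\lambda(t)$. Radiality is preserved along the flow, so no translation parameter $x(t)$ appears. Using the standard trichotomy for $\lambda$ (soliton-like, self-similar, or double high-to-low frequency cascade), the rigidity step is completed by a radial virial--Morawetz estimate tailored to the nonlocal Hartree nonlinearity, combined with conservation of energy and the coercive lower bound from the variational step; these rule out each scenario and produce a contradiction.

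The main obstacle I anticipate is the rigidity phase: truncated virial and Morawetz computations for convolution-type nonlinearities generate nonlocal error terms that must be controlled via Hardy--Littlewood--Sobolev together with the compactness of the orbit, and the absence of the decaying factor $|x|^{-b}$ makes these bounds more delicate than in the $b>0$ case of Theorem \ref{theorem}. The radial hypothesis compensates by supplying the improved radial Sobolev (Strauss) decay, which is precisely what allows the nonlocal virial identity to close and the critical element to be excluded.
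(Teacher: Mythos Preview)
Your proposal is correct and follows the same Kenig--Merle roadmap as the paper, with the key observation that radiality forces $x_n^j\equiv 0$ in the profile decomposition. Two points where the paper is simpler than you anticipate are worth flagging. First, Proposition~\ref{N-profile} is not adapted to a ``radial version'' but simply \emph{bypassed}: since $x_n^j\equiv 0$ for radial sequences, the scenario $|x_n^j|/\lambda_n^j\to\infty$ never occurs, and nonlinear profiles are built directly from the linear ones exactly as in the $x_n^j\equiv 0$ branch of the proof of Proposition~\ref{MEBS}. Second, the rigidity phase is less delicate than you suggest: the paper uses neither the three-scenario trichotomy nor radial Strauss decay, but the simpler Kenig--Merle dichotomy (finite-time blow-up versus $T_{\max}=\infty$ with $N(t)\ge 1$) together with the truncated virial identity of Lemma~\ref{M-identity}. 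The nonlocal convolution term in that identity has the correct sign on the region $\{|x|\le R\}$ irrespective of $b$, and the error terms on $\{|x|>R\}$ are controlled purely by the tightness furnished by precompactness of the orbit in $\dot H^1$; the decaying factor $|x|^{-b}$ plays no role in this step, so the $b=0$ rigidity is not harder.
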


The proof follows mainly from the ideas introduced in the main new ingredient of this work (see Proposition 4.1). We explain in more detail. For the classical nonlinear Schr\"odinger model (NLS), the difference between the radial and non-radial cases is that the minimal non-scattering solutions are parametrized in part by a spatial center $x(t)$, which are ruled out soliton-type solutions as long as one has some control over the size of $x(t)$. The case of radial solutions leads to the best-case scenario $x(t) \equiv 0$. For the inhomogeneous model, we have the same situation (but without radial data), that is, due to the decaying factor in the nonlinearity, a solution with $|x(t)| \rightarrow \infty$ should behave like an approximate solution to the linear Schr\"odinger equation and so decay and scatter. Thus, we obtain minimal non-scattering solutions that must obey $x(t) \equiv 0$.

 Finally, we obtain the scattering result for the defocusing case. It also can be seen as a consequence of Theorem \ref{theorem}. Here we remove the conditions \eqref{threshold} because the norm $\|\cdot\|_{\dot{H}^1}$ is always bounded and all the energies are positive and comparable with the kinetic energy, it is the easier case.

 \begin{corollary}\label{theorem defo}\label{def case}
 Let $0<b\leq\min\{\frac{1+\alpha}{3},\frac{\alpha}2\}$. For any $u_0\in \dot{H}^1(\R^3)$, there exists a unique global solution to
 $$
i\pa_tu+\Delta u-|x|^{-b}(I_\alpha\ast|\cdot|^{-b}|u|^{p})|u|^{p-2}u=0,
$$
with initial data $u_0$. Furthermore, this global solution scatters both in time.
\end{corollary}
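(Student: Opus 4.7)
The plan is to adapt the Kenig--Merle concentration--compactness/rigidity scheme used to prove Theorem \ref{theorem} to the defocusing setting, exploiting the fact that the defocusing energy
$$E(u) = \int_{\R^3} |\nabla u|^2 + \frac{1}{p}\bigl(I_\alpha\ast |\cdot|^{-b}|u|^{p}\bigr)|x|^{-b}|u|^{p}\,dx$$
is manifestly nonnegative and controls the kinetic energy: $\|\nabla u(t)\|_{L^2}^2 \le E(u_0)$ on the maximal interval of existence. In particular the ground state threshold \eqref{threshold} and the variational machinery built around $W$ are no longer needed, and a uniform $L^\infty_t\dot H^1_x$ bound is provided directly by energy conservation.

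First I would combine this \emph{a priori} bound with the local well-posedness and stability theory already developed for \eqref{INLC} to conclude global existence for arbitrary $u_0\in\dot H^1(\R^3)$. Next I would run the same concentration--compactness scheme as in the focusing problem: assuming scattering fails, set $E_c$ to be the infimum of the energies of non-scattering solutions and apply the linear profile decomposition together with the nonlinear profile construction of Proposition \ref{N-profile} to build a minimal non-scattering solution $u_c$ with $E(u_c)=E_c$. By the arguments of Proposition \ref{MEBS} and Proposition \ref{precompact}, the orbit of $u_c$, after rescaling by a frequency function $N(t)$ and translation by a spatial center $x(t)$, is precompact in $\dot H^1(\R^3)$.

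The rigidity step then eliminates $u_c$. As in Theorem \ref{theorem}, the decay factor $|x|^{-b}$ forces the translation parameter to remain bounded: if $|x(t)|\to\infty$ along a sequence, the nonlinearity becomes effectively negligible on the region carrying the compact mass, so the profile would evolve like a free Schr\"odinger solution and hence scatter, contradicting minimality. Once $x(t)$ stays bounded one reduces to $x(t)\equiv 0$ and invokes a truncated virial/Morawetz identity. Here the defocusing sign works in our favor: the nonlinear contribution to the virial is of definite positive sign, and combined with precompactness this yields $\|\nabla u_c(t)\|_{L^2}\equiv 0$, forcing $u_c\equiv 0$ and contradicting $E_c>0$.

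The step I expect to be most delicate is the virial portion of the rigidity argument: although the defocusing sign removes the need for variational estimates relative to $W$, one still has to set up a truncated virial identity compatible with the nonlocal convolution nonlinearity and the singular weight $|x|^{-b}$, and to absorb the error terms using precompactness together with the hypothesis $b\le\min\{(1+\alpha)/3,\alpha/2\}$. Once this bookkeeping is done the remaining arguments amount to a transcription of the proof of Theorem \ref{theorem}, with every variational input replaced by the direct energy bound, which is precisely why the defocusing case is \emph{easier}, as noted in the remark preceding the corollary.
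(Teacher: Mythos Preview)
Your proposal follows essentially the same route as the paper: drop the threshold hypotheses, use the manifest positivity of the defocusing energy to get $E(u)\simeq\|\nabla u\|_{L^2}^2$ in place of Lemma~\ref{energy-tra}, and otherwise run the Kenig--Merle machinery of Sections~4--5 unchanged. The paper's own proof is in fact shorter than your sketch; it simply observes that one may set $E_c=\sup\{c:\text{every solution with }E(u_0)<c\text{ scatters}\}$ and argue by contradiction that $E_c<\infty$ is impossible, deferring all details to the focusing proof.

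One point deserves correction. You write that global existence follows by combining the \emph{a priori} bound $\|\nabla u(t)\|_{L^2}^2\le E(u_0)$ with the local theory. In the energy-critical regime this inference fails: the local existence time in Proposition~\ref{LWP} depends on the profile of the data (through $\|e^{it\Delta}u_0\|_{S^1(I)}$), not merely on its $\dot H^1$ norm, so a uniform kinetic-energy bound does not by itself preclude finite-time blow-up. In the paper's scheme global existence is not proved first and separately; rather it is a byproduct of the full concentration--compactness/rigidity argument (Section~5.1 rules out $T_{\max}<\infty$ for the compact critical solution, and once $E_c=\infty$ is established every solution scatters, hence is global). Your outline should be reorganized accordingly: run the whole argument to conclude scattering, and deduce global existence as a consequence.
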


To do that, we make small changes in the proof of Theorem \ref{theorem}.

\begin{remark}[{\bf Some open problems}]\label{op}

We conclude the introduction by discussing some open problems. The first open problem is to prove global well-posedness and scattering to \eqref{INLC} for higher dimensions ($N\geq 4$) in both the focusing and defocusing cases.\\
\indent In the particular case, $b=0$, the scattering issue is also an open problem for non-radial data when $N\geq 3$. In this article, we show scattering in $3D$ assuming initial radial data. We believe that for dimensions $N\geq 5$, the result can be obtained using the ideas introduced in the NLS model \cite{KV1}. However, when $N=\{3,4\}$, the scattering result for non-radial data is expected to be more complicated, similar to what happens in the classical Schrödinger equation. By the way, the $3D$ case in the NLS setting is a famous problem in the area, which is why we chose to focus on it in this article. We observe that for the inhomogeneous model, we were able to prove the scattering result in three dimensions, while the homogeneous case is still an open problem.

\end{remark}
This article is organized as follows:
In Section 2, We introduce some notations and results related to the local and stability theory for \eqref{INLC}. In Section $3$, we study the linear profile decomposition as well as establish some results related to the variational characterization of the ground state.
In Section 4, we prove that there exists minimal energy blow-up solution to \eqref{INLC} under the assumption that Theorem \ref{theorem} fails, via linear profile and a new nonlinear profile which is the main ingredient of the work (Proposition \ref{N-profile}). Finally, in Section $5$, We prove the compactness of the minimal energy blow-up solution and preclude the possibility of compact sub-threshold solutions by refined Duhamel formula and Morawetz estimate which implies assumption does not hold, hence Theorem \ref{theorem} holds. Moreover, we show the corollaries.

{\bf Acknowledgments.} C.M.G. was partially supported by Conselho Nacional de Desenvolvimento Científico e Tecnologico -
CNPq and Fundação de Amparo à Pesquisa do Estado do Rio de Janeiro - FAPERJ (Brazil). C.M.G would like to dedicate this paper to the memory of his father, Antonio Guzm\'an Rodríguez, and thank him for his unconditional love and support.

\section{Preliminaries}

\noindent

Let us start this section by giving some notations which
will be used throughout this paper. Moreover, we study the local well-posedness as well as the stability result.

We use $X\lesssim Y$ to denote $X\leq CY$ for some constant $C>0$. Similarly, $X\lesssim_{u} Y$ indicates there exists a constant $C:=C(u)$ depending on $u$ such that $X\leq C(u)Y$.
We also use the big-oh notation $\mathcal{O}$. e.g. $A=\mathcal{O}(B)$ indicates $A\leq CB$ for constant $C>0$.
The derivative operator $\nabla$ refers to the spatial variable only.
We use $L^r(\mathbb{R}^N)$ to denote the Banach space of functions $f:\mathbb{R}^3\rightarrow\mathbb{C}$ whose norm
$$\|f\|_r:=\|f\|_{L^r}=\Big(\int_{\mathbb{R}^3}|f(x)|^r dx\Big)^{\frac1r}$$
is finite, with the usual modifications when $r=\infty$. For any non-negative integer $k$,
we denote by $H^{k,r}(\mathbb{R}^3)$ the Sobolev space defined as the closure of smooth compactly supported functions in the norm $\|f\|_{H^{k,r}}=\sum_{|\alpha|\leq k}\|\frac{\partial^{\alpha}f}{\partial x^{\alpha}}\|_r$, and we denote it by $H^k$ when $r=2$.
For a time slab $I$, we use $L_t^q(I;L_x^r(\mathbb{R}^3))$ to denote the space-time norm
\begin{align*}
  \|f\|_{L_{t}^qL^r_x(I\times \R^3)}=\bigg(\int_{I}\|f(t,x)\|_{L^r_x}^q dt\bigg)^\frac{1}{q}
\end{align*}
with the usual modifications when $q$ or $r$ is infinite, sometimes we use $\|f\|_{L^q(I;L^r)}$ or $\|f\|_{L^qL^r(I\times\mathbb{R}^3)}$ for short.

We also recall Bernstein's inequality and some Strichartz estimates associated with the linear Schr\"odinger propagator. Let $ \psi(\xi) $ be a radial smooth function supported in the ball $ \{ \xi \in \R^3: \vert \xi \vert \le \frac{11}{10} \} $ and equal to 1 on the ball $ \{ \xi \in \R^3 : \vert \xi \vert \le 1 \} $.
For each number $ N > 0 $, we define the Fourier multipliers
\begin{equation*}
\begin{aligned}
\widehat{P_{\le N} g} (\xi) & ~ := \psi \left( \frac{\xi}{N} \right) \hat{g}(\xi), \\
\widehat{P_{> N} g} (\xi) & ~ := \left( 1 - \psi \left( \frac{\xi}{N} \right) \right) \hat{g}(\xi), \\
\widehat{P_N g} (\xi) & ~ := \left( \psi \left( \frac{\xi}{N} \right) - \psi \left( \frac{2\xi}{N} \right) \right) \hat{g}(\xi),
\end{aligned}
\end{equation*}
and similarly $ P_{< N} $ and $ P_{\ge N} $. We also define
\[
P_{M < \cdot \le N} := P_{\le N} - P_{\le M} = \sum_{M < N^\prime \le N} P_{N^\prime},
\]
whenever $ M < N $.
We usually use these multipliers when $M$ and $N$ are dyadic numbers. It is worth saying that all summations over $M$ or $N$ are understood to be over dyadic numbers. Like all Fourier multipliers, the Littlewood--Paley operators commute with the Sch\"odinger flow $e^{it\Delta}$.

As for some applications of the Littlewood--Paley theory, we have the following lemma.
\begin{lemma}[Bernstein inequalities]\label{Bern}
	For any $ 1 \le p \le q \le \infty $ and $ s \ge 0 $,
	\begin{equation*}
	\begin{aligned}
	\Vert P_{\ge N} g \Vert_{L_x^p(\R^3)} \lesssim & ~ N^{-s} \Vert \vert \nabla \vert^s P_{\ge N} g \Vert_{L_x^p(\R^3)} \lesssim N^{-s} \Vert \Vert \nabla \vert^s g \Vert_{L_x^p(\R^3)}, \\
	\Vert \vert \nabla \vert^s P_{\le N} g \Vert_{L_x^p(\R^3)} \lesssim & ~ N^s \Vert P_{\le N} g \Vert_{L_x^p(\R^3)} \lesssim N^s \Vert g \Vert_{L_x^p(\R^3)}, \\
	\Vert \vert \nabla \vert^{\pm s} P_N g \Vert_{L_x^p(\R^6)} \lesssim & ~ N^{\pm s} \Vert P_N g \Vert_{L_x^p(\R^3)} \lesssim N^{\pm s} \Vert g \Vert_{L_x^p(\R^3)} ,\\
	\Vert P_N g \Vert_{L_x^q(\R^3)} \lesssim & ~ N^{\frac{3}{p} - \frac{3}{q}} \Vert P_N g \Vert_{L_x^p(\R^3)},
	\end{aligned}
	\end{equation*}
	where $ \vert \nabla \vert^s $ is the classical fractional-order operator.
\end{lemma}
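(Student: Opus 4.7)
The plan is to reduce every inequality to the standard fact that each Littlewood--Paley multiplier is convolution with a Schwartz kernel whose scaling is determined by the frequency $N$, and then to apply Young's convolution inequality. More precisely, for the projector $P_N$ with symbol $\varphi(\xi/N):=\psi(\xi/N)-\psi(2\xi/N)$, I would write
\begin{equation*}
P_N g = K_N * g, \qquad K_N(x) = N^3 K(Nx), \qquad K := \mathcal{F}^{-1}\varphi,
\end{equation*}
and analogously $P_{\le N}g = \psi_N^\vee * g$ with $\psi_N^\vee(x)=N^3(\mathcal{F}^{-1}\psi)(Nx)$, and $P_{\ge N} = I - P_{<N}$. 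Since $\varphi$ and $\psi$ are Schwartz functions, so are $K$ and $\mathcal{F}^{-1}\psi$; in particular all their $L^r$ norms are finite and independent of $N$.

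For the $L^p\to L^p$ bounds, I would use Young with $r=1$: $\|K_N\|_{L^1}=\|K\|_{L^1}$ is $N$-independent by scaling, which immediately gives $\|P_N g\|_{L^p}\lesssim \|g\|_{L^p}$, and similarly for $P_{\le N}$ and $P_{\ge N}$. For the gain/loss of derivatives in (i)--(iii), I would observe that on the frequency support of $P_N$ one has $|\xi|\sim N$, so the symbol of $|\nabla|^{\pm s}P_N$ can be written as $(N^{\pm s})\tilde\varphi_N(\xi)$ with $\tilde\varphi(\xi):=|\xi|^{\pm s}\varphi(\xi)$ Schwartz (the $+s$ case is obvious; for $-s$ one uses that $\varphi$ vanishes near the origin, so $|\xi|^{-s}\varphi$ is smooth). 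Then $|\nabla|^{\pm s}P_N = N^{\pm s}\tilde K_N *(\cdot)$ with $\tilde K_N$ having $L^1$ norm independent of $N$, hence the desired $N^{\pm s}$-bound follows from Young. The bound $\||\nabla|^s P_{\le N}g\|_{L^p}\lesssim N^s\|P_{\le N}g\|_{L^p}$ is handled identically, using a fattened multiplier $\tilde\psi$ equal to $1$ on the support of $\psi$ so that $|\nabla|^s P_{\le N}=N^s P_{\le N}\cdot(\text{mollifier with Schwartz $L^1$ kernel})$ after factoring $|\xi|^s\tilde\psi(\xi/N)$. For the reverse inequality in (i) one writes $P_{\ge N}g$ $=$ $(P_{\ge N}|\nabla|^{-s})|\nabla|^s g$ and applies the same argument with symbol $|\xi|^{-s}(1-\psi(\xi/N))$, using the fact that the factor $1-\psi(\xi/N)$ is supported away from the origin.

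For the $L^p\to L^q$ bound (iv), I would apply Young with exponent $r$ defined by $1+\tfrac1q=\tfrac1p+\tfrac1r$: then $\|P_N g\|_{L^q}\le \|K_N\|_{L^r}\|g\|_{L^p}$, and a change of variables gives $\|K_N\|_{L^r}=N^{3-3/r}\|K\|_{L^r}=N^{3/p-3/q}\|K\|_{L^r}$, with $\|K\|_{L^r}$ finite since $K$ is Schwartz. Replacing $g$ by $P_N g$ (using that $P_N$ is essentially idempotent up to a fattened multiplier) yields the stated estimate with $\|P_N g\|_{L^p}$ on the right-hand side.

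The only mildly delicate point -- and the one I would be careful about -- is the negative-regularity case $|\nabla|^{-s}P_N$ and the inequality $\|P_{\ge N}g\|_{L^p}\lesssim N^{-s}\||\nabla|^s g\|_{L^p}$: one must exploit the cutoff away from $\xi=0$ to ensure $|\xi|^{-s}$ times the frequency cutoff is a Schwartz symbol, so that the convolution kernel remains integrable uniformly in $N$. Once this is verified, every inequality follows from Young and scaling, with no other input.
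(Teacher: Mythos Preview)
The paper does not prove this lemma; it is recorded as a standard fact from Littlewood--Paley theory, so there is no ``paper's proof'' to compare against. Your approach via convolution kernels, scaling, and Young's inequality is the standard one and is essentially correct.

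One small inaccuracy worth flagging: the multipliers you invoke for the $P_{\le N}$ and $P_{\ge N}$ bounds are not Schwartz. The symbol $|\xi|^{s}\tilde\psi(\xi)$ fails to be smooth at the origin when $s\notin 2\mathbb{N}$, and the symbol $|\xi|^{-s}(1-\psi(\xi))$ decays only like $|\xi|^{-s}$ at infinity, so neither is Schwartz and neither has a Schwartz kernel. What remains true---and is all Young's inequality needs---is that both kernels lie in $L^1(\R^3)$ with norm independent of $N$ after rescaling: for the first one checks $|\check m(x)|\lesssim \langle x\rangle^{-3-s}$ by splitting the frequency integral at $|\xi|\sim|x|^{-1}$ and integrating by parts on the outer piece; for the second, smoothness of the symbol gives rapid decay of $\check m$ at infinity while near $x=0$ one has $\check m(x)=c|x|^{s-3}+O(1)$, which is locally integrable for $s>0$. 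Alternatively, you can bypass this entirely by summing the clean annular estimates $\||\nabla|^{\pm s}P_M g\|_{L^p}\lesssim M^{\pm s}\|g\|_{L^p}$ (where the symbol genuinely is Schwartz) over dyadic $M\le N$ or $M\ge N$; the geometric series converges for $s>0$ and yields the $P_{\le N}$ and $P_{\ge N}$ bounds directly.
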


 We define the set $\Lambda_0$:
$$\Lambda_0:=\left\{(q,r)\ \text{is}\ L^2\text{-admissible};2\leq r\leq 6\right\}.$$
Next, we define the following Strichartz norm
$$\|u\|_{S(L^2,I)}=\sup_{(q,r)\in \Lambda_0}\|u\|_{L_t^qL_x^r(I)}$$
and dual Strichartz norm
$$\|u\|_{N(I)}=\inf_{(q,r)\in \Lambda_{0}}\|u\|_{L_t^{q^{'}}L_x^{r^{'}}(I)}.$$
 If $I=\R$, $I$ is omitted usually.

 Now, let us recall some results about Strichartz estimates and Hardy-Littlewood-Sobolev's inequality, see \cite{Ca}, \cite{KT}, \cite{Li}.
\begin{lemma}\label{Strichartz}
  Let $0\in I$, the following statement hold

  (i)(linear estimate)
  $$\|e^{it\Delta}f\|_{S(L^2)}\leq C\|f\|_{L^2};$$

   (ii)(inhomogeneous estimate)
  $$\left\|\int_{0}^{t}e^{i(t-s)\Delta}g(\cdot,s)ds\right\|_{S(L^2,I)}\leq C\|g\|_{N(I)}.$$

\end{lemma}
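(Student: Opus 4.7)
The plan is to prove both bounds by the classical strategy: combine the dispersive estimate with the $L^{2}$-isometry property of $e^{it\Delta}$, then run the $TT^{*}$ argument together with Hardy--Littlewood--Sobolev in time. First I would record the two ingredients satisfied by the free Schr\"odinger propagator on $\R^{3}$. From Plancherel one has the $L^{2}$ conservation $\|e^{it\Delta}f\|_{L^{2}_{x}}=\|f\|_{L^{2}_{x}}$, since $e^{it\Delta}$ is a Fourier multiplier of unit modulus. From the explicit kernel $(4\pi it)^{-3/2}e^{i|x-y|^{2}/(4t)}$ one obtains the dispersive decay $\|e^{it\Delta}f\|_{L^{\infty}_{x}}\lesssim |t|^{-3/2}\|f\|_{L^{1}_{x}}$. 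Riesz--Thorin interpolation between these two endpoints yields
\begin{equation*}
\|e^{it\Delta}f\|_{L^{r}_{x}}\lesssim |t|^{-3(\frac{1}{2}-\frac{1}{r})}\|f\|_{L^{r'}_{x}},\qquad 2\leq r\leq\infty.
\end{equation*}

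For (i), I would use the $TT^{*}$ identity. Writing $Tf:=e^{it\Delta}f$, the estimate $\|Tf\|_{L^{q}_{t}L^{r}_{x}}\lesssim\|f\|_{L^{2}_{x}}$ is equivalent by duality to $\|T^{*}F\|_{L^{2}_{x}}\lesssim\|F\|_{L^{q'}_{t}L^{r'}_{x}}$, and hence to $\|TT^{*}F\|_{L^{q}_{t}L^{r}_{x}}\lesssim\|F\|_{L^{q'}_{t}L^{r'}_{x}}$, where $(TT^{*}F)(t)=\int_{\R}e^{i(t-s)\Delta}F(s)\,ds$. Inserting the dispersive bound inside and applying Hardy--Littlewood--Sobolev in the time variable gives the claim whenever $\frac{2}{q}+\frac{3}{r}=\frac{3}{2}$ with $r$ in the non-endpoint admissible range, and then taking the supremum over $(q,r)\in\Lambda_{0}$ produces the $S(L^{2})$ bound.

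For (ii), the same $TT^{*}$ calculation between two (possibly different) admissible pairs yields $\bigl\|\int_{\R}e^{i(t-s)\Delta}g(s)\,ds\bigr\|_{L^{q}_{t}L^{r}_{x}}\lesssim\|g\|_{L^{\tilde q'}_{t}L^{\tilde r'}_{x}}$ for $(q,r),(\tilde q,\tilde r)\in\Lambda_{0}$. To replace the full-line integral by the retarded Duhamel integral $\int_{0}^{t}$, I would apply the Christ--Kiselev lemma, which is available away from the endpoint since $q>\tilde q'$ for non-endpoint admissible pairs. Taking the supremum over $(q,r)$ and the infimum over $(\tilde q,\tilde r)$ in the resulting bound gives the stated estimate $\|\cdot\|_{S(L^{2},I)}\lesssim\|\cdot\|_{N(I)}$.

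The main obstacle is the inclusion of the endpoint pair $(q,r)=(2,6)$ in $\Lambda_{0}$, which is not accessible by the naive HLS / Christ--Kiselev argument above because HLS fails at the endpoint and $q=\tilde q'$ there. Here I would invoke the Keel--Tao bilinear interpolation scheme: decompose $TT^{*}$ dyadically in time, bound each piece by interpolating between the $L^{2}\to L^{2}$ and dispersive bounds, and sum the pieces via an atomic/Lorentz space argument. This yields the endpoint homogeneous and inhomogeneous estimates in dimension $N=3$ and closes the proof for all pairs in $\Lambda_{0}$; the references \cite{Ca,KT,Li} cited in the statement handle exactly this step.
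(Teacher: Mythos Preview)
Your proof sketch is correct and follows the standard route to Strichartz estimates (dispersive estimate plus $L^2$ isometry, $TT^*$, Hardy--Littlewood--Sobolev in time, Christ--Kiselev for the retarded integral, and Keel--Tao for the endpoint). The paper does not actually supply a proof of this lemma: it simply recalls the statement and refers the reader to \cite{Ca}, \cite{KT}, \cite{Li}, which contain precisely the arguments you outline.
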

\begin{lemma}\label{HLS}
 Let $0<\alpha<N$ and $1<r,s<\infty$ and $(f,g)\in L^p \times L^q$.
\begin{itemize}
\item [(i)] If $\frac1p+\frac1q=1+\frac{\lambda}N$, then
  $$\int_{\R^N}(I_\alpha \ast f)(x) g(x)dx\lesssim \|f\|_{L^p}\|g\|_{L^q}.$$ 
\item [(ii)] If $\frac1p+\frac1q+\frac1r=1+\frac{\lambda}N$, then
  $$
  \|(I_\alpha \ast f) g\|_{L^{r'}}\lesssim \|f\|_{L^p}\|g\|_{L^q}.
  $$ 

\end{itemize}
 \end{lemma}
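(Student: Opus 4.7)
The plan is to derive both parts from the classical Hardy--Littlewood--Sobolev inequality on $\R^N$, which asserts that whenever $1<p,q<\infty$ satisfy $\frac{1}{p}+\frac{1}{q}+\frac{N-\alpha}{N}=2$, one has
$$\left|\int_{\R^N}\!\!\int_{\R^N}\frac{f(x)\,g(y)}{|x-y|^{N-\alpha}}\,dx\,dy\right|\lesssim \|f\|_{L^p}\|g\|_{L^q}.$$
This is the standard reference statement (see e.g.\ Stein's \emph{Singular Integrals}) and I would simply quote it.

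For part (i), I would observe that $I_\alpha(x)=\mathcal{K}|x|^{-(N-\alpha)}$ by definition, so that
$$\int_{\R^N}(I_\alpha\ast f)(x)\,g(x)\,dx=\mathcal{K}\int_{\R^N}\!\!\int_{\R^N}\frac{f(y)\,g(x)}{|x-y|^{N-\alpha}}\,dy\,dx.$$
The hypothesis $\frac{1}{p}+\frac{1}{q}=1+\frac{\alpha}{N}$ is exactly the homogeneity condition $\frac{1}{p}+\frac{1}{q}+\frac{N-\alpha}{N}=2$ required by the classical HLS, so the bound follows after absorbing the constant $\mathcal{K}$ into the implicit constant.

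For part (ii), I would argue by duality. Testing $(I_\alpha\ast f)g$ against an arbitrary $h\in L^r$, the claim reduces to the trilinear estimate
$$\left|\int_{\R^N}(I_\alpha\ast f)(x)\,g(x)\,h(x)\,dx\right|\lesssim \|f\|_{L^p}\|g\|_{L^q}\|h\|_{L^r}$$
under $\frac{1}{p}+\frac{1}{q}+\frac{1}{r}=1+\frac{\alpha}{N}$. Setting $\widetilde{h}:=gh$ and applying H\"older with $\frac{1}{s}=\frac{1}{q}+\frac{1}{r}$ gives $\|\widetilde{h}\|_{L^s}\leq \|g\|_{L^q}\|h\|_{L^r}$. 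Since $\frac{1}{p}+\frac{1}{s}=1+\frac{\alpha}{N}$, part (i) applied to the pair $(f,\widetilde{h})$ closes the argument after one takes the supremum over $h$ in the unit ball of $L^r$.

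There is essentially no hard step here; the only thing to verify is that the auxiliary exponent $s$ lies in the admissible range $(1,\infty)$, which is forced by $1<q,r<\infty$ together with $\frac{1}{p}+\frac{1}{q}+\frac{1}{r}=1+\frac{\alpha}{N}<2$ and $0<\alpha<N$. So the real content is bookkeeping: translating the two hypotheses of the lemma into the single homogeneity condition needed to invoke the classical HLS, and then peeling off one factor with H\"older to pass from (i) to (ii).
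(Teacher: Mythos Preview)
The paper does not actually prove this lemma; it is stated as a recalled result with citations to \cite{Ca}, \cite{KT}, \cite{Li}, so there is no in-paper argument to compare against. Your derivation from the classical Hardy--Littlewood--Sobolev inequality, together with duality and H\"older to pass from (i) to (ii), is the standard route and is correct. One small quibble: your claim that $s\in(1,\infty)$ is ``forced'' by $1<q,r<\infty$ and the scaling relation is not quite right as written---what you really need is $\tfrac{1}{q}+\tfrac{1}{r}<1$, which is equivalent to $\tfrac{1}{p}>\tfrac{\alpha}{N}$; this is precisely the range in which the Riesz potential $I_\alpha\ast$ maps $L^p$ boundedly into some Lebesgue space, so it is an implicit hypothesis of the lemma (and the paper's statement, with its evident typos in the exponents, should be read that way).
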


We end this section with the local well-posedness and stability results, which be very important in our work. For a slab $I\subset\R$, we define the mixed spaces
$$S^1(I):=L_t^{q_0}(I,L_x^{r_1}):=L^{2p}(I,L^{\frac{6p}{p-2}})\ \ 
\quad \textnormal{and}\quad W(I):=L^{q_0}(I,L^{r_0}):=L^{2p}(I,L^{\frac{6p}{3p-2}}),$$
so,
$$\|u\|_{S^1(I)}\leq C\|\nabla u\|_{W(I)}.$$
Let $S(I)$ denote by
$$\|u\|_{S(I)}:=\|u\|_{S^1(I)}+\|\nabla u\|_{W(I)}.$$

Before starting the local theory and the stability results, we show the estimate of the nonlinear term, which plays an important role in proving the mentioned results as well as the scattering theory.
\begin{lemma}[Nonlinear estimate]\label{Non-e1}
  Let $0<b\leq\frac{(1+\alpha)}{3}$. Then, we have
  $$\|\nabla[(I_{\alpha}\ast|\cdot|^{-b}|u|^p)|x|^{-b}|u|^{p-2}u]\|_{N(I)}\lesssim \|u\|_{S^1(I)}^{2(p-1)-2b}\|\nabla u\|_{W(I)}^{1+2b}.$$
\end{lemma}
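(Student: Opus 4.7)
The strategy is to expand $\nabla F(u)$, where $F(u)=|x|^{-b}(I_\alpha\ast|\cdot|^{-b}|u|^p)|u|^{p-2}u$, via the product and chain rules, and then estimate each resulting piece by a combination of H\"older, the Hardy--Littlewood--Sobolev inequality (Lemma \ref{HLS}), Sobolev embedding, and a Hardy--Sobolev-type inequality to absorb the singular weights $|x|^{-b}$.

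First, the product/chain rule yields four schematic types of terms: (i) those where $\nabla$ lands on the outer $|u|^{p-2}u$, producing $|x|^{-2b}(I_\alpha\ast|u|^p)|u|^{p-2}\nabla u$; (ii) those where $\nabla$ lands on the $|u|^p$ under the convolution, giving $|x|^{-2b}(I_\alpha\ast|u|^{p-1}\nabla u)|u|^{p-2}u$; (iii)--(iv) those where $\nabla$ hits one of the two weights $|x|^{-b}$, producing the more singular $|x|^{-b-1}$. For the last two, the Hardy inequality $\||x|^{-1}v\|_{L^q}\lesssim\|\nabla v\|_{L^q}$ (valid for $1<q<3$) is used to trade one $|x|^{-1}$ for a $\nabla u$ factor, reducing (iii)--(iv) to the same schematic form as (i)--(ii): a Riesz convolution factor multiplied by a pointwise factor that contains exactly one $\nabla u$, $2p-2$ factors of $u$, and the residual weight $|x|^{-2b}$.

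For each such term I would then evaluate the $N(I)$ norm in the dual of the $L^2$-admissible pair $\bigl(2p,\tfrac{6p}{3p-2}\bigr)$. Splitting by H\"older in time and space, one places the single $\nabla u$-factor in the $W$ space $L^{2p}_tL^{6p/(3p-2)}_x$, and distributes the remaining $u$-factors in $L^{2p}_tL^{6p/(p-2)}_x=S^1$. The Riesz convolution is absorbed by Lemma \ref{HLS}(ii) with gauge $\tfrac{1}{p_1}+\tfrac{1}{p_2}+\tfrac{1}{r}=1+\tfrac{\alpha}{3}$, and the weights $|x|^{-b}$ are redistributed among the $u$-factors via the Hardy--Sobolev inequality $\||x|^{-s}u\|_{L^{r_s}}\lesssim \|\nabla u\|_{L^{r_0}}$, with $\tfrac{1}{r_s}=\tfrac{1}{r_0}-\tfrac{1-s}{3}$ and $0\le s\le 1$. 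Choosing the fractional parameters so that $2b$ units of weight are converted into $2b$ factors of $\nabla u$ in $W$, while the rest remain as $u$-factors in $S^1$, yields the claimed exponents $\|u\|_{S^1}^{2(p-1)-2b}\|\nabla u\|_W^{1+2b}$.

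The principal obstacle is the exponent bookkeeping: each invocation of HLS, Hardy, Sobolev, or Hardy--Sobolev prescribes a specific admissible range of Lebesgue exponents, and one must verify that the hypothesis $0<b\le \tfrac{1+\alpha}{3}$ is exactly what makes all these ranges compatible. In particular, the upper bound on $b$ ensures strict positivity of all Lebesgue exponents arising from the HLS gauge and keeps the Hardy-type exponents strictly below $3$, so the full chain of estimates closes and produces the announced bound.
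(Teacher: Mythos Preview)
Your strategy matches the paper's: expand $\nabla F(u)$ into four pieces by the product rule, estimate in the dual pair $L^{q_0'}_tL^{r_0'}_x$, handle the convolution by Lemma~\ref{HLS}, and use Hardy's inequality $\||x|^{-1}u\|_{L^{r_0}}\lesssim\|\nabla u\|_{L^{r_0}}$ to trade the singular weights for $\nabla u$ factors; the paper does this concretely by writing $|x|^{-b}|u|^p=(|x|^{-1}|u|)^b|u|^{p-b}$ rather than invoking a fractional Hardy--Sobolev inequality, but the effect is identical.

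One correction on the bookkeeping: your explanation of the constraint $b\le\tfrac{1+\alpha}{3}$ is not where it actually bites. The Hardy range $r_0=\tfrac{6p}{3p-2}<3$ only needs $p>2$, i.e.\ $b<\tfrac{1+\alpha}{2}$, and the HLS exponents themselves impose nothing sharper. The real restriction appears when you split the \emph{outer} factor $|x|^{-b}|u|^{p-2}\nabla u$ as $(|x|^{-1}u)^b|u|^{p-b-2}\nabla u$: you need $p-b-2\ge 0$, and since $p=3+\alpha-2b$ this is exactly $b\le\tfrac{1+\alpha}{3}$. So the constraint is a counting issue (enough $u$-factors outside the convolution to absorb the weight after one slot is taken by $\nabla u$), not an exponent-range issue.
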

\begin{proof}
By the definition of $N(I)$,
 \begin{align*}
   \|\nabla[(I_{\alpha}\ast|\cdot|^{-b}|u|^p)|x|^{-b}|u|^{p-2}u]\|_{N(I)}\leq \|\nabla[(I_{\alpha}\ast|\cdot|^{-b}|u|^p)|x|^{-b}|u|^{p-2}u]\|_{L_t^{q_0'}L_x^{r_0'}}.
 \end{align*}
 Note that
\begin{align}\label{Non-1}
|\nabla[(I_{\alpha}\ast|\cdot|^{-b}|u|^p)|x|^{-b}|u|^{p-2}u]|\lesssim& (I_{\alpha}\ast|\cdot|^{-b}|u|^p)|x|^{-b}|u|^{p-2}|\nabla u|\\\label{Non-2}
&+(I_{\alpha}\ast|\cdot|^{-b}|u|^p)|x|^{-b-1}|u|^{p-1}\\
&+(I_{\alpha}\ast|\cdot|^{-b}|u|^{p-1}|\nabla u|)|x|^{-b}|u|^{p-1}\\
&+(I_{\alpha}\ast|\cdot|^{-b-1}|u|^p)|x|^{-b}|u|^{p-1}.
\end{align}
We estimate the first two because the other terms are similar. So, applying Hardy--Littlewood--Sobolev's inequality (Lemma \ref{HLS}) together with H\"older's inequality and the relation
  $$1+\frac{\alpha}{3}=\frac{1}{r_0}+\frac{2(p-1)-2b}{r_1}+\frac{1+2b}{r_0},$$
  it follows that
   \begin{align*}
    \|\eqref{Non-1}\|_{L_t^{q_0'}L_x^{r_0'}}\lesssim&
    \|(I_{\alpha}\ast(|\cdot|^{-1}|u|)^{b}|u|^{p-b})(|x|^{-1}u)^{b}|u|^{p-b-2}\nabla u\|_{L_t^{q_0'}L_x^{r_0'}}\\
    \lesssim&\||x|^{-1}u\|_{L_t^{q_0}L_x^{r_0}}^{b}\|u\|_{L_t^{q_0}L_x^{r_1}}^{p-b}\||x|^{-1}u\|_{L_t^{q_0}L_x^{r_0}}^{b}
    \|u\|_{L_t^{q_0}L_x^{r_1}}^{p-b-2}\|\nabla u\|_{L_t^{q_0}L_x^{r_0}}\\
    \lesssim&\||x|^{-1}u\|_{L_t^{q_0}L_x^{r_0}}^{2b}\|u\|_{L_t^{q_0}L_x^{r_1}}^{2(p-1)-2b}\|\nabla u\|_{L_t^{q_0}L_x^{r_0}},
  \end{align*}
  and
  \begin{align*}
    \|\eqref{Non-2}\|_{L_t^{q_0'}L_x^{r_0'}}\lesssim&
    \|(I_{\alpha}\ast(|\cdot|^{-1}|u|)^{b}|u|^{p-b})(|x|^{-1}u)^{b+1}|u|^{p-b-2}\|_{L_t^{q_0'}L_x^{r_0'}}\\
    \lesssim&\||x|^{-1}u\|_{L_t^{q_0}L_x^{r_0}}^{b}\|u\|_{L_t^{q_0}L_x^{r_1}}^{p-b}\||x|^{-1}u\|_{L_t^{q_0}L_x^{r_0}}^{b+1}
    \|u\|_{L_t^{q_0}L_x^{r_1}}^{p-b-2}\\
    \lesssim&\||x|^{-1}u\|_{L_t^{q_0}L_x^{r_0}}^{2b+1}\|u\|_{L_t^{q_0}L_x^{r_1}}^{2(p-1)-2b},
  \end{align*}
  where we need $p-b-2\geq0$ which implies $b\leq \frac{1+\alpha}3$.

On the other hand, by the Hardy inequality, that is,
$$\||x|^{-1}u\|_{L^r}\lesssim \|\nabla u\|_{L^r}\quad 1<r<3,$$
we get (using the fact $p>2$)
$$\|\eqref{Non-1}\|_{L_t^{q_0'}L_x^{r_0'}}+\|\eqref{Non-2}\|_{L_t^{q_0'}L_x^{r_0'}}\lesssim \|u\|_{S^1(I)}^{2(p-1)-2b}\|\nabla u\|_{W(I)}^{1+2b}.$$
Thus, we complete the proof.
\end{proof}

The next two propositions show results for the Cauchy problem \eqref{INLC}. We begin with the local well-posedness in $\dot{H}^1(\R^3)$.
\begin{proposition}[Local well-posedness]\label{LWP}
  For any $u_0\in \dot{H}^1(\R^3)$  and $0<b\leq \min\{\frac{1+\alpha}{3},\frac{\alpha}2\}$, there exists $T=T(u_0)$ and a unique solution $u$ with initial data $u_0$ satisfying
  $$u\in L_{loc}^q((-T,T), \dot H^{1,r})\ \ \forall (q,r)\in \Lambda_0.$$
  In particular, there exists $\eta_0>0$ so that if
  \begin{align*}
    \|e^{it\Delta}u_0\|_{S^1([0,\infty))}<\eta
  \end{align*}
  for any $\eta<\eta_0$, the solution to \eqref{INLC} is forward global and obeys
  $$\|u\|_{S^1([0,\infty)}\lesssim \eta.$$
  The analogous statement holds backward in time or on all of $\R$.
\end{proposition}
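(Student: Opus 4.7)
The plan is to set this up as a standard contraction-mapping argument on a small ball in a critical Strichartz-type space, where the heart of the matter is the nonlinear estimate of Lemma \ref{Non-e1}.

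First I would fix a time interval $I\ni 0$ and define the Duhamel map
\[
\Phi(u)(t) := e^{it\Delta}u_0 + i\int_0^t e^{i(t-s)\Delta}\bigl[|x|^{-b}(I_\alpha\ast|\cdot|^{-b}|u|^p)|u|^{p-2}u\bigr](s)\,ds,
\]
and look for a fixed point in the complete metric space
\[
X_{a,M}(I):=\bigl\{u : \|u\|_{S^1(I)}\le a,\ \|\nabla u\|_{W(I)}\le M\bigr\},
\]
equipped with the distance $d(u,v):=\|u-v\|_{S^1(I)}+\|\nabla(u-v)\|_{W(I)}$. Using Lemma \ref{Strichartz}(i)--(ii) on $\Phi(u)$ and on $\nabla\Phi(u)$ (noting that $\nabla$ and Littlewood--Paley commute with $e^{it\Delta}$), together with Lemma \ref{Non-e1}, I would obtain
\[
\|\Phi(u)\|_{S^1(I)}\le \|e^{it\Delta}u_0\|_{S^1(I)} + C\|u\|_{S^1(I)}^{2(p-1)-2b}\|\nabla u\|_{W(I)}^{1+2b},
\]
\[
\|\nabla\Phi(u)\|_{W(I)}\le C\|u_0\|_{\dot{H}^1} + C\|u\|_{S^1(I)}^{2(p-1)-2b}\|\nabla u\|_{W(I)}^{1+2b}.
\]
Setting $M:=2C\|u_0\|_{\dot H^1}$ and $a:=2\|e^{it\Delta}u_0\|_{S^1(I)}$, the self-map property $\Phi:X_{a,M}\to X_{a,M}$ reduces to the smallness requirement $Ca^{2(p-1)-2b}M^{2b}\ll 1$, which is satisfied provided $\|e^{it\Delta}u_0\|_{S^1(I)}$ is sufficiently small (this is why the constraint $b\le\frac{1+\alpha}{3}$ is needed, as it is built into Lemma \ref{Non-e1}).

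Second, to run the contraction I would derive a difference version of Lemma \ref{Non-e1}: writing
$F(u):=(I_\alpha\ast|\cdot|^{-b}|u|^p)|x|^{-b}|u|^{p-2}u$ and using the pointwise identity together with Hardy--Littlewood--Sobolev, H\"older, and the Hardy inequality $\||x|^{-1}w\|_{L^{r_0}}\lesssim\|\nabla w\|_{L^{r_0}}$ exactly as in the proof of Lemma \ref{Non-e1}, one gets
\[
\|\nabla F(u)-\nabla F(v)\|_{N(I)}\lesssim \bigl(\|u\|_{S(I)}+\|v\|_{S(I)}\bigr)^{2p-2}\,d(u,v).
\]
Combined with the Strichartz estimate, this yields $d(\Phi(u),\Phi(v))\le\tfrac12 d(u,v)$ on $X_{a,M}$ once $a$ is small, producing the unique fixed point. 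Uniqueness in the larger class then follows from a standard persistence-of-regularity argument on any subinterval.

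Third, for the unconditional local result with arbitrary $u_0\in\dot H^1$, I would exploit that $e^{it\Delta}u_0\in S^1(\R)$ by Strichartz together with a density argument (approximating $u_0$ by $\dot H^1\cap L^2$ data and using monotone/dominated convergence for $L^{2p}_tL^{6p/(p-2)}_x$): this shows that $I\mapsto\|e^{it\Delta}u_0\|_{S^1(I)}$ is absolutely continuous in the endpoints of $I$, so one may take $T=T(u_0)>0$ small enough that $\|e^{it\Delta}u_0\|_{S^1([-T,T])}<\eta_0$, and the contraction argument above applies on $(-T,T)$. For the second, global-in-time assertion, the hypothesis $\|e^{it\Delta}u_0\|_{S^1([0,\infty))}<\eta$ lets us apply the same argument directly on $[0,\infty)$.

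The main obstacle in executing this plan is the nonlinear estimate; but Lemma \ref{Non-e1} has already resolved it via Hardy--Littlewood--Sobolev combined with the Hardy inequality, at the price of the condition $b\le\frac{1+\alpha}{3}$. The additional constraint $b\le\alpha/2$ is needed to guarantee $p=3+\alpha-2b\ge 2$, so that $|u|^{p-2}u$ is well-defined as a $C^1$ nonlinearity and the difference estimate above is justified. With these two ingredients the argument is otherwise routine Strichartz fixed-point analysis.
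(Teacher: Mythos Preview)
Your approach is essentially the same as the paper's: a contraction-mapping argument on a ball defined by bounds on $\|u\|_{S^1(I)}$ and $\|\nabla u\|_{W(I)}$, with the self-map step driven by Lemma~\ref{Non-e1} and the same choice $M=2C\|u_0\|_{\dot H^1}$, $a=2\|e^{it\Delta}u_0\|_{S^1(I)}$.

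One point deserves correction. You write that $b\le\alpha/2$ is needed to ensure $p=3+\alpha-2b\ge 2$; in fact $b\le\alpha/2$ is equivalent to $p\ge 3$, and it is this stronger bound that the contraction step actually requires. When you expand $\nabla[F(u)-F(v)]$, one of the terms contains the factor $|u|^{p-2}-|v|^{p-2}$, and to estimate it via
\[
\bigl||u|^{p-2}-|v|^{p-2}\bigr|\lesssim\bigl(|u|^{p-3}+|v|^{p-3}\bigr)|u-v|
\]
you need $p-2\ge 1$. This is exactly how the paper proceeds, and it is the true reason the hypothesis $b\le\alpha/2$ enters the well-posedness theory (the condition $p\ge 2$ alone would only give $b\le(1+\alpha)/2$, which is weaker). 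Your sketched difference estimate $\|\nabla F(u)-\nabla F(v)\|_{N(I)}\lesssim(\|u\|_{S(I)}+\|v\|_{S(I)})^{2p-2}d(u,v)$ is therefore correct in spirit, but its justification hinges on $p\ge 3$ rather than $p\ge 2$.
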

\begin{proof}
  To do that we use the contraction mapping argument. Fix $\rho, r>0$, to be chosen later, we define a complete metric space $(B_{\rho,r}, d)$ as following:
  $$
B_{\rho,r}=\{u\in C(I;\dot{H}^1(\mathbb{R}^3))\;:\; 
 \|u\|_{S^1(I)}\leq \rho \quad \textnormal{and} \quad \|\nabla u\|_{W(I)}\leq r\},
$$
  $$d(u,v):=\|\nabla (u-v)\|_{W(I)}+\| u-v\|_{S^1(I)}.$$
Let $\Gamma (u)$ denoted by
\begin{equation}\label{OPERATOR} 
\Gamma(u)(t)=e^{it\Delta}u_0+i\int_0^t e^{i(t-s)\Delta}|x|^{-b}(I_\alpha\ast|\cdot|^{-b}|u(s)|^{p})|u(s)|^{p-2}u(s)ds.
\end{equation}
We will next prove $\Gamma$ is a contraction on $(B_{\rho,r}, d)$.
 First, by the Strichartz estimates and Lemma \ref{Non-e1}, one can obtain that
\begin{align*}
  \|\nabla\Gamma(u)\|_{W(I)}\leq& C\|u_0\|_{\dot H^1}+C\|u\|_{S^1(I)}^{2(p-1)-2b}\|\nabla u\|_{W(I)}^{1+2b}\\
  \leq&C(\|u_0\|_{\dot H^1}+\rho^{2(p-1)-2b}r^{1+2b})
\end{align*}
and
\begin{align*}
  \|\Gamma(u)\|_{S^1(I)}\leq& \|e^{it\Delta}u_0\|_{S^1(I)}+C\|u\|_{S^1(I)}^{2(p-1)-2b}\|\nabla u\|_{W(I)}^{1+2b}\\
  \leq&\|e^{it\Delta}u_0\|_{S^1(I)}+C\rho^{2(p-1)-2b}r^{1+2b}.
\end{align*}
Choosing $r= 2C\|u_0\|_{\dot H^1}$ and $\rho$ so that $C\rho^{2(p-1)-2b}r^{2b}<\frac{1}{4}$ one has $\|\nabla \Gamma(u)\|_{S(L^2;I)}\leq r$.
Let $\delta=\|e^{it\Delta}u_0\|_{S^1(I)}$, we can choose time interval  $0\in I$ such that $\delta=\frac{\rho}{2}$ and $ C\rho^{2(p-1)-2b-1}r^{1+2b}<\frac14$, then $\|\Gamma(u)\|_{S^1(I)}\leq \rho$ which means $\Gamma(u)\in B_{\rho,r}$.

To complete the proof we show that $\Gamma$ is a contraction on $B_{\rho,r}$. Using Strichartz estimate, we have
\begin{equation}\label{Contrction1}
d(\Gamma(u),\Gamma(v))\leq 2C\|\nabla \left(F(x,u)- F(x,v)\right)\|_{N(I)},
\end{equation}
where $F(u)=(I_\alpha\ast|\cdot|^{-b}|u|^p)|x|^{-b}|u|^{p-2}u$. A direct calculation shows that
 \begin{align*}
  &|\nabla(F(u)-F(v)|\\
  \lesssim& \left|(I_{\alpha}\ast|\cdot|^{-b}|u|^p)|x|^{-b}|u|^{p-2}\nabla u-(I_{\alpha}\ast|\cdot|^{-b}|v|^p)|x|^{-b}|v|^{p-2}\nabla v\right|\\
  &+\left|(I_{\alpha}\ast|\cdot|^{-b}|u|^p)|x|^{-b-1}|u|^{p-2}u- (I_{\alpha}\ast|\cdot|^{-b}|v|^p)|x|^{-b-1}|v|^{p-2}v\right|\\
  &+\big|(I_{\alpha}\ast|\cdot|^{-b}|u|^{p-2}\Re\overline{u}\nabla u)|x|^{-b}|u|^{p-2}u-(I_{\alpha}\ast|\cdot|^{-b}|v|^{p-2}\Re \overline{v} \nabla v)|x|^{-b}|v|^{p-2}v\big|\\
  &+\left|(I_{\alpha}\ast|\cdot|^{-b-1}|u|^p)|x|^{-b}|u|^{p-2}u-(I_{\alpha}\ast|\cdot|^{-b-1}|v|^p)|x|^{-b}|v|^{p-2}v\right|\\
  =&:I+II+III+IV.
 \end{align*}
 We begin with estimating the term $I$ which can be written
  \begin{align*}
    |I|\leq& C\left|(I_{\alpha}\ast|\cdot|^{-b}(|u|^p-|v|^p)\right||x|^{-b}|v|^{p-2}|\nabla v|\\
    &+(I_{\alpha}\ast|\cdot|^{-b}|u|^p)|x|^{-b}\left|(|u|^{p-2}-|v|^{p-2})\right||\nabla v|\\
    &+(I_{\alpha}\ast|\cdot|^{-b}|u|^p)|x|^{-b}|u|^{p-2}|\nabla(u-v)|.
  \end{align*}
So, applying the inequality (using the fact $\sigma\geq 1$):
$$
||f|^{\sigma}-|g|^{\sigma}|\leq C(|f|^{\sigma-1}+|g|^{\sigma-1})|f-g|,
$$
one has (using $\sigma =p-2\geq 1$, that is $p\geq 3$, which is equivalent to $b\leq \frac{\alpha}{2}$)
\begin{align*}
|I|\leq& C\left|(I_{\alpha}\ast|\cdot|^{-b}(|u|^{p-1}+|v|^{p-1})|u-v|\right||x|^{-b}|v|^{p-2}|\nabla v|\\
&+(I_{\alpha}\ast|\cdot|^{-b}|u|^p)|x|^{-b}\left|(|u|^{p-3}+|v|^{p-3})(u-v)\right||\nabla v|\\
&+(I_{\alpha}\ast|\cdot|^{-b}|u|^p)|x|^{-b}|u|^{p-2}|\nabla(u-v)|.
  \end{align*}
The Hardy--Littlewood--Sobolev inequality (Lemma \ref{HLS}) and Lemma \ref{Non-e1} imply
\begin{align*}
  \|I\|_{L_t^{q_0'}L_x^{r_0'}}
\leq& C\Big((\|\nabla u\|_{W(I)}^b+\|\nabla v\|_{W(I)}^b)(\|u\|_{S^1(I)}^{p-1-b}+\|v\|_{S^1(I)}^{p-1-b})\|v\|_{S^1(I)}^{p-2-b}\|\nabla v\|_{W(I)}^{1+b}\|u-v\|_{S^1(I)}\\
  &+\|u\|_{S^1(I)}^{p-b}\|\nabla u\|_{W(I)}^b\|\nabla v\|_{W(I)}\\
  &\ \ \ \ \times
  \begin{cases}
    (\|u\|_{S^1(I)}^{p-3}+\|v\|_{S^1(I)}^{p-3})\|\nabla(u-v)\|_{W(I)}^b\|u-v\|_{S^1(I)}^{1-b},\ \text{ if $b<1;$}\\
    (\|u\|_{S^1(I)}^{p-2-b}\|\nabla u\|_{W(I)}^{b-1}+\|v\|_{S^1(I)}^{p-2-b}\|\nabla v\|_{W(I)}^{b-1})\|\nabla(u-v)\|_{W(I)},\ \text{ if $b\geq1;$}
  \end{cases}\\
  &+\|u\|_{S^1(I)}^{2(p-1)-2b}\|\nabla u\|_{W(I)}^{2b}\|\nabla(u-v)\|_{W(I)}\Big).
\end{align*}
Therefore, if $u, v \in B_{\rho,r}$, then
\begin{align*}
  \|I\|_{L_t^{q_0'}L_x^{r_0'}}\leq C\Big(\rho^{2p-3-2b}r^{1+2b}+\max\{\rho^{2p-3-b}r^{1+b},\rho^{2p-2-2b}r^{2b}\}+\rho^{2p-2-2b}r^{2b}\Big)d(u,v).
\end{align*}
We can choose $\rho$ small enough so that
$$C\Big(\rho^{2p-3-2b}r^{1+2b}+\max\{\rho^{2p-3-b}r^{1+b},\rho^{2p-2-2b}r^{2b}\}+\rho^{2p-2-2b}r^{2b}\Big)\leq \frac18.$$
Thus the other term can be estimated similarly. Hence, we can get
$$d(\Gamma(u),\Gamma(v))\leq \frac12 d(u,v)$$
which means that  $\Gamma$ is a contraction. So, by the contraction mapping principle, $\Gamma$ has a unique fixed point $u\in B_{\rho,r}$.
\end{proof}

Now, we study the stability theory for \eqref{INLC}.
\begin{proposition}[Stability]\label{Stability}
  Let $0<b\leq \min\{\frac{1+\alpha}3,\frac{\alpha}2\}$. Suppose $\tilde{u}:I\times\R^3\rightarrow\mathbb{C}$ obeys
  $$\|\tilde{u}\|_{L_t^\infty\dot{H}_x^1}+\|\tilde{u}\|_{S^1(I)}\leq E<\infty.$$
  Then there exists $\epsilon_1=\epsilon_1(E)>0$ such that if
  \begin{align*}
    \|\nabla&\{(i\partial_t+\Delta)\tilde{u}+|x|^{-b}(I_{\alpha}\ast|\cdot|^{-b}|\tilde{u}|^p)|\tilde{u}|^{p-2}\tilde{u}\}
    \|_{N(I)}\leq \epsilon<\epsilon_1,\\
    &\|e^{i(t-t_0)\Delta}[u_0-\tilde{u}(t_0)]\|_{S^1(I)}\leq \epsilon<\epsilon_1,
  \end{align*}
  for some $t_0\in I$ and $u_0\in\dot{H}^1(\R^3)$, then there exists a unique solution $u:I\times\R^3\rightarrow\mathbb{C}$ with $u(t_0)=u_0$, which satisfies
  $$\|u-\tilde{u}\|_{S^1(I)}\lesssim \epsilon.$$
\end{proposition}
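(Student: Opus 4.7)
The plan is to reduce everything to a long-time perturbation argument built on top of the short-time nonlinear difference estimate, in the same spirit as the proof of Proposition \ref{LWP}. Set $w := u - \tilde{u}$, so that, writing $F(v) := |x|^{-b}(I_\alpha \ast |\cdot|^{-b}|v|^p)|v|^{p-2}v$ and $e := (i\partial_t + \Delta)\tilde{u} + F(\tilde{u})$, the difference $w$ solves
\begin{equation*}
i\pa_t w + \Delta w + \bigl[F(\tilde{u}+w) - F(\tilde{u})\bigr] = -e, \qquad w(t_0) = u_0 - \tilde{u}(t_0).
\end{equation*}
The Duhamel representation and the Strichartz estimate of Lemma \ref{Strichartz} give
\begin{equation*}
\|w\|_{S(J)} \lesssim \|e^{i(t-t_0)\Delta}w(t_0)\|_{S^1(J)} + \|\nabla e\|_{N(J)} + \|\nabla[F(\tilde{u}+w)-F(\tilde{u})]\|_{N(J)}
\end{equation*}
on any subinterval $J \subset I$ containing $t_0$.

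The first step is to control $\|\nabla \tilde u\|_{W(I)}$ in terms of $E$ and $\epsilon_1$. Using the equation satisfied by $\tilde u$ and one more application of Strichartz together with Lemma \ref{Non-e1}, one gets $\|\nabla \tilde u\|_{W(I)} \le C(E)$. The second step is to partition $I$ into finitely many consecutive subintervals $I_1, \ldots, I_J$, with $J = J(E,\eta)$, on each of which
\begin{equation*}
\|\tilde{u}\|_{S^1(I_j)} \le \eta \quad\textnormal{and}\quad \|\nabla \tilde{u}\|_{W(I_j)} \le \eta,
\end{equation*}
where $\eta = \eta(E)$ is a small parameter to be chosen. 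This is possible since the $S^1$ norm of $\tilde u$ and the $W$ norm of $\nabla \tilde u$ are finite.

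The main technical step is a nonlinear difference estimate analogous to the one that appears inside the proof of Proposition \ref{LWP}: arguing term by term as in the expansion $I+II+III+IV$ of that proof, and using the Hardy--Littlewood--Sobolev inequality (Lemma \ref{HLS}), Hardy's inequality, and the pointwise bound $||f|^\sigma - |g|^\sigma| \le C(|f|^{\sigma-1}+|g|^{\sigma-1})|f-g|$ (with $\sigma = p-2 \ge 1$, which is precisely where $b \le \alpha/2$ enters), one obtains
\begin{equation*}
\|\nabla[F(\tilde{u}+w)-F(\tilde{u})]\|_{N(J)} \lesssim \bigl(\|\tilde u\|_{S^1(J)} + \|w\|_{S^1(J)}\bigr)^{2p-3-2b} \cdot \bigl(\|\nabla \tilde u\|_{W(J)} + \|\nabla w\|_{W(J)}\bigr)^{1+2b} \cdot \|w\|_{S(J)},
\end{equation*}
or a polynomial combination of this shape (the precise form is dictated by the analogous calculation in Proposition \ref{LWP}).

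With this estimate in hand, the third step is a bootstrap/induction on $j$. On the first subinterval $I_1$, assuming a priori that $\|w\|_{S(I_1)} \le \eta$, the nonlinear difference is bounded by $C\eta^{2p-2} \|w\|_{S(I_1)}$, which for $\eta$ small can be absorbed on the left. This closes the bootstrap and yields
\begin{equation*}
\|w\|_{S(I_1)} \le C\bigl(\|e^{i(t-t_0)\Delta}w(t_0)\|_{S^1(I_1)} + \|\nabla e\|_{N(I_1)}\bigr) \le 2C\epsilon.
\end{equation*}
For the induction step, one treats $I_{j+1}$ with initial time $t_j$ (the right endpoint of $I_j$). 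The new linear evolution error is controlled by
\begin{equation*}
\|e^{i(t-t_j)\Delta}w(t_j)\|_{S^1(I_{j+1})} \le \|e^{i(t-t_0)\Delta}w(t_0)\|_{S^1(I_{j+1})} + C \|\nabla e\|_{N([t_0,t_j])} + C(\text{nonlinear contribution from }I_1,\ldots,I_j),
\end{equation*}
and the inductive hypothesis produces a bound of the form $\|w\|_{S(I_{j+1})} \le C_{j+1}\,\epsilon$ with $C_{j+1} \le 2C(1+C_j)$. Since $J = J(E)$ is finite, iterating yields $\|w\|_{S(I)} \lesssim_E \epsilon$ provided $\epsilon_1 = \epsilon_1(E)$ is chosen small enough that the bootstrap closes at every step.

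The main obstacle is the nonlinear difference estimate: the nonlocal convolution combined with the two inhomogeneous weights $|x|^{-b}$ forces a careful term-by-term decomposition exactly as in Proposition \ref{LWP}, and one must keep track of whether $b<1$ or $b\ge 1$ when estimating $||u|^{p-2} - |v|^{p-2}|$. Everything else is the standard long-time perturbation scheme, with uniqueness following from running the same argument with $\epsilon = 0$ applied to two candidate solutions.
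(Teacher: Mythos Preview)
Your proposal is correct and follows essentially the same approach as the paper: both define $w=u-\tilde u$, control $\|\nabla\tilde u\|_{W(I)}$ via Strichartz and Lemma~\ref{Non-e1}, partition $I$ into finitely many subintervals on which $\|\tilde u\|_{S^1(I_j)}$ is small, invoke the nonlinear difference estimate from the proof of Proposition~\ref{LWP}, and then induct on the subintervals to obtain an exponentially-growing (in $j$) but ultimately $E$-dependent bound $\|w\|_{S^1(I_j)}\lesssim 2^j\epsilon$. The only cosmetic difference is that you also arrange $\|\nabla\tilde u\|_{W(I_j)}\le\eta$ on each piece, whereas the paper uses only the global bound $\|\nabla\tilde u\|_{W(I)}\lesssim M$; this does not affect the argument.
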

\begin{proof}
Without loss of generality, we may assume that $I=[0,a)$. For any $\eta>0$ small enough, there exists $I_j=[a_j,a_{j+1})$ such that $I=\bigcup_{j=1}^{J}I_j$ ($J<\infty$) and $\|u\|_{S^1(I)}\leq \eta$. First note by using the Strichartz estimate, Lemma \ref{Non-e1} and a standard continuity argument we have $\|\nabla\widetilde{u}\|_{W(I)}\lesssim M.$

  Let us  define $u=\tilde{u}+w$, so that the equation for $w$ is written as
  \begin{align*}
    \begin{cases}
      iw_t+\Delta w=F(\tilde{u}+w)-F(\tilde{u})+e,\\
      w(0)=u_0-\tilde{u}(0),
    \end{cases}
  \end{align*}
  where $F(u)=-(I_\alpha\ast|\cdot|^{-b}|u|^p)|x|^{-b}|u|^{p-2}u$ and $e=(i\partial_t+\Delta)\tilde{u}+|x|^{-b}(I_{\alpha}\ast|\cdot|^{-b}|\tilde{u}|^p)|\tilde{u}|^{p-2}\tilde{u}$. Thus, using Duhamel on $I_j$, $w$ satisfies
  $$w(t)=e^{i(t-a_j)\Delta}w(a_j)-i\int_{a_j}^te^{i(t-s)\Delta}(F(\tilde{u}+w)-F(\tilde{u}))ds-
  i\int_{a_j}^te^{i(t-s)\Delta}eds.$$
  The proof of Proposition \ref{LWP} shows that shows that, for $\eta$ and $\epsilon_1$ small enough, one can obtain that there exits a unique solution $w(t)$ on $I_1$ such that $\|w\|_{S^1(I_1)}\leq 2\epsilon$ and $\|\nabla w\|_{W(I_1)}\leq C_1(E,\|u_0\|_{\dot H^1}).$ Together with Strichartz estimate, one can obtain $\|w\|_{L_{I_1}\dot H^1}\leq C_1'(E,\|u_0\|).$

  Now we turn to estimate $\|e^{i(t-a_2)\Delta}w(a_2)\|_{S^1(I_2)}$. In fact,
  \begin{align*}
    e^{i(t-a_2)\Delta}w(a_2)=e^{it(a_2-a_0)\Delta}-i\int_{a_1}^{a_2}e^{i(t-s)\Delta}(F(\tilde{u}+w)-F(\tilde{u}))ds-
  i\int_{a_1}^{a_2}e^{i(t-s)\Delta}eds.
  \end{align*}
  Thus using the Strichartz estimate, $\|w\|_{S^1(I_1)}\leq 2\epsilon$ and $\|\nabla w\|_{W(I_1)}\leq C(E,\|u_0\|_{\dot H^1})$, we estimate
  $$\|e^{i(t-a_2)\Delta}w(a_2)\|_{S^1(I_2)}\leq 2\epsilon .$$
  Reiterating above argument, one has
  $$\|w\|_{S^1(I_2)}\leq 4\epsilon\;\;\text{and}\;\; \|\nabla w\|_{W(I_2)}\leq C_2(E,\|u_0\|_{\dot H^1}).$$

 By an inductive argument, then we have, for $j=1,...,J$,
 $$\|w\|_{S^1(I_j)}\leq 2^j\epsilon\;\; \text{and}\;\; \|\nabla w\|_{W(I_2)}\leq C_j(E,\|u_0\|_{\dot H^1}).$$
 Therefore, summing them over all subintervals $I_j$, we get the desired.
\end{proof}

\section{Profile decomposition and variational analysis}

This section contains a linear profile decomposition result. It will be very important to show the new ingredient of this paper (Proposition \ref{N-profile}) and the existence of a critical solution. Furthermore, we study some properties that are related to our problem. First, the Gagliardo--Nirenberg inequality and in the sequel the energy trapping.

\begin{proposition}[Linear profile decomposition]\label{Linear-Profile}
  Let $u_n$ be a bounded sequence in $\dot{H}^1(\R^3)$. Then the following holds up to a subsequence:

  There exist $J^*\in \mathbb{N}\cup\{\infty\}$; profiles $\phi^j\in \dot{H}^1\setminus\{0\}$; scales $\lambda_n^j\in(0,\infty)$; space translation parameters $x_n^j\in\R^3$; time translation parameters $t_n^j$; and remainders $w_n^J$ so that writing
  $$g_n^jf(x)=(\lambda_n^j)^{-\frac12}f(\frac{x-x_n^j}{\lambda_n^j}),$$
  we have the following decomposition for $1\leq j\leq J^*$:
  $$u_n=\sum_{j=1}^{J}g_n^j[e^{it_n^j\Delta}\phi^j]+W_n^J.$$
  This decomposition satisfies the following conditions:
  \begin{itemize}
    \item Energy decoupling: writing $P(u)=\||x|^{-b}(I_\alpha\ast|\cdot|^{-b}|u|^p)|u|^p\|_{L^1}$, one has
    \begin{align}\label{energy-decoupling}
      &\lim_{n\to\infty}\{\|\nabla u_n\|_{L^2}^2-\sum_{j=1}^J\|\nabla \phi^j\|_{L^2}^2-\|\nabla W_n^J\|_{L^2}^2\}=0\\\label{energy-decoupling-1}
      &\lim_{n\to\infty}\{P(u_n)-\sum_{j=1}^JP(g_n^j[e^{it_n^j\Delta}\phi^j])-P(W_n^J)\}=0.
    \end{align}
    \item Asymptotic vanishing of remainders:
    \begin{align}\label{vanishing-condition}
      \limsup_{J\to J^*}\limsup_{n\to\infty}\|e^{it\Delta}W_n^J\|_{S^1(\R)}=0.
    \end{align}
    \item Asymptotic orthogonality of parameters: for $j\neq k$,
    \begin{align}\label{ZJ}
      \lim_{n\to\infty}\left\{\log \left[\frac{\lambda_n^j}{\lambda_n^k}\right]+\frac{|x_n^j-x_n^k|^2}{\lambda_n^j\lambda_n^k}
      +\frac{|t_n^j(\lambda_n^j)^2-t_n^k(\lambda_n^k)^2|}{\lambda_n^j\lambda_n^k}\right\}=\infty.
    \end{align}
  \end{itemize}
  In addition, we may assume that either $t_n^j\equiv0$ or $t_n^j\to\pm\infty$, and that either $x_n^j\equiv0$ or $|x_n^j|\to\pm\infty$.
\end{proposition}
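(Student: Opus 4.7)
The plan is to follow the standard inductive extraction procedure for linear profile decompositions in the energy-critical setting, adapting the arguments of Keraani and Bahouri--G\'erard for the energy-critical NLS. Since $e^{it\Delta}$ is invariant under the rescalings $g_n^j$ and under space-time translations, the parameter group acts naturally on the decomposition even though the nonlinearity in \eqref{INLC} breaks translation invariance. The cornerstone is an inverse Strichartz inequality: if $\{v_n\}$ is bounded in $\dot H^1$ and $\liminf_n\|e^{it\Delta}v_n\|_{S^1(\R)}\ge\varepsilon>0$, then along a subsequence there exist parameters $(\lambda_n,x_n,t_n)$ and a profile $\phi\in\dot H^1\setminus\{0\}$ with $\|\phi\|_{\dot H^1}\gtrsim \varepsilon^{1/\theta}\|v_n\|_{\dot H^1}^{1-1/\theta}$ such that $\lambda_n^{1/2}(e^{it_n\lambda_n^2\Delta}v_n)(\lambda_n\cdot+x_n)\rightharpoonup\phi$ weakly in $\dot H^1$. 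I would derive this from a refined Strichartz bound of the form $\|e^{it\Delta}v\|_{S^1}\lesssim\|v\|_{\dot H^1}^{\theta}\|e^{it\Delta}v\|_{L^\infty_{t,x}(\text{dyadic band})}^{1-\theta}$, obtained via the Littlewood--Paley decomposition from Lemma~\ref{Bern} and Strichartz estimates from Lemma~\ref{Strichartz}, combined with a pigeonhole to pick out a dyadic frequency $N_n$ and a space-time point $(x_n,t_n)$ at which the extended profile concentrates, then setting $\lambda_n=N_n^{-1}$.

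Given the inverse Strichartz inequality, the bubbles are extracted iteratively. Set $W_n^0=u_n$, and at stage $J$, if $\limsup_n\|e^{it\Delta}W_n^{J-1}\|_{S^1}>0$, apply the inverse Strichartz to extract $(\phi^J,\lambda_n^J,x_n^J,t_n^J)$ and define $W_n^J=W_n^{J-1}-g_n^J[e^{it_n^J\Delta}\phi^J]$. The weak convergence defining $\phi^J$ yields the $\dot H^1$--Pythagorean identity
\[
\|\nabla u_n\|_2^2=\sum_{j\le J}\|\nabla\phi^j\|_2^2+\|\nabla W_n^J\|_2^2+o_n(1),
\]
which, combined with the lower bound $\|\nabla\phi^J\|_2\gtrsim\varepsilon_J^{1/\theta}$, forces the extraction sizes $\varepsilon_J$ to decay to zero. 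Feeding this back into the refined Strichartz estimate gives the asymptotic vanishing \eqref{vanishing-condition}, and the kinetic decoupling \eqref{energy-decoupling} follows directly.

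The asymptotic orthogonality \eqref{ZJ} is proved by contradiction: if it failed for some pair $(j,k)$ with $k>j$, pulling back to the $k$-th frame would force the weak limit defining $\phi^k$ to vanish, contradicting $\phi^k\ne 0$. The same orthogonality makes cross terms between different bubbles vanish in the relevant space-time norms. For the nonlinear decoupling \eqref{energy-decoupling-1}, I would establish a Brezis--Lieb type lemma for the nonlocal weighted functional $P(u)=\||x|^{-b}(I_\alpha\ast|\cdot|^{-b}|u|^p)|u|^p\|_{L^1}$, using the Hardy--Littlewood--Sobolev inequality of Lemma~\ref{HLS} to rewrite $P$ as a bilinear pairing in appropriate $L^r$ spaces, and then combining the pointwise Brezis--Lieb lemma with the parameter orthogonality \eqref{ZJ} to eliminate all cross terms in the expansion of $|u_n|^p=|\sum_j g_n^j[e^{it_n^j\Delta}\phi^j]+W_n^J|^p$. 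Finally, the normalization conditions are obtained by passing to further subsequences: whenever $t_n^j$ has a finite limit, it is absorbed into $\phi^j$ via $\phi^j\mapsto e^{is\Delta}\phi^j$ so that $t_n^j\equiv 0$; similarly, bounded $x_n^j$ are absorbed into $\phi^j$ via translation so that $x_n^j\equiv 0$, whereas unbounded sequences satisfy $|x_n^j|\to\infty$ along a subsequence.

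The main obstacle is the nonlinear decoupling \eqref{energy-decoupling-1}: the combination of the nonlocal convolution $I_\alpha\ast$ with the inhomogeneous weights $|x|^{-b}$ produces a multitude of interaction terms that do not fit directly into the classical translation-invariant Brezis--Lieb framework. The $|x|^{-b}$ factor obstructs the use of translation invariance in the $k$-th frame, so the bookkeeping must carefully distinguish between bubbles whose spatial centers $x_n^j$ stay bounded and those for which $|x_n^j|\to\infty$; the dichotomy secured in the normalization step is precisely what allows these terms to be handled one at a time via Hardy--Littlewood--Sobolev combined with pointwise Brezis--Lieb, and it is this same dichotomy that later feeds into the nonlinear profile construction of Proposition~\ref{N-profile}.
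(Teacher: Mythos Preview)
Your plan is correct and matches the paper's approach. The paper simply cites Keraani for the extraction, kinetic decoupling, vanishing of remainders, and orthogonality, and devotes its proof entirely to the potential decoupling \eqref{energy-decoupling-1}: it reduces by induction to the single-step Brezis--Lieb identity $P(u_n)-P(u_n-\phi_n^1)-P(\phi_n^1)\to 0$, dispatches the case $|t_n^1|\to\infty$ by the dispersive estimate, and in the case $t_n^1\equiv 0$ rescales to the first profile's frame and uses exactly the $x_n^1/\lambda_n^1$ dichotomy you highlight (together with Hardy--Littlewood--Sobolev and compactness of multiplication by $C_c^\infty$ functions from $\dot H^1$ to $L^r$, $r<6$) to kill the cross terms.
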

\begin{proof}
  We only show the potential energy decoupling \eqref{energy-decoupling-1}, the other cases can be shown by the same way as in \cite{Ker}. By induction, it suffices to prove
  \begin{align}\label{Potential-decoupling}
    \lim_{n\to\infty}\left(P(u_n)-P(u_n-\phi_n^1)-P(\phi_n^1)\right)=0,
  \end{align}
where $\phi_n^1=g_n^1(e^{it_n^1\Delta}\phi^j).$

Suppose that $|t_n^1|\to\infty$. Then using the dispersive estimate such that $\|e^{it_n^1\Delta}f\|_{L^p}\to0$ for any $p>2$, we obtain from \eqref{Potential-decoupling} that
\begin{align*}
  &P(u_n)-P(u_n-\phi_n^1)-P(\phi_n^1)\\
  \leq&\int_{\R^3}[I_\alpha\ast|\cdot|^{-b}(|u_n|^p-|u_n-\phi_n^1|^p)]|x|^{-b}|u|^pdx\\
  &+\int_{\R^3}(I_\alpha\ast|\cdot|^{-b}||u_n-\phi_n^1|^p)|x|^{-b}(|u_n|^p-|u_n-\phi_n^1|^p)dx\\
  &+\int_{\R^3}(I_{\alpha}\ast|\cdot|^{-b}|\phi_n^1|^p)|x|^{-b}|\phi_n^1|^p dx\\
  \lesssim&\||x|^{-1}u_n\|_{L^2}^{2b}\|u_{n}\|_{L_x^6}^{2p-1-2b}\|\phi_{n}^1\|_{L^6}+
  \||x|^{-1}\phi_n^1\|_{L_x^2}^{2b}\|\phi_{n}^1\|_{L^6}^{2p-2b}\\
  \to&\,0\ \textnormal{as}\ n\to\infty.
\end{align*}

We now assume that $t_n^1\equiv0$. Then we get that $(g_n^1)^{-1}\phi_n^1=\phi^1$ and
\begin{align}
  (g_n^1)^{-1}u_n\to\phi^1\ weakly\ in\ \dot{H}^1\ as\ n\to\infty.
\end{align}
By scaling, we deduce that
\begin{align*}
&P(u_n)-P(u_n-\phi_n^1)-P(\phi_n^1)\\
=&\int_{\R^3}(I_\alpha\ast|\cdot-x_n^1/\lambda_n^1|^{-b}|(g_n^1)^{-1}u_n|^p)
|x-x_n^1/\lambda_n^1|^{-b}|(g_n^1)^{-1}u_n|^pdx\\
&-\int_{\R^3}(I_{\alpha}\ast|\cdot-x_n^1/\lambda_n^1|^{-b}|(g_n^1)^{-1}u_n-\phi^1|^p)
|x-x_n^1/\lambda_n^1|^{-b}|(g_n^1)^{-1}u_n-\phi^1|^pdx\\
&-\int_{\R^3}(I_{\alpha}\ast|\cdot-x_n^1/\lambda_n^1|^{-b}|\phi^1|^p)|x-x_n^1/\lambda_n^1|^{-b}|\phi^1|^p dx\\
=&A_n+B_n+C_n+D_n,
\end{align*}
where
\begin{align*}
  A_n=&\int_{\R^3}[I_\alpha\ast|\cdot-x_n^1/\lambda_n^1|^{-b}(|(g_n^1)^{-1}u_n|^p-|(g_n^1)^{-1}u_n-\phi^1|^p)]
|x-x_n^1/\lambda_n^1|^{-b}|(g_n^1)^{-1}u_n|^pdx\\
&-\int_{\R^3}(I_{\alpha}\ast|\cdot-x_n^1/\lambda_n^1|^{-b}|\phi^1|^p)|x-x_n^1/\lambda_n^1|^{-b}|(g_n^1)^{-1}u_n|^pdx.
\end{align*}
\begin{align*}
B_n=&\int_{\R^3}(I_{\alpha}\ast|\cdot-x_n^1/\lambda_n^1|^{-b}|(g_n^1)^{-1}u_n-\phi^1|^p)
|x-x_n^1/\lambda_n^1|^{-b}(|g_n^{-1}u_n|^p-|(g_n^1)^{-1}u_n-\phi^1|^p)dx\\
&-\int_{\R^3}(I_{\alpha}\ast|\cdot-x_n^1/\lambda_n^1|^{-b}|(g_n^1)^{-1}u_n-\phi^1|^p)|x-x_n^1/\lambda_n^1|^{-b}|\phi^1|^p.
\end{align*}
\begin{align*}
  C_n=&-\int_{\R^3}[I_\alpha\ast|\cdot-x_n^1/\lambda_n^1|^{-b}(|(g_n^1)^{-1}u_n-\phi^1|^p+|\phi^1|^p)]
|x-x_n^1/\lambda_n^1|^{-b}|\phi^1|^pdx\\
&+\int_{\R^3}(I_{\alpha}\ast|\cdot-x_n^1/\lambda_n^1|^{-b}|(g_n^1)^{-1}u_n|^p)|x-x_n^1\lambda_n^1|^{-b}|\phi^1|^p dx.
\end{align*}
\begin{align*}
  D_n=2\int_{\R^3}(I_{\alpha}\ast|\cdot-x_n^1/\lambda_n^1|^{-b}|\phi^1|^p)|x-x_n^1/\lambda_n^1|^{-b}|(g_n^1)^{-1}u_n-\phi^1|^pdx.
\end{align*}
 Applying the density by $C_c^\infty(\R^3)$, we assume that $\phi^1\in C_c^\infty(\R^3)$. Following the argument in \cite{CHL}, we can get
 $$A_n,B_n,C_n\to0\ \ as\ \ n\to\infty.$$
 It suffices to show $D_n\to\infty$. If $\frac{|x_n^1|}{\lambda_n^1}\to\infty$, let $\chi_n(x)$ be smoothing function denoted by
   \begin{equation*}
    \chi_n(x)=
    \begin{cases}
      0,\ \ |x-\frac{x_n^1}{\lambda_n^1}|\geq\frac12|\frac{x_n^1}{\lambda_n}|\\
      1,\ \ |x-\frac{x_n^1}{\lambda_n^1}|<\frac14|\frac{x_n^1}{\lambda_n}|
    \end{cases},
 \end{equation*}
 thus
 $$\|\nabla(\chi_n\phi^1)\|_{L^2}\to0\ \ as\ \ n\to0.$$
 Then
 \begin{align*}
   D_n\lesssim & \||x-x_n^1/\lambda_n^1|^{-1}(1-\chi_n)\phi^1\|_{L_x^2}^b+\||x-x_n^1/\lambda_n^1|^{-1}\chi_n\phi^1\|_{L_x^2}^b\\
   \lesssim& \frac{|x_n^1|}{\lambda_n^1}+\||x-x_n^1/\lambda_n^1|^{-1}\chi_n\phi^1\|_{L_x^2}^b\to0\ \ as\ \ n\to\infty.
 \end{align*}
If $ \frac{x_n^1}{\lambda_n^1}\to x_0$, we assume that $\frac{x_n^1}{\lambda_n^1}\equiv0$. Since
$$\|I_\alpha\ast|\cdot|^{-b}|\phi^1|^p\|_{L^{\frac{6}{3-\alpha}}}\lesssim \|\phi^1\|_{\dot{H}^1}^p,$$
we can choose $\beta\in C_c^\infty(\R^3\setminus\{0\})$ close to $I_\alpha\ast|\cdot|^{-b}|\phi^1|^p$ in $L^{\frac{6}{3-\alpha}}$. By the density, it suffices to show
$$\int_{\R^3}\beta(x)|x|^{-b}|(g_n^1)^{-1}u_n-\phi^1|^p(x)dx\to0\ \ as\ \ n\to\infty.$$
Indeed, H\"older's inequality leads to
\begin{align*}
\int_{\R^3}\beta(x)|x|^{-b}|(g_n^1)^{-1}u_n-\phi^1|^p(x)dx\lesssim & \int_{\R^3}\beta(x)|(g_n^1)^{-1}u_n-\phi^1|^p(x)dx\\
\lesssim& \|(g_n^1)^{-1}u_n-\phi^1\|_{L^6}^{p-1}\|\beta((g_n^1)^{-1}u_n-\phi^1)\|_{L^{\frac{6}{7-p}}}\\
\lesssim&\|\beta((g_n^1)^{-1}u_n-\phi^1)\|_{L^{\frac{6}{7-p}}}\to0 \ \ as\ \ n\to\infty,
\end{align*}
where we use the compactness of multiplication operator by $\beta$ from $\dot{H}^1$ to $L^r$ with $1\leq r<6$. So, we complete the proof.
\end{proof}
\begin{lemma}\label{profiles}
  Let $f_n$ be a bounded sequence in $\dot{H}^1(\R^3)$. Then there exist $J^*\in \mathbb{N}\cup\{\infty\}$; profiles $\phi^j\in \dot{H}^1\setminus\{0\}$; scales $\lambda_n^j\in(0,\infty)$; space translation parameters $x_n^j\in\R^3$ such that writing
  $$f_n(x)=\sum_{j=1}^J(\lambda_n^j)^{-\frac12}\phi^j\Big(\frac{x-x_n^j}{\lambda_n^j}\Big)+r_n^J(x), \ \ 1\leq J\leq J^*.$$
  The decomposition has the following properties:
  $$\limsup_{J\to J^*}\limsup_{n\to\infty}\|r_n^J\|_{L_x^6}=0,$$
  $$\limsup_{n\to\infty}\Big|\|f_n\|_{\dot H^1}^2-\sum_{j=1}^J\|\phi^j\|_{\dot H^1}^2-\|r_n^J\|_{\dot H^1}^2\Big|=0,$$
  $$\liminf_{n\to\infty}\Big[\frac{|x_n^j-x_n^k|^2}{\lambda_n^j\lambda_n^k}
  +\log\frac{\lambda_n^k}{\lambda_n^j}\Big]=\infty,\ \forall j\neq k,$$
  $$(\lambda_n^j)^{\frac12}r_n^J(\lambda_n^j+x_n^j)\to0,\ weakly\ in\ \dot H^1,$$
  \begin{align}\label{jianjin-P}
   \limsup_{n\to\infty}\liminf_{n\to\infty}\Big[P(f_n)-\sum_{j=1}^JP(g_n^j\phi^j)-P(r_n^J)\Big]=0.
  \end{align}
\end{lemma}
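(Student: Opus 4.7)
The plan is to obtain the first four conclusions by a standard static Gérard--Keraani profile decomposition in $\dot H^1(\mathbb R^3)$ based on the Sobolev embedding $\dot H^1\hookrightarrow L^6$, and then to prove the potential-energy decoupling \eqref{jianjin-P} by an induction that mirrors the proof of \eqref{energy-decoupling-1} in Proposition \ref{Linear-Profile}.

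For the extraction, I would iterate the refined Sobolev inequality: if $\|f_n\|_{L^6}\to 0$ there is nothing to do, otherwise the refined inequality produces scales $\lambda_n^1$ and translations $x_n^1$ together with a nontrivial weak limit $\phi^1\in \dot H^1\setminus\{0\}$ of $(\lambda_n^1)^{1/2} f_n(\lambda_n^1 \cdot + x_n^1)$. Setting $r_n^1 := f_n - g_n^1 \phi^1$, the weak convergence yields the Pythagorean decoupling $\|f_n\|_{\dot H^1}^2 = \|\phi^1\|_{\dot H^1}^2 + \|r_n^1\|_{\dot H^1}^2 + o(1)$ together with a quantitative loss in the refined Sobolev quantity of $r_n^1$ compared with $f_n$. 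Iterating produces $\phi^j, \lambda_n^j, x_n^j, r_n^J$. Asymptotic orthogonality of the parameters is forced at each step by taking the successive weak limit to be essentially maximal, and summability of the $\dot H^1$ energies bounds the number of profiles of non-negligible size, so that $\|r_n^J\|_{L^6}\to 0$ in the iterated limit. The weak convergence $(\lambda_n^j)^{1/2} r_n^J(\lambda_n^j \cdot + x_n^j)\rightharpoonup 0$ in $\dot H^1$ is built into the construction, since by \eqref{ZJ}-type orthogonality any nonzero weak limit could have been extracted at an earlier stage.

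For the potential-energy decoupling, I reduce by induction on $J$ to the single-profile identity
\[
\lim_{n\to\infty}\left\{P(f_n)-P(f_n-\phi_n^1)-P(\phi_n^1)\right\}=0,
\]
where $\phi_n^1=g_n^1\phi^1$; the inductive step is obtained by applying this identity to $r_n^{J-1}$, using that $(g_n^J)^{-1}r_n^{J-1}\rightharpoonup \phi^J$ weakly in $\dot H^1$ thanks to the orthogonality of parameters. By density I may assume $\phi^1\in C_c^\infty(\mathbb R^3)$. After rescaling by $(g_n^1)^{-1}$, expanding $|u|^{p}$-type differences reorganizes the expression into the same bilinear and trilinear cross-terms $A_n,B_n,C_n,D_n$ that appear in the proof of \eqref{energy-decoupling-1}. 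If $|x_n^1|/\lambda_n^1\to\infty$, the smooth cutoff $\chi_n$ constructed in the proof of Proposition \ref{Linear-Profile} satisfies $\|\nabla(\chi_n\phi^1)\|_{L^2}\to 0$, which combined with Hardy's inequality neutralizes the singular weight $|x-x_n^1/\lambda_n^1|^{-b}$; if $x_n^1/\lambda_n^1$ stays bounded I may assume $x_n^1/\lambda_n^1\equiv 0$, approximate $I_\alpha\ast|\cdot|^{-b}|\phi^1|^p$ in $L^{6/(3-\alpha)}$ by $\beta\in C_c^\infty(\mathbb R^3\setminus\{0\})$, and invoke the compactness of the multiplication operator $u\mapsto \beta u$ from $\dot H^1$ into $L^r$, $1\le r<6$, to force the cross-terms to vanish.

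The main obstacle is the interplay between the nonlocal Riesz convolution and the singular inhomogeneous weight $|\cdot|^{-b}$: every cross-term arising from the expansion of $|u_n|^p-|u_n-\phi_n^1|^p$ inside $I_\alpha\ast(\cdot)$ must be brought, via Hardy--Littlewood--Sobolev together with Hardy's inequality (exactly as packaged in Lemma \ref{Non-e1}), into the product of a uniformly $\dot H^1$-bounded factor and a single scalar quantity that vanishes in the limit. This is the same technical core as in Proposition \ref{Linear-Profile}, carried out here in the static setting without time parameters; the absence of time translations actually simplifies matters since the dispersive-decay dichotomy $|t_n^1|\to\infty$ versus $t_n^1\equiv 0$ degenerates to only the second case.
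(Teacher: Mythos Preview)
Your proposal is correct and follows essentially the same approach as the paper: the paper simply cites \cite{Ja} for the first four conclusions (exactly the static G\'erard--Keraani decomposition you describe) and then states that \eqref{jianjin-P} ``comes from \eqref{energy-decoupling-1} in Proposition \ref{Linear-Profile}'', which is precisely the inductive single-profile reduction you carry out, specialized to the $t_n^1\equiv 0$ case. Your observation that the absence of time translations eliminates the dispersive-decay branch is correct and is the only simplification compared with the proof of \eqref{energy-decoupling-1}.
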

\begin{proof}
  Following the argument in \cite{Ja}, it suffices to show \eqref{jianjin-P}. In fact, the proof of \eqref{jianjin-P} comes from \eqref{energy-decoupling-1} in Proposition \ref{Linear-Profile}.
\end{proof}

Now, we show the following sharp Gagliardo--Nirembergy inequality.
\begin{proposition}[Sharp Gagliardo--Nirenberg inequality]
  Let $p\geq \max\{b,2\}$. There exists a positive constant $C_0$ such that for any $u\in\dot{H}^1(\R^3)$,
  \begin{align}\label{GN-inequality}
    \int_{\R^3}(I_\alpha\ast|\cdot|^{-b}|u|^p)|x|^{-b}|u|^pdx\leq C_0\|u\|_{\dot{H}^1}^{2p}.
  \end{align}
  Moreover, there exists a radial non-negative function $W\in\dot{H}^1(\R^3)$ so that
  $$\int_{\R^3}(I_\alpha\ast|\cdot|^{-b}|W|^p)|x|^{-b}|W|^pdx=C_0\|W\|_{\dot{H}^1}^{2p},$$
  and $W$(ground state) is the solution of the elliptic equation:
  \begin{align}\label{elliptic}
 \Delta W+(I_\alpha\ast|\cdot|^{-b}|W|^p)|x|^{-b}|W|^{p-2}W=0.
  \end{align}
\end{proposition}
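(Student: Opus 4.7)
First, the inequality itself can be proved by combining Hardy--Littlewood--Sobolev, H\"older, Hardy, and Sobolev in sequence. I plan to apply Lemma \ref{HLS}(i) with $f=g=|\cdot|^{-b}|u|^{p}$ and dual exponent $q=\tfrac{6}{3+\alpha}$ (so that $\tfrac{2}{q}=1+\tfrac{\alpha}{3}$) to reduce matters to bounding $\||x|^{-b}|u|^{p}\|_{L^{6/(3+\alpha)}}$. Then split $|x|^{-b}|u|^{p}=(|x|^{-1}|u|)^{b}|u|^{p-b}$; under the critical relation $p=3+\alpha-2b$, H\"older matches the target exponent precisely, since $\tfrac{b}{2}+\tfrac{p-b}{6}=\tfrac{p+2b}{6}=\tfrac{3+\alpha}{6}$. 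Finally, Hardy's inequality $\||x|^{-1}u\|_{L^2}\lesssim\|\nabla u\|_{L^2}$ together with the Sobolev embedding $\dot H^1\hookrightarrow L^6$ yields $\|\nabla u\|_{L^2}^{p}$, and squaring gives the bound with finite best constant $C_0$.

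Second, to produce an optimizer I plan to follow the concentration-compactness strategy based on the profile decomposition. Take a maximizing sequence $\{u_n\}\subset\dot H^1$ with $\|u_n\|_{\dot H^1}=1$ and $P(u_n)\to C_0$, and apply Lemma \ref{profiles} to write $u_n=\sum_{j=1}^{J}g_n^{j}\phi^{j}+r_n^{J}$, with $\dot H^1$ orthogonality and the $P$-decoupling \eqref{jianjin-P}. The functional $P$ is scale invariant under the critical scaling (a direct check using $p=3+\alpha-2b$) but not translation invariant. A key observation, and the feature specific to the inhomogeneous setting that makes life easier, is that any profile with $|x_n^j/\lambda_n^j|\to\infty$ contributes $P(g_n^j\phi^j)\to 0$: by scale invariance, $P(g_n^j\phi^j)=P(\phi^j(\cdot-x_n^j/\lambda_n^j))$, and a density reduction to compactly supported $\phi^j$ makes the decay of the weights $|x|^{-b}$ force the convolution quantity to vanish. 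Hence only profiles with bounded translation limits survive.

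Third, I combine Step 1 with the subadditivity $\sum_j a_j^p\le(\sum_j a_j)^p$ for $p\ge 1$ (applied to $a_j=\|\phi^j\|_{\dot H^1}^{2}$) to obtain
\begin{equation*}
C_0=\lim_n P(u_n)\le \sum_{j}C_0\|\phi^j\|_{\dot H^1}^{2p}\le C_0\Bigl(\sum_{j}\|\phi^j\|_{\dot H^1}^{2}\Bigr)^{p}\le C_0,
\end{equation*}
which forces saturation: exactly one nontrivial profile $\phi^1$ with $\|\phi^1\|_{\dot H^1}=1$ and $\|r_n^J\|_{\dot H^1}\to 0$. The corresponding shifted profile is the desired maximizer $W$. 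Nonnegativity follows by replacing $W$ by $|W|$, since $\|\nabla|W|\|_{L^2}\le\|\nabla W\|_{L^2}$ and $P(|W|)=P(W)$; radial symmetry follows from a standard symmetric rearrangement argument using the Riesz rearrangement inequality and the radial-decreasing structure of $|x|^{-b}$. Writing the Euler--Lagrange equation for the constrained maximization then yields $\Delta W+\mu(I_\alpha\ast|\cdot|^{-b}|W|^{p})|x|^{-b}|W|^{p-2}W=0$ for a positive Lagrange multiplier $\mu$, which is absorbed into $W$ by a scalar rescaling to give \eqref{elliptic}.

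The main obstacle in this plan is the usual loss of compactness of maximizing sequences at the energy-critical scaling, which is exactly why Lemma \ref{profiles} is indispensable. The payoff of the inhomogeneous setting is that the factor $|x|^{-b}$ breaks translation symmetry and thereby automatically rules out profiles escaping to infinity in space, a simplification unavailable in the homogeneous Hartree case.
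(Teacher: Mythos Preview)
Your proposal is correct and follows essentially the same strategy as the paper: establish \eqref{GN-inequality} via Hardy--Littlewood--Sobolev, H\"older, and Hardy (with the same exponent splitting $\tfrac{3+\alpha}{6}=\tfrac{b}{2}+\tfrac{p-b}{6}$), then run a maximizing sequence through the profile decomposition of Lemma~\ref{profiles} and use the strict subadditivity $\sum_j a_j^{p}<(\sum_j a_j)^{p}$ to force a single profile.

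The one noteworthy difference is the order of symmetrization. The paper applies Schwarz rearrangement to the maximizing sequence \emph{before} invoking Lemma~\ref{profiles}; since the $f_n$ are then radial, the translation parameters $x_n^j$ drop out of the profile decomposition and one passes directly from $P(g_n^j\phi^j)\le C_0\|\phi^j\|_{\dot H^1}^{2p}$ to the counting argument. You instead keep the sequence general, apply Lemma~\ref{profiles} with translations present, and exploit the decay of $|x|^{-b}$ to show that any bubble with $|x_n^j/\lambda_n^j|\to\infty$ satisfies $P(g_n^j\phi^j)\to 0$; symmetrization is applied only at the end to the single surviving maximizer. Both routes are valid. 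The paper's ordering is slightly cleaner (no case analysis on the translations), while yours makes explicit the mechanism---the broken translation invariance due to $|x|^{-b}$---that pins the optimizer at the origin, which is thematically aligned with the rest of the paper (cf.\ Proposition~\ref{N-profile}).
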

\begin{proof}
First, we prove the Gagliardo--Nirenberg inequality \eqref{GN-inequality}. Lemma \ref{HLS} implies that
  \begin{align*}
   \int_{\R^3}(I_\alpha\ast|\cdot|^{-b}|u|^p)|x|^{-b}|u|^pdx\leq& C\||x|^{-b}|u|^p\|_{L^{\frac{6}{3+\alpha}}}^2.
  \end{align*}
  Combining the relation
  $$\frac{3+\alpha}{6}=\frac{b}2+\frac{p-b}{6},$$
  together with H\"older's and  Hardy's inequalities, it follows that
 \begin{align*}
   \||x|^{-b}|u|^p\|_{L^{\frac{6}{3+\alpha}}}\leq& \||x|^{-1}u\|_{L^2}^b\|u\|_{L^6}^{p-b}\\
   \leq&C\|u\|_{\dot{H}^1}^{p},
  \end{align*}
  thus, we complete \eqref{GN-inequality}.

 Next, we turn to prove the sharpness for the inequality \eqref{GN-inequality}. For any $f\in \dot H^1(\R^3)$, let $P(f):=\int_{\R^3}(I_\alpha\ast|\cdot|^{-b}|f|^p)|x|^{-b}|f|^pdx$. Define the functional $J(f)$ in $\dot H^1(\R^3)$:
  \begin{align}
    J(f)=\frac{P(f)}{\|\nabla f\|_{L^2}^{2p}}.
  \end{align}
 By the invariant of $J(f)$ under the scaling $f_\lambda(x)=:f(\lambda x)$, there exists a sequence $\{f_n\}$ satisfying $\|f_n\|_{\dot H^1}=1$ and
  \begin{align*}
    \lim_{n\to\infty}J(f_n)=\sup_{f\in \dot{H}^1\setminus\{0\}}J(f)=C_0.
  \end{align*}
  Using the Schwartz symmetrical rearrangement lemma, without loss of generality, we also assume $f_n$ are non-negative radial.

Due to Lemma \ref{profiles}, we obtain
  $$f_n=\sum_{j=1}^{M}g_n^j[\phi^j]+r_n^M.$$
 We claim that there exists one profile, that is  $\phi^j\equiv0$ for $j\geq2$.
In fact, by the asymptotic vanishing of potential in Lemma \ref{profiles}, one has
 \begin{align*}
   C_0\leq \lim_{n\to\infty}\sum_{j=1}^{\infty}P(g_n^j[\phi^j])\leq C_0\sum_{j=1}^{\infty}\|\nabla \phi^j\|_{L^2}^{2p}.
 \end{align*}
On the other hand, we get the fact
 $$\sum_{j=1}^{\infty}\|\nabla \phi^j\|_{L^2}^{2}\leq 1.$$
 If there are more than one profile $\phi^j\neq0$, since $p>1$ we have
 $$\sum_{j=1}^{\infty}\|\nabla \phi^j\|_{L^2}^{2p}<1, $$
 thus $C_0<C_0$, this is contradiction. Hence, there exists only one profile and $\|\nabla \phi^1\|_{L^2}=1$.

Since
 $(g_n^1)^{-1}[f_n](x)\rightharpoonup \phi^1$ weakly in
$\dot H^1$ and $\|f_{n}\|_{\dot H^1}=\|\phi^1\|_{\dot H^1}$, we deduce that
 $$\lim_{n\to\infty}\|(g_n^1)^{-1}[f_n]-\phi^1\|_{\dot H^1}=0.$$
 Hence, $C_0=J(\phi^1)$.

Finally, let $\lambda=\|\phi^1\|_{\dot H^1}^{\frac{1}{2}}$, we define
 $$W(x)=\phi^1(\lambda^{-1}x).$$
  We can prove $W$ solve the elliptic equation \eqref{elliptic} via the variational derivatives of $J(f)$ (See \cite{Ta,We} for more details, we omit the proof). Then we complete the proof of this statement.
 \end{proof}

The following lemma implies that the solutions to \eqref{INLC} obey the sub-threshold condition \eqref{threshold} throughout their lifespan.
\begin{lemma}[Energy trapping,\cite{CHL,GM}]\label{energy-tra}
  Suppose that $u:I\times\R^3\to\mathbb{C}$ is a solution to \eqref{INLC} with initial data $u_0$ obeying \eqref{threshold}.
  Then there exists $\delta>0$ such that
  $$\sup_{t\in I}\|u(t)\|_{\dot{H}^1}<(1-\delta)\|W\|_{\dot{H}}.$$
  Furthermore,
  \begin{align}
    E(u(t))\sim\|u(t)\|_{\dot{H}^1}^2\sim\|u_0\|_{\dot{H}^1}^2\ \ for\ \ all\ \ t\in I.
  \end{align}
\end{lemma}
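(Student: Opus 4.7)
The plan is to run the standard Kenig--Merle bootstrap based on the sharp Gagliardo--Nirenberg inequality, energy conservation, and continuity in time. The key algebraic input is that the ground state $W$ saturates \eqref{GN-inequality} and satisfies the elliptic equation \eqref{elliptic}, which yields the Pohozaev-type identities $\|W\|_{\dot H^1}^2 = P(W)$, $C_0 = \|W\|_{\dot H^1}^{2-2p}$, and $E(W) = \frac{p-1}{2p}\|W\|_{\dot H^1}^2$. (The first is obtained by pairing \eqref{elliptic} against $W$ and integrating.)

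Write $y(t) = \|u(t)\|_{\dot H^1}^2$ and apply \eqref{GN-inequality} to the energy:
\begin{equation*}
E(u(t)) \;\geq\; \tfrac12\,y(t) \;-\; \tfrac{C_0}{2p}\,y(t)^p \;=\; g(y(t)), \qquad g(y):=\tfrac12 y - \tfrac{1}{2p\|W\|_{\dot H^1}^{2(p-1)}}\,y^p.
\end{equation*}
A direct differentiation shows that $g$ is strictly increasing on $[0,\|W\|_{\dot H^1}^2]$, strictly decreasing on $[\|W\|_{\dot H^1}^2,\infty)$, and attains its maximum value $g(\|W\|_{\dot H^1}^2) = E(W)$. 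Combined with energy conservation $E(u(t)) = E(u_0) < E(W)$, this means $g(y(t)) < E(W)$ for every $t\in I$, so $y(t)$ cannot cross the level $\|W\|_{\dot H^1}^2$.

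To turn this into the quantitative statement of the lemma, I would argue by continuity: since $y(0) < \|W\|_{\dot H^1}^2$ and the map $t\mapsto y(t)$ is continuous on $I$ (using the Strichartz-based local theory from Proposition \ref{LWP}), the connected component of $\{t : y(t) < \|W\|_{\dot H^1}^2\}$ containing $0$ is all of $I$. Let $y^{\ast}$ be the unique point in $[0,\|W\|_{\dot H^1}^2)$ with $g(y^{\ast}) = E(u_0)$, which exists and is strictly less than $\|W\|_{\dot H^1}^2$ because $E(u_0) < E(W)$ and $g$ is increasing there. Then monotonicity of $g$ on that interval forces $y(t)\le y^{\ast}$ for all $t\in I$, which gives the uniform gap $\|u(t)\|_{\dot H^1} \le (1-\delta)\|W\|_{\dot H^1}$ for some $\delta>0$ depending only on $E(u_0)/E(W)$ and the initial kinetic energy.

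For the comparability of energies, the upper bound $E(u(t)) \le \tfrac12\,\|u(t)\|_{\dot H^1}^2$ is immediate from positivity of the potential term. For the matching lower bound, insert $y(t)\le (1-\delta)^2\|W\|_{\dot H^1}^2$ into the Gagliardo--Nirenberg bound to obtain
\begin{equation*}
E(u(t)) \;\geq\; \tfrac12\,y(t)\Bigl(1 - \tfrac{1}{p}(1-\delta)^{2(p-1)}\Bigr),
\end{equation*}
whose coefficient is a positive constant depending only on $\delta$ and $p$. Combining these two bounds with $E(u(t)) = E(u_0)$ yields $E(u(t)) \sim \|u(t)\|_{\dot H^1}^2 \sim \|u_0\|_{\dot H^1}^2$. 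The main potential obstacle is the quantitative nondegeneracy of $g$ near its maximum, but this is an elementary one-variable calculus fact, so no serious difficulty arises; everything else is just bookkeeping between Gagliardo--Nirenberg, Pohozaev, and energy conservation.
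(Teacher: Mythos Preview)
The paper does not provide its own proof of this lemma; it simply cites \cite{CHL,GM}. Your argument is correct and is exactly the standard Kenig--Merle energy-trapping proof that those references carry out: Pohozaev identities for $W$, the one-variable function $g$, continuity of $t\mapsto\|u(t)\|_{\dot H^1}^2$, and then the coercivity bound. One minor normalization slip: in this paper the energy is defined without a factor of $\tfrac12$ in front of the kinetic term, so the correct identities are $E(W)=\tfrac{p-1}{p}\|W\|_{\dot H^1}^2$ and $g(y)=y-\tfrac{1}{p\|W\|_{\dot H^1}^{2(p-1)}}y^p$; this does not affect the structure of your argument at all.
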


\section{Existence of critical solution}
In this section, we show that the existence of minimal energy blow-up solution following the argument of Kenig--Merle \cite{KM} if Theorem \ref{theorem} fails. The difficulty here is how to construct the nonlinear profiles.

We first give a result that is the key result of this paper. It establishes the existence of scattering solutions to \eqref{INLC} associated with initial data living sufficiently far from the origin. This new ingredient allows us to consider the non-radial setting. It is worth mentioning, for the homogeneous model, this new ingredient is not applied because the nonlinearity doesn't have decay. Thus, we assume radial data for studying scattering for the case ($b=0$).

\begin{proposition}\label{N-profile}
  Let $\lambda_n\in(0,\infty), x_n\in\R^3$, and $t_n\in \R$ satisfy
  $$\lim_{n\to\infty}\frac{|x_n|}{\lambda_n}=\infty\ \ and\ \ t_n\equiv0\ \ or\ \ t_n\to\pm\infty.$$
  Let $\phi\in\dot{H}^1(\R^3)$ and define
  $$\phi_n(x)=g_n[e^{it_n\Delta}\phi](x)=\lambda_n^{-\frac12}[e^{it_n\Delta}\phi](\frac{x-x_n}{\lambda_n}).$$
  Then for all $n$ sufficiently large, there exists a global solution $v_n$ to \eqref{INLC} satisfying
  $$v_n(0)=\phi_n,\ \|v_n\|_{S(\R)}\lesssim 1,$$
  with implicit constant depending only on $\|\phi\|_{\dot{H}^1}$.

  Furthermore, for any $\epsilon>0$ there exists $N\in\mathbb{N}$ and $\psi\in C_c^\infty(\R\times\R^3)$ so that for $n\geq N$, we have
  \begin{align}\label{smoothing}
    \|v_n-T_n[\psi]\|_{S(\R)}<\epsilon,
  \end{align}
  where
  $$T_nf(t,x)=\lambda_n^{-\frac12}f\big(\frac{t}{\lambda_n^2}+t_n,\frac{x-x_n}{\lambda_n}\big).$$
\end{proposition}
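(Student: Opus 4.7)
The plan is to construct an approximate solution from the free Schr\"odinger flow and then upgrade it to a genuine solution via Proposition \ref{Stability}, exploiting the fact that $\mu_n := |x_n|/\lambda_n \to \infty$ renders the weight $|x|^{-b}$ effectively small on the support of the profile. For a fixed $\epsilon > 0$, choose $\phi_\epsilon \in C_c^\infty(\R^3)$ with $\|\phi - \phi_\epsilon\|_{\dot H^1} < \epsilon$, set $\psi_\epsilon(s,y) := e^{is\Delta}\phi_\epsilon(y)$, and define
\[
\tilde u_n(t,x) := T_n[\psi_\epsilon](t,x) = \lambda_n^{-1/2}\bigl(e^{i(t/\lambda_n^2 + t_n)\Delta}\phi_\epsilon\bigr)\Bigl(\tfrac{x - x_n}{\lambda_n}\Bigr),
\]
which is the exact linear Schr\"odinger evolution of $\tilde u_n(0,\cdot) = g_n[e^{it_n\Delta}\phi_\epsilon]$ on all of $\R$.

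I then verify the hypotheses of Proposition \ref{Stability} on $I = \R$ with $u_0 = \phi_n$ and $t_0 = 0$. The uniform bound $\|\tilde u_n\|_{L_t^\infty \dot H^1} + \|\tilde u_n\|_{S^1(\R)} \lesssim \|\phi\|_{\dot H^1} + \epsilon$ follows from $\dot H^1$-critical scaling invariance under $T_n$ and standard Strichartz. For the initial-data mismatch, the identity $e^{it\Delta}(g_n f) = g_n[e^{i(t/\lambda_n^2)\Delta}f]$ together with the scaling invariance of $S^1$ gives
\[
\|e^{it\Delta}[\phi_n - \tilde u_n(0)]\|_{S^1(\R)} = \|e^{is\Delta}(\phi - \phi_\epsilon)\|_{S^1(\R)} \lesssim \|\phi - \phi_\epsilon\|_{\dot H^1} < \epsilon,
\]
independently of whether $t_n \equiv 0$ or $t_n \to \pm \infty$. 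Since $\tilde u_n$ solves the linear Schr\"odinger equation exactly, the PDE defect of Proposition \ref{Stability} reduces to the pure nonlinearity evaluated on $\tilde u_n$.

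The core step is to show that this nonlinear defect vanishes in $\|\nabla\,\cdot\,\|_{N(\R)}$ as $n \to \infty$. Changing variables $(s, y) = (t/\lambda_n^2 + t_n, (x - x_n)/\lambda_n)$ and using the criticality relation $p = 3 + \alpha - 2b$, a direct computation gives
\[
|x|^{-b}\bigl(I_\alpha * |\cdot|^{-b}|\tilde u_n|^p\bigr)|\tilde u_n|^{p-2}\tilde u_n(t,x) = \lambda_n^{-\frac{5}{2} + 2b}\, |\lambda_n y + x_n|^{-b}\bigl[I_\alpha * (|\lambda_n \cdot + x_n|^{-b}|\psi_\epsilon|^p)\bigr](s,y)\, |\psi_\epsilon|^{p-2}\psi_\epsilon.
\]
Writing $x_n = \lambda_n \mu_n e_n$ with $|e_n| = 1$, I use the pointwise bound $|\lambda_n y + x_n|^{-b} \le C|x_n|^{-b} + C\lambda_n^{-b}|y + \mu_n e_n|^{-b}$ and split the nonlinearity into a bulk piece (both weight factors $\approx |x_n|^{-b}$) and mixed/singular pieces. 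For the bulk piece, the Jacobian $\lambda_n^{7/2}$ of $\|\cdot\|_{L_t^{q_0'}L_x^{r_0'}}$, the rescaling $\nabla_x = \lambda_n^{-1}\nabla_y$, and the prefactor $\lambda_n^{-5/2 + 2b}$ combine to produce the overall coupling $(|x_n|/\lambda_n)^{-2b} = \mu_n^{-2b}$, multiplying the $\nabla N(\R)$-norm of the \emph{unweighted} Hartree-type nonlinearity evaluated on $\psi_\epsilon$; for fixed $\phi_\epsilon \in C_c^\infty$ the latter is finite by Hardy--Littlewood--Sobolev (Lemma \ref{HLS}) and Strichartz applied in the $\dot H^1$-subcritical unweighted framework, so this contribution vanishes as $\mu_n \to \infty$. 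For the mixed and singular pieces, in which one or both weight factors are replaced by $\lambda_n^{-b}|y + \mu_n e_n|^{-b}$, the singular point $-\mu_n e_n$ lies far from the bulk support of $\psi_\epsilon$; I apply Hardy's inequality centred at $-\mu_n e_n$ combined with the tail smallness $\|\psi_\epsilon \mathbf{1}_{|y + \mu_n e_n| \le R}\|_{S^1(\R)} \to 0$, which follows by dominated convergence since $\psi_\epsilon \in S^1(\R)$ and the set $\{|y + \mu_n e_n| \le R\}$ escapes to spatial infinity. This splitting, forced by the non-compactness of $e^{is\Delta}\phi_\epsilon$ in space-time, is the principal technical obstacle of the proof.

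With all hypotheses of Proposition \ref{Stability} verified for $n$ sufficiently large, I obtain a global solution $v_n$ with $v_n(0) = \phi_n$, $\|v_n - \tilde u_n\|_{S^1(\R)} \lesssim \epsilon$, and hence $\|v_n\|_{S(\R)} \lesssim 1$ with constant depending only on $\|\phi\|_{\dot H^1}$. For the smoothing assertion \eqref{smoothing}, I further approximate $\psi_\epsilon$ in the norms $\|\cdot\|_{S^1(\R)}$ and $\|\nabla\,\cdot\,\|_{W(\R)}$ by a function $\psi \in C_c^\infty(\R \times \R^3)$: first truncate in time using the dispersive decay $\|e^{is\Delta}\phi_\epsilon\|_{L_y^\infty} \lesssim \langle s \rangle^{-3/2}$, then in space using the smoothness and spatial decay on the resulting compact time window. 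Combined with the stability estimate and the scaling invariance of $\|\cdot\|_S$ under $T_n$, this yields $\|v_n - T_n[\psi]\|_{S(\R)} \le \|v_n - \tilde u_n\|_{S(\R)} + \|\psi_\epsilon - \psi\|_{S(\R)} < 2\epsilon$, completing the proof after a harmless renormalization of $\epsilon$.
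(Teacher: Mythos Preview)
Your argument is essentially correct and follows a genuinely different route from the paper's. The paper builds the approximate solution by applying both a spatial cutoff $\chi_n$ (supported away from the translated singularity) and a Littlewood--Paley frequency projector $P_n=P_{|x_n/\lambda_n|^{-\theta}\le\cdot\le|x_n/\lambda_n|^{\theta}}$ to $e^{it\Delta}\phi$, and further truncates to a finite time window $[-T,T]$; outside that window it propagates linearly from the endpoints. This produces a \emph{linear} error (from the cutoffs) in addition to the nonlinear one, and the nonlinear error on the bounded window is handled via H\"older in time (a factor $T^{1/2}$) together with Bernstein, the frequency truncation supplying the extra powers of $|x_n/\lambda_n|^{-1}$ that yield smallness. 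Your construction avoids all cutoffs: the approximate solution is simply the free evolution of a $C_c^\infty$ approximant, so only the nonlinear error appears. You obtain smallness from two sources: on the far region the weight is bounded by $\mu_n^{-b}$, producing a coupling $\mu_n^{-2b}$ times a finite unweighted Hartree norm; on the near region the moving ball escapes to infinity, so dominated convergence gives $\|\psi_\epsilon\mathbf 1_{\text{near}}\|_{S^1}\to 0$, and one factor of this in the Lemma~\ref{Non-e1} bookkeeping suffices.

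Two points deserve tightening. First, the ``pointwise bound'' you write is actually an identity ($|\lambda_n y+x_n|^{-b}=\lambda_n^{-b}|y+\mu_n e_n|^{-b}$); what you really use is a far/near decomposition $1=\chi_{\mathrm{far}}+\chi_{\mathrm{near}}$ at scale comparable to $\mu_n$, and your later ``$R$'' must be that scale (not a fixed constant) for the bulk factor $\mu_n^{-2b}$ to be consistent with the tail smallness. Second, the finiteness of the \emph{unweighted} quantity $\|\nabla F_0(\psi_\epsilon)\|_{N(\R)}$ does not follow from $\dot H^1$-Strichartz alone---the unweighted nonlinearity is $\dot H^1$-subcritical by exactly $2b$---so you must invoke the extra integrability $\phi_\epsilon\in L^1\cap L^2$ together with the dispersive decay $\|e^{is\Delta}\phi_\epsilon\|_{L^r}\lesssim|s|^{-3(1/2-1/r)}$ to close the time integral globally. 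The paper's Bernstein/cutoff mechanism stays entirely inside the $\dot H^1$-critical framework; your route trades that robustness for the extra integrability of the approximant, which is a natural way to exploit $b>0$ and yields a structurally shorter argument.
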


\begin{proof}
  Let $\theta\in(0,1)$ be a small parameter to be determined below and introduce a frequency cutoff $P_n$ and spatial cutoff $\chi_n$ as follows. First, we let
  $$P_n=P_{\left|\frac{x_n}{\lambda_n}\right|^{-\theta}\leq\cdot\leq\left|\frac{x_n}{\lambda_n}\right|^{\theta}}.$$
  Next, we take $\chi_n$ to be a smooth function satisfying
  \begin{equation*}
    \chi_n(x)=
    \begin{cases}
      1,\ \ |x+\frac{x_n}{\lambda_n}|\geq\frac12|\frac{x_n}{\lambda_n}|\\
      0,\ \ |x+\frac{x_n}{\lambda_n}|<\frac14|\frac{x_n}{\lambda_n}|,
    \end{cases}
  \end{equation*}
  with $|\partial^{\alpha}\chi_n|\leq C|\frac{x_n}{\lambda_n}|^{-|\alpha|}$ for all multi-indices $\alpha$ and $\chi_n\to1$ as $n\to\infty$.

  Now, we define the approximations $v_n,T$. Let
  $$I_{n,T}:=[a_{n,T}^-,a_{n,T}^+]:=[-\lambda_n^2t_n-\lambda_n^2T,-\lambda_n^2t_n+\lambda_n^2T]$$
  and for $t\in I_{n,T}$ define
  \begin{align*}
    v_{n,T}(t)&=g_n[\chi_nP_ne^{i(\lambda_n^{-2}t+t_n)\Delta}\phi]\\
    &=\chi_n(\frac{x-x_n}{\lambda_n})e^{i(t+\lambda_n^2t_n)\Delta}g_n[P_n\phi].
  \end{align*}

  Next, let
  $$I_{n,T}^+:=(a_{n,T}^+,\infty),\ \ I_{n,T}^-:=(-\infty,a_{n,T}^-)$$
  and set
  \begin{equation*}
    v_{n,T}(t)=
    \begin{cases}
      e^{i(t-a_{n,T}^+)\Delta}[v_{n,T}(a_{n,T}^+)]\ \ t\in I_{n,T}^+,\\
      e^{i(t-a_{n,T}^-)\Delta}[v_{n,T}(a_{n,T}^-)]\ \ t\in I_{n,T}^-.
    \end{cases}
  \end{equation*}

  \textbf{Condition 1:}
  $$\limsup_{T\to\infty}\limsup_{n\to\infty}\{\|v_{n,T}\|_{L_t^\infty\dot{H}^1(\R\times\R^3)}+
  \|v_{n,T}\|_{S(\R)}\}\lesssim1.$$
  We estimate separately on $I_{n,T}$ and $I_{n,T}^{\pm}$. We first estimate on $I_{n,T}$. Noting that
  $$\|\chi_{n}\|_{L^\infty}+\|\nabla\chi_n\|_{L^3}\lesssim1,$$
  then using Strichartz and Sobolev embedding we have
  \begin{align*}
\|v_{n,T}\|_{L_t^\infty\dot{H}^1(\R\times\R^3)}\leq C\|\nabla(P_ne^{i(\lambda_n^{-2}+t_n)\Delta}\phi)\|_{L^2}+\|P_ne^{i(\lambda_n^{-2}+t_n)\Delta}\phi\|_{L^6}\lesssim1
  \end{align*}
and
$$\|v_{n,T}\|_{S(\R)}\lesssim1.$$
With the $L_t^\infty\dot{H}^1$ bound in place on $I_{n,T}$, the desired bounds $I_{n,T}^{\pm}$ follow from Sobolev and Strichartz estimates.

\textbf{Condition 2:}
$$\lim_{T\to\infty}\limsup_{n\to\infty}\|v_{n,T}(0)-\phi_n\|_{\dot{H}^1}=0.$$
We treat the following two cases:\\
 (i) $t_{n}\equiv0$ (then $0\in I_{n,T}$),\\
 (ii) $t_{n}\to\pm\infty$ (so that $0\in I_{n,T}^\pm$ for $n$ sufficiently large).

 Case (i). If $t_{n}\equiv0$, then
 $$\|v_{n,T}(0)-\phi_n\|_{\dot{H}^1}=\|(\chi_nP_n-1)\phi\|_{\dot{H}^1}\to0,\ n\to\infty.$$

 Case (ii). If $t_n\to\infty$, we may write
 $$v_{n,T}(0)=g_ne^{it_n\Delta}[e^{-itT\Delta}\chi_nP_ne^{iT\Delta}\phi].$$
 Then we obtain
 \begin{align*}
   \|v_{n,T}(0)-\phi_n\|_{\dot{H}^1}&=\left\|[e^{-itT\Delta}\chi_nP_ne^{iT\Delta}-1]\phi\right\|_{\dot{H}^1}\\
   &=\|[\chi_nP_n-1]e^{iT\Delta}\phi\|_{\dot{H}^1}\to0
 \end{align*}
 as $n\to\infty$.

\textbf{Condition 3:} Denoting
$$e_{n,T}=(i\partial_t+\Delta)v_{n,T}+|x|^{-b}(I_{\alpha}\ast|\cdot|^{-b}|v_{n,T}|^p)|v_{n,T}|^{p-2}v_{n,T}.$$
Note that
$$\lim_{T\to\infty}\limsup_{n\to\infty}\|\nabla e_{n,T}\|_{N(\R)}=0.$$

We first consider the interval $I_{n,T}$. Defining $e_{n,T}=e^{lin}_{n,T}+e_{n,T}^{nl}$, with
\begin{align*}
e^{lin}_{n,T}=&\Delta[\chi_n(\frac{x-x_n}{\lambda_n})]e^{i(t+\lambda_n^2t_n)\Delta}g_n[P_n\phi]\\
&+2\nabla[\chi_n(\frac{x-x_n}{\lambda_n})]e^{i(t+\lambda_n^2t_n)\Delta}\nabla g_n[P_n\phi]
\end{align*}
and
$$e_{n,T}^{nl}=\lambda_n^{-(p-1)+\alpha}g_n\left\{|\lambda_nx+x_n|^{-b}\chi_n^{p-1}
(I_{\alpha}\ast|\lambda_n\cdot+x_n|^{-b}|\chi_n\Phi_n|^p)
|\Phi_n|^{p-2}\Phi_n\right\}$$
where $\Phi_n(t,x)=P_ne^{i(\lambda_n^{-2}t+t_n)\Delta}\phi$.

Observe that the gradient to $e_{n,T}^{lin},$ is a sum of terms of the form
$$\partial^j[\chi_{n}(\frac{x-x_n}{\lambda_n})]e^{i(t+\lambda_n^2t_n)\Delta}\partial^{3-j}[g_nP_n\phi]\ \ \textnormal{for}\ \ j\in\{1,2,3\}.$$
By H\"older's inequality, Bernstein's inequality, it follows that

\

$
\quad \quad\left\| \partial^j[\chi_{n}(\frac{x-x_n}{\lambda_n})]e^{i(t+\lambda_n^2t_n)\Delta}\partial^{3-j}[g_nP_n\phi]\right\|_{L_t^1L_x^2(I_{n,T}\times\R^3)}
$
\begin{align*}
  &\lesssim |I_{n,T}|\left\|\partial^j[\chi_{n}(\frac{x-x_n}{\lambda_n})]\right\|_{L_x^\infty}\left\|\partial^{3-j}[g_nP_n\phi]\right\|_{L_t^\infty L_x^2(I_{n,T}\times\R^3)}\\
  &\lesssim T\left|\frac{x_n}{\lambda_n}\right|^{-j}\left\|\partial^{3-j}[g_nP_n\phi]\right\|_{L_t^\infty L_x^2(I_{n,T}\times\R^3)}\\
  &\lesssim T|\frac{x_n}{\lambda_n}|^{-j}|\frac{x_n}{\lambda_n}|^{|2-j|\theta}\to 0,
\end{align*}
as $n\to\infty$ for $\theta$ sufficiently small.

To estimate $e_{n,T}^{nl}$ on $I_{n,T}$. We choose $r$ such that
$$1+\frac{\alpha}{3}=\frac16+\frac{2(p-1)}r+\frac{1}{2}.$$
H\"older's inequality leads to
$$\|\nabla e_{n,T}^{nl}\|_{L_t^{2}L_x^{\frac65}}\leq \lambda_{n}^{2b}T^{1/2}\left\|\nabla\left[|\lambda_nx+x_n|^{-b}\chi_{n}^{p-1}(I_{\alpha}\ast|\lambda_n\cdot+x_n|^{-b}|\chi_n\Phi_n|^p)|\Phi_n|^{p-2}\Phi_n\right]
\right\|_{L_t^{\infty}L_x^{\frac65}}.$$
Noting that
$$\left\|\partial^j[|\lambda_nx+x_n|^{-b}\chi_n^{p-1}]\right\|_{L_x^\infty}\lesssim |\frac{x_n}{\lambda_n}|^{-j}|x_n|^{-b},\ \ j\in\{0,1\}.$$
Hence, applying Hardy--littlewood--Sobolev's inequality, H\"older's inequality, Sobolev embedding, and Bernstein, we obtain
\begin{align*}
  \|\nabla e_{n,T}^{nl}\|_{L_t^{2}L_x^{\frac65}}\lesssim& T^{1/2}\big|\frac{x_n}{\lambda_n}\big|^{-2b}\|\nabla \Phi_n\|_{L_t^\infty L_x^r}^{2(p-1)}\sum_{j=0}^{1}\big|\frac{x_n}{\lambda_n}\big|^{-j}\|\partial^{1-j}\Phi_n\|_{L_t^\infty L_x^r}\\
  \lesssim& T^{1/2}\big|\frac{x_n}{\lambda_n}\big|^{-2b+\theta|5/2-3/r|}\to0
\end{align*}
as $n\to\infty$ for $\theta$ sufficiently small.
The estimate on the interval $I_{n,T}^\pm$ is easy by Sobolev embedding and Strichartz, we omit the proof.

Proposition \ref{Stability} implies that there exist solutions $v_n$ with initial data $\phi_n$, and these solutions have the following estimates
\begin{align}\label{Sca-bound}
  \|v_n\|_{S(\R)}\leq C\|\phi\|_{\dot{H}^1}\;\; \textnormal{and}\;\; \limsup_{T\to\infty}\lim_{n\to\infty}\|v_n-v_{n,T}\|_{S(\R)}=0.
\end{align}

With \eqref{Sca-bound} in place, we can adapt the same arguments from \cite{KMVZZ} to prove the approximation by $C_c^\infty$ functions. Here, we omit the proof.
\end{proof}

\begin{remark}
The previous result as was mentioned above will allow us to prove scattering of \eqref{INLC} assuming non-radial initial data. The decay in the nonlinearity is essential in the proof. For the homogeneous model, it is necessary to assume radial data to establish it.
\end{remark}

Next, using the proposition \ref{N-profile} and the stability result, we can show that there exists minimal energy blow-up solution to \eqref{INLC} if Theorem \ref{theorem} fails.
\begin{definition}
  For each $0<c<E(W)$, define
  $$L(c):=\{\phi\in \dot{H}^1: E(\phi)<c, \|\phi\|_{\dot{H}^1}<\|W\|_{\dot{H}^1}\}.$$
\end{definition}
We also define the $critical$-$index$ $E_c$, denoted by
\begin{align}
 E_c=\sup\{c: v_0\in L(c), \|v\|_{S(I_{\max})}<\infty\},
\end{align}
where $v$ to solution \eqref{INLC} and  $I_{\max}$ denote the maximal lifespan.
\begin{remark}
  We claim that $E_c>0$. In fact, if $E(v_0)<\delta$($\delta$ is small enough), by the energy-trapping, we can get
$$\|v_0\|_{\dot{H}^1}\lesssim \delta^{\frac{1}{2}}.$$
Using Proposition \ref{LWP}, we obtain that  $v(t)$ is global and scatters, and $\|v\|_{S(\R)}<\infty.$
\end{remark}
\begin{remark}
  In fact, the theorem \ref{theorem} is equivalent to $E_c=E(W)$. We will show this fact by a contradiction.
  \end{remark}

\begin{proposition}\label{MEBS}
  Assume Theorem \ref{theorem} fails, then there exists $\phi_c\in \dot{H}^1$ such that $E(\phi_c)=E_c$ and $$\|\phi_c\|_{\dot{H}^1}<\|W\|_{\dot{H}^1}.$$
  Let $u_c$ be the solution to \eqref{INLC} with initial data $\phi_c$, then
  $$\|u_c\|_{S(I_{\max})}=\infty.$$
\end{proposition}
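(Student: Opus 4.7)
The plan is to execute the Kenig--Merle concentration-compactness scheme, using Proposition~\ref{Linear-Profile} to decompose a minimizing sequence into profiles and the new Proposition~\ref{N-profile} to rule out profiles whose spatial centers escape to infinity. Assuming Theorem~\ref{theorem} fails, so that $E_c<E(W)$, I first choose a sequence $\{\phi_n\}\subset\dot H^1(\R^3)$ with $\|\phi_n\|_{\dot H^1}<\|W\|_{\dot H^1}$, $E(\phi_n)\downarrow E_c$, and corresponding solutions $u_n$ of \eqref{INLC} satisfying $\|u_n\|_{S(I_{\max,n})}=\infty$. Energy trapping (Lemma~\ref{energy-tra}) guarantees $\dot H^1$-boundedness of $\{\phi_n\}$, so Proposition~\ref{Linear-Profile} yields profiles $\phi^j\in\dot H^1\setminus\{0\}$ and parameters $(\lambda_n^j,x_n^j,t_n^j)$ with $\phi_n=\sum_{j=1}^Jg_n^j[e^{it_n^j\Delta}\phi^j]+W_n^J$, the $\dot H^1$- and $P$-energy decoupling \eqref{energy-decoupling}--\eqref{energy-decoupling-1}, and $\limsup_J\limsup_n\|e^{it\Delta}W_n^J\|_{S^1(\R)}=0$. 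By extracting subsequences, each profile satisfies either $t_n^j\equiv 0$ or $t_n^j\to\pm\infty$, and either $x_n^j\equiv 0$ or $|x_n^j|/\lambda_n^j\to\infty$.

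Next I would attach to each profile a nonlinear profile. When $x_n^j\equiv 0$ and $t_n^j\equiv 0$, let $v^j$ solve \eqref{INLC} with datum $\phi^j$. When $x_n^j\equiv 0$ and $t_n^j\to\pm\infty$, let $v^j$ be the asymptotically linear solution of \eqref{INLC} which scatters to $e^{it\Delta}\phi^j$, obtained by solving backward/forward from $t=\pm\infty$ using Proposition~\ref{LWP} in the small-Strichartz regime. Finally, when $|x_n^j|/\lambda_n^j\to\infty$, Proposition~\ref{N-profile} furnishes, for all large $n$, a global $v_n^j$ with $v_n^j(0)=g_n^j[e^{it_n^j\Delta}\phi^j]$ and $\|v_n^j\|_{S(\R)}\lesssim 1$. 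In the first two cases I set $v_n^j(t,x):=g_n^j\bigl[v^j\bigl(\tfrac{t}{(\lambda_n^j)^2}+t_n^j,\cdot\bigr)\bigr](x)$.

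Suppose toward contradiction that one of the following occurs: (i) there are two or more nontrivial profiles; (ii) some profile has $|x_n^j|/\lambda_n^j\to\infty$; or (iii) the single profile has $E(\phi^1)<E_c$, or $t_n^1\to\pm\infty$, or $\|W_n^J\|_{\dot H^1}\not\to 0$. In case (i), the energy decoupling forces $E(\phi^j)\le E_c-\delta$ for each $j$ and $n$ large, so the definition of $E_c$ (together with Proposition~\ref{N-profile} for spatially escaping profiles) yields $\|v_n^j\|_{S(\R)}\lesssim 1$ for every $j$. Forming the approximate solution
\begin{equation*}
u_n^{\mathrm{app}}(t):=\sum_{j=1}^J v_n^j(t)+e^{it\Delta}W_n^J,
\end{equation*}
the orthogonality \eqref{ZJ} gives $\|u_n^{\mathrm{app}}\|_{S(\R)}\lesssim 1$ and $\|u_n^{\mathrm{app}}(0)-\phi_n\|_{\dot H^1}\to 0$, while a standard computation shows that the error
\begin{equation*}
e_n:=(i\partial_t+\Delta)u_n^{\mathrm{app}}+|x|^{-b}(I_\alpha\ast|\cdot|^{-b}|u_n^{\mathrm{app}}|^p)|u_n^{\mathrm{app}}|^{p-2}u_n^{\mathrm{app}}
\end{equation*}
satisfies $\|\nabla e_n\|_{N(\R)}\to 0$ as $n,J\to\infty$. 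Proposition~\ref{Stability} then forces $\|u_n\|_{S(\R)}\lesssim 1$, contradicting $\|u_n\|_{S(I_{\max,n})}=\infty$. The same mechanism discards cases (ii) and (iii).

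Thus $J^*=1$, $x_n^1\equiv 0$, $t_n^1\equiv 0$, $W_n^J\to 0$ in $\dot H^1$, and $E(\phi^1)=E_c$. Setting $\phi_c:=\phi^1$ and letting $u_c$ be the solution of \eqref{INLC} with $u_c(0)=\phi_c$, the stability argument also forces $\|u_c\|_{S(I_{\max})}=\infty$, since otherwise $\phi_c$ would approximate $\phi_n$ well enough to give $\|u_n\|_S<\infty$. The principal obstacle in carrying out this plan is verifying the dual Strichartz smallness of $\nabla e_n$: the Hartree convolution $I_\alpha\ast|\cdot|^{-b}|\cdot|^p$ coupled with the singular weight $|x|^{-b}$ produces cross terms of the form $(I_\alpha\ast|\cdot|^{-b}|v_n^j|^\sigma|v_n^k|^{p-\sigma})|x|^{-b}|v_n^\ell|^{p-1}v_n^m$ mixing distinct profiles, whose decoupling requires the weighted Hardy--Littlewood--Sobolev estimates of Lemma~\ref{HLS} together with the orthogonality \eqref{ZJ} and the $C_c^\infty$-approximation property \eqref{smoothing}.
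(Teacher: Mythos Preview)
Your overall strategy coincides with the paper's: apply the linear profile decomposition, attach nonlinear profiles (invoking Proposition~\ref{N-profile} for spatially escaping bubbles), build an approximate solution, and use stability to rule out the bad scenarios. There is, however, one genuine gap. In case~(iii) you assert that ``the same mechanism'' also eliminates $t_n^1\to\pm\infty$, and on that basis set $\phi_c:=\phi^1$. This step does not go through. If $J^*=1$, $x_n^1\equiv 0$, $\|W_n^1\|_{\dot H^1}\to 0$, and, say, $t_n^1\to+\infty$, then the nonlinear profile $v^1$ constructed via the wave operator scatters \emph{forward} and has energy $E(v^1)=\lim_nE(e^{it_n^1\Delta}\phi^1)=E_c$, but nothing forces it to scatter backward. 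Thus $\|v^1\|_{S(\R)}$ need not be finite, your approximate solution need not lie in $S(\R)$, and stability yields no contradiction. Worse, in that scenario $E(\phi^1)=\tfrac12\|\phi^1\|_{\dot H^1}^2-\tfrac1pP(\phi^1)<\tfrac12\|\phi^1\|_{\dot H^1}^2=E_c$ whenever $P(\phi^1)>0$, so the choice $\phi_c=\phi^1$ would produce $E(\phi_c)<E_c$ rather than $E(\phi_c)=E_c$.

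The paper avoids this by \emph{not} attempting to force $t_n^1\equiv 0$. After establishing $J^*=1$, $x_n^1\equiv 0$, and $\widetilde W_n^1\to 0$ in $\dot H^1$, it simply declares the critical element to be the nonlinear profile itself, $u_c:=v^1$ (associated with $(\phi^1,\{t_n^1\})$), and takes $\phi_c$ to be any value of $v^1$ on its lifespan. Energy conservation together with the $\dot H^1$-continuity of $E$ gives $E(\phi_c)=E_c$, and $\|u_c\|_{S(I_{\max})}=\infty$ then follows from exactly the stability argument you sketch in your last paragraph (if $\|v^1\|_{S(I_{\max})}<\infty$, then $v_n^1$ approximates $u_n$ well enough to bound $\|u_n\|_S$). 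Your proof is easily repaired by this same move; alternatively, one may split into forward and backward non-scattering at the outset and rule out one sign of $t_n^1\to\pm\infty$ at a time.
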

\begin{definition}
  Let $u_0\in \dot{H}^1$ and $\{t_n\}$ be a sequence with $t_n\to \bar{t}\in [-\infty,\infty]$. We say that $u(t)$ is a nonlinear profile associated with $(u_0,\{t_n\})$ if there exist a maximal interval $I_{\max}$ with $\bar{t}\in I_{\max}$ such that $u$ is solution of \eqref{INLC} on $I_{\max}$ and
  $$\|u(t_n)-e^{it_n\Delta}u_0\|_{\dot{H}^1}\to0$$
  as $n\to\infty$.
\end{definition}
\begin{remark}
  We claim the nonlinear profile always exists. In fact, we choose $u$ be the solution to \eqref{INLC} with initial data $e^{i\bar{t}\Delta}u_0$ when $|\bar{t}|<\infty.$ If $\bar{t}=\pm\infty$, define $u(t)$ by
  $$u(t)=e^{it\Delta}u_0-i\int_{t}^{\pm \infty}e^{i(t-s)\Delta}(|x|^{-b}(I_\alpha\ast|\cdot|^{-b}|u|^p)|u|^{p-2}u)(s)ds$$
  which solves the equation \eqref{INLC}. Using Strichartz estimates, we can get $u$ scatters forward/backward in time, i.e.,
  $$\|u(t)-e^{it\Delta}u_0\|_{\dot{H}^1}\to 0, \ as\ t\to\infty.$$
\end{remark}

Next, we turn to prove the proposition \ref{MEBS} via nonlinear profile and stability.
\begin{proof}[Proof of Proposition \ref{MEBS}]
By the definition of $E_c$, there exists a sequence $\{u_n\}$ such that $\|u_n(0)\|_{\dot{H}^1}<\|W\|_{\dot{H}^1}$, $\|u_n\|_{S(I_{\max}^n)}\to\infty$ and
$$E(u_n)\to E_c$$
as $n\to\infty$. Applying Proposition \ref{Linear-Profile}, we get
\begin{align}
  u_n(0)=\sum_{j=1}^{J}g_n^j[e^{it_n^j\Delta}\phi^j]+W_n^J
\end{align}
and
\begin{align}\label{energy}
  \lim_{n\to\infty}E(u_n)=\sum_{j=1}^J\lim_{n\to\infty}E(e^{it_n^j\Delta}\phi^j)+\lim_{n\to\infty}E(W_n^J).
\end{align}
Since the kinetic energy decoupling \eqref{energy-decoupling} and energy is non-negative, then
$$\lim_{n\to\infty}E(e^{it_n^j\Delta}\phi^j)\leq E_c.$$

We claim that there only exists one profile, i.e., $J=1$. We will show this property by contradiction.
If more than one $\phi^j\neq0$, the energy decoupling implies that, for each $j\geq1$,
\begin{align}\label{E:B}
  \|\phi^j\|_{\dot{H}^1}<\|W\|_{\dot{H}^1}\ \text{and}\ \lim_{n\to\infty}E(e^{it_n^j\Delta}\phi^j)<E_c.
\end{align}
Fixing $j$, if $\big|\frac{x_n^j}{\lambda_n^j}\big|\to \infty $ up to a sub-sequence, by Lemma \ref{N-profile}, there exists a global solution $v_n^j$ to \eqref{INLC} with initial data $g_n^j[e^{it_n^j\Delta}\phi^j]$ and $\|v_n^j\|_{S(\R)}<\infty$. If $\big|\frac{x_n^j}{\lambda_n^j}\big|$ tends to $x_0\in (-\infty,\infty)$, we can always assume $x_n^j\equiv0$ in this case. For the setting $x_n^j\equiv0$, following the argument of \cite{KM}, we can construct the nonlinear profile $v^j$ associate to $(\phi^j,\{t_n^j\})$ such that
\begin{equation}\label{waveoperator}
\|v^j(t_n^j)-e^{it_n^j\Delta}\phi^j\|_{\dot{H}^1}\to 0,\ \ n\to\infty.
\end{equation}

Using this, Lemma \ref{energy-tra}, energy conservation and \eqref{E:B}, we can get $\sup_{t\in I_{\max}^j}\|v^j(t)\|_{\dot H^1}<\|W\|_{\dot H^1}$ and $E(v^j(t))<E_c$ which implies $\|v^j\|_{S(I_{\max}^j)}<\infty$ and $I_{\max}^j=\R$.

For the setting $\ x_n^j\equiv0$,  we define the rescaled functions
$$v_n^j(t,x)=g_n^j(v^j(\frac{t}{(\lambda_n^j)^2}+t_n^j, x)).$$ Thus $u_n(0)$ can be write
\begin{equation}\label{newprofile}
u_n(0)=\sum_{j=1}^{J}v_n^j(0)+\widetilde{W}_n^J,
\end{equation}
where
$$
\widetilde{W}_n=\sum_{j=1}^{J}\left[g_n^j[e^{it_n^j\Delta}\phi^j]-v_n^j(0)\right]+W_n^J.
$$
Note that, using the Strichartz estimate and the relations \eqref{vanishing-condition} - \eqref{waveoperator} we obtain
\begin{equation}\label{secondcondition}
 \lim_{J\to J^*}\lim_{n\to\infty}\|e^{it\Delta}\widetilde{W}_n^J\|_{S^1(\R)}=0.
\end{equation}

Let  $u_n^{J}$ denote by
\begin{align}
u_n^J(t)=\sum_{j=1}^{J}v_n^j(t)
\end{align}
and $F(z)=(I_{\alpha}\ast|\cdot|^{-b}|z|^p)|z|^{p-2}z$, thus $u_n^J(t)$ solves
$$i\partial_tu_n^J+\Delta u_n^J=-|x|^{-b}F(u_n^J)+e_n,$$
where
$$e_n=|x|^{-b}\left[F(\sum_{j=1}^Jv_n^j)-\sum_{j=1}^JF(v_n^j)\right].$$
We will use the perturbation theory to get a contradiction if we have proved the following three conditions:
 \begin{align}
    &\limsup_{J\to J^*}\limsup_{n\to\infty}\|u_n^J\|_{S^1(\R)}\lesssim1,\\
    &\limsup_{J\to J^*}\limsup_{n\to\infty}\|e^{it\Delta}(u_{0,n}-u_{n}^J(0))\|_{S^1(\R)}=0\\\label{Profile-main}
    &\limsup_{J\to J^*}\limsup_{n\to\infty}\|\nabla[(i\partial_t+\Delta)u_n^J+|x|^{-b}(I_{\alpha}\ast|\cdot|^{-b}|u_n^J|^p)|u_n^J|^{p-2}u_n^J]
    \|_{N(\R)}=0.
  \end{align}
  We first consider the first term, noting that
\begin{align*}
\Big|\sum_{j=1}^Jv_n^j\Big|^2=\sum_{j=1}^J|v_n^j|^2+\sum_{j\neq k}v_n^jv_n^k,
\end{align*}
then taking the $L_{t}^{\frac{q_0}{2}}L_x^{\frac{r_1}{2}}$ norm in both side, we can get
\begin{align}\label{sombounded}
\|u^J_n\|^2_{S^1(\R)}=\Big\|(\sum_{j=1}^{J}v_n^j)^2\Big\|_{L_{t}^{\frac{q_0}{2}}L_x^{\frac{r_1}{2}}}\leq
\sum_{j=1}^{J}\|v_n^j\|_{S^1(\R)}^2+\sum_{j\neq k}\|v_n^jv_n^k\|_{L_{t}^{\frac{q_0}{2}}L_x^{\frac{r_1}{2}}}.
\end{align}
The properties \eqref{smoothing} and \eqref{ZJ} imply that
$$\lim_{n\to\infty}\|v_n^jv_n^k\|_{L_{t}^{\frac{q_0}{2}}L_x^{\frac{r_1}{2}}}=0.$$
On the other hand, using profile decomposition we have
$$
\sum_{j=1}^J\|\phi^j\|_{\dot H^1}^2+\|W_n^J\|_{\dot H^1}^2=\|u_n(0)\|_{\dot H^1}^2+o_n(1)<\|W\|^2_{\dot H^1},
$$
which implies that $\sum_{j=1}^\infty\|\phi^j\|_{\dot H^1}^2\leq C_0$. Then we can choose $J_0\in \mathbb{N}$ large enough such that
$$\sum_{j=J_0}^\infty\|\phi^j\|_{\dot H^1}^2\leq \frac{\delta}{2},$$ for $\delta>0$ sufficiently small. Hence, fix $J>J_0$ and using \eqref{waveoperator} one has
$$
\sum_{j=J_0}^J\|v_n^j(0)\|_{\dot H^1}^2\leq \sum_{j=J_0}^J\left(\|v_n^j(0)-g_n^j[e^{it_n^j\Delta}\phi^j]\|_{\dot H^1}^2 +\|g_n^j[e^{it_n^j\Delta}\phi^j]\|_{\dot H^1}^2\right)\leq \delta.
$$
Therefore, by the Small Data Theory (Proposition \ref{LWP})
$$
\sum_{j=J_0}^J\|v_n^j\|_{S^1(\R)}^2\leq \delta,
$$
Applying \eqref{sombounded} and the last inequality we conlcude that $\|u^J_n\|_{S^1(\R)}$ is bounded (independent of J).

Now, we turn to prove the second term. Note that
$$u_{0,n}-u_n^J(0)=\widetilde{W}_n^J,$$
and by \eqref{secondcondition} we obtain the desired result.

 Finally, we turn to estimate \eqref{Profile-main}. Noting that
 \begin{align}\label{Reminder-1}
   (i\partial_t+\Delta)u_{n}^J+|x|^{-b}F(u_n^J)=|x|^{-b}[F(\sum_{j=1}^Jv_n^j)-\sum_{j=1}^JF(v_n^j)].
 \end{align}
 By directly computation, we have
 \begin{align*}
  \eqref{Reminder-1}=&|x|^{-b}\left(I_\alpha\ast|\cdot|^{-b}\left[\Big|\sum_{j=1}^{J}v_n^j\Big|^p-
  \sum_{j=1}^{J}|v_n^j|^p\right]\right)\Big|\sum_{j=1}^{J}v_n^j\Big|^{p-2}\sum_{j=1}^{J}v_n^j\\
  &+\sum_{j=1}^j|x|^{-b}(I_\alpha\ast|\cdot|^{-b}|v_n^j|^p)\Big|\sum_{j=1}^{J}v_n^j\Big|^{p-2}\sum_{j=1}^{J}v_n^j-\sum_{j=1}^{J}|x|^{-b}
  (I_\alpha\ast|\cdot|^{-b}|v_n^j|^p)|v_n^j|^{p-2}v_n^j\\
  =&\sum_{j=1}^J\Big(|x|^{-b}(I_\alpha\ast|\cdot|^{-b}|v_n^j|^p)\Big[\Big|\sum_{j=1}^{J}v_n^j\Big|^{p-2}\sum_{j=1}^{J}v_n^j-
  |v_n^j|^{p-2}v_n^j\Big]\Big)\\
  &+\sum_{j=1}^J|x|^{-b}\Big(I_\alpha\ast|\cdot|^{-b}\left[\Big|\sum_{j=1}^{J}v_n^j\Big|^p-
  \sum_{j=1}^{J}|v_n^j|^p\right]\Big)\Big|\sum_{j=1}^{J}v_n^j\Big|^{p-2}\sum_{j=1}^{J}v_n^j\\
  =&A+B.
 \end{align*}
 We only estimate the term $A$, the term $B$ is similar. A direct computation shows that
 \begin{align*}
   |\nabla A|\lesssim& |x|^{-b}(I_\alpha\ast|\cdot|^{-b}|v_n^k|^p)\left|\Big|\sum_{j=1}^{J}v_n^j\Big|^{p-2}\sum_{j=1}^{J}\nabla v_n^j-
  |v_n^k|^{p-2}\nabla v_n^k\right|\\
  &+|x|^{-b-1}(I_\alpha\ast|\cdot|^{-b}|v_n^k|^p)\left|\Big|\sum_{j=1}^{J}v_n^j\Big|^{p-2}
  \sum_{j=1}^{J}v_n^j-
  |v_n^k|^{p-2}v_n^k\right|\\
  &+|x|^{-b}(I_\alpha\ast|\cdot|^{-b}|v_n^k|^{p-1}\nabla v_n^k)\left|\Big|\sum_{j=1}^{J}v_n^j\Big|^{p-2}
  \sum_{j=1}^{J}v_n^j-
  |v_n^k|^{p-2}v_n^k\right|\\
  &+|x|^{-b}(I_\alpha\ast|\cdot|^{-b-1}|v_n^k|^p)\left|\Big|\sum_{j=1}^{J}v_n^j\Big|^{p-2}
  \sum_{j=1}^{J}v_n^j-
  |v_n^k|^{p-2}v_n^k\right|\\
  =&:I+II+III+IV,
 \end{align*}
where we omit the summation notation about indicator $k$. We only consider the first term $I$, since the other terms are similar. Applying Lemma \ref{HLS}, it suffices to show
 \begin{align}\label{ZJ-1}
  &\Big\|(I_\alpha\ast|\cdot|^{-b}|v_n^k|^p)|x|^{-b}|v_n^i||v_n^j|^{p-3}\nabla v_{n}^k\Big\|_{L_{t}^{q_0'}L_x^{r_0'}}\to 0, \ i\neq k\\\label{ZJ-2}
  &\Big\|(I_\alpha\ast|\cdot|^{-b}|v_n^k|^p)|x|^{-b}|\nabla v_n^i||v_n^j|^{p-2}\Big\|_{L_{t}^{q_0'}L_x^{r_0'}}\to0,\ \ i\neq k
\end{align}
as $n\to\infty$. We first estimate \eqref{ZJ-1}. For $b>1$, we have
\begin{align*}
 \Big\|(I_\alpha\ast|\cdot|^{-b}|v_n^k|^p)|x|^{-b}|v_n^i||v_n^j|^{p-3}\nabla v_{n}^k\Big\|_{L_{t}^{q_0'}L_x^{r_0'}}\lesssim \Big\||x|^{-1}v_n^i\nabla v_n^{k}\Big\|_{L_t^{\frac{q_0}2}L_x^{\frac{r_0}2}},
\end{align*}
by the orthogonality conditions \eqref{ZJ} and \eqref{smoothing}, then \eqref{ZJ-1} holds. If $b\leq1$, we use the following inequality
 \begin{align*}
\Big\|(I_\alpha\ast|\cdot|^{-b}|v_n^k|^p)|x|^{-b}|v_n^i||v_n^j|^{p-3}\nabla v_{n}^k\Big\|_{L_{t}^{q_0'}L_x^{r_0'}}\lesssim \Big\||x|^{-b}|v_n^i|^b\nabla v_n^{k}\Big\|_{L_t^{\frac{q_0}{1+b}}L_x^{\frac{r_0}{1+b}}},
\end{align*}
 we can obtain the same result.

We turn to prove \eqref{ZJ-2}. If $i\neq j$, the proof is the same as above. It suffices to show the case $i=j$. H\"older's inequality leads to
\begin{align*}
  \Big\|(I_\alpha\ast|\cdot|^{-b}|v_n^k|^p)|x|^{-b}|\nabla v_n^i||v_n^j|^{p-2}\Big\|_{L_{t}^{q_0'}L_x^{r_0'}}\lesssim \Big\|(I_\alpha\ast|\cdot|^{-b}|v_n^k|^p)\nabla v_n^i\Big\|_{L_t^{q}L_x^r},
\end{align*}
where $(q,r)$ satisfying
$$1-\frac1{r_0}=\frac1r+\frac{b}{r_0}+\frac{p-2-b}{r_1};$$
and
$$1-\frac1{q_0}=\frac1q+\frac{p-2}{q_0}.$$
By density and \eqref{smoothing}, we may assume $v_n^{k}=T_n^k\phi$ and $v_n^k=T_n^i\varphi$ where $\phi,\varphi\in C_c^\infty(\R\times\R^3)$. Using scaling and transformation, we obtain
\begin{align*}
 &\Big\|(I_\alpha\ast|\cdot|^{-b}|T_n^k\phi|^p)\nabla T_n^i\varphi\Big\|_{L_t^{q}L_x^r}\\
 \lesssim&(\lambda_n^k)^{-1}(\lambda_n^k)^{-\frac{p-1}2}\Big\|(I_\alpha\ast|\cdot|^{-b}T_n^k|\phi|^p)T_n^i\nabla \varphi\Big\|_{L_t^{q}L_x^r}\\
 \lesssim&(\lambda_n^k)^{-b-1}(\lambda_n^k)^{-\frac{p-1}2}\Big\|\Big(I_\alpha\ast T_n^k(|\cdot+x_n^k/\lambda_n^k|^{-b}|\phi|^p)\Big)T_n^i\nabla \varphi\Big\|_{L_t^{q}L_x^r}\\
 \lesssim&(\lambda_n^k)^{\alpha-b-1}(\lambda_n^k)^{-\frac{p-1}2}\Big\|T_n^k\Big(I_\alpha\ast (|\cdot+x_n^k/\lambda_n^k|^{-b}|\phi|^p)\Big)T_n^i\nabla \varphi\Big\|_{L_t^{q}L_x^r}\\
 \lesssim&(\lambda_n^k)^{\alpha-b-2+\frac2q+\frac3r}(\lambda_n^k)^{-\frac{p-1}2}\Big\|\Big(I_\alpha\ast (|\cdot+x_n^k/\lambda_n^k|^{-b}|\phi|^p)\Big)(T_n^k)^{-1}T_n^i\nabla \varphi\Big\|_{L_t^{q}L_x^r}.
\end{align*}
Since the choice of $(q,r)$, we have
\begin{align*}
 \Big \|(I_\alpha\ast|\cdot|^{-b}|T_n^k\phi|^p)\nabla T_n^i\varphi\Big\|_{L_t^{q}L_x^r}\lesssim&\Big\|\Big(I_\alpha\ast (|\cdot+x_n^k/\lambda_n^k|^{-b}|\phi|^p)\Big)(T_n^k)^{-1}T_n^i\nabla \varphi\Big\|_{L_t^{q}L_x^r}.
\end{align*}
H\"older's inequality and Hardy--Littlewood--Sobolev's inequality yield
$$I_\alpha\ast (|\cdot+x_n^k/\lambda_n^k|^{-b}|\phi|^p)\in L_t^{q_2}L_x^{r_2}(\R\times\R^3),$$
where
$$\frac1q=\frac1{q_2}+\frac1{q_1};\ \ \frac1{r}=\frac1{r_2}+\frac1{r_0}.$$
If $x_n\equiv0$, using the density of $C_c^\infty(\R\times\R^3)$ in space $L_t^{q_2}L_x^{r_2}(\R\times\R^3)$, thus there exist sequence $\{\varphi_n\}\subset C_c^\infty(\R\times\R^3)$ such that
$$\varphi_n\to I_\alpha\ast (|\cdot|^{-b}|\phi|^p)\ \textnormal{strongly in}\  L_t^{q_2}L_x^{r_2}(\R\times\R^3) .$$
If $\Big|\frac{x_n^k}{\lambda_n^k}\Big|\to\infty$, as $\phi\in C_c^\infty(\R\times\R^3)$, when $n$ large enough we have
$$I_\alpha\ast(|\cdot+x_n^k/\lambda_n^k|^{-b}|\phi|^p)\lesssim I_\alpha\ast (|\cdot|^{-b}|\phi|^p),$$
thus we can use $C_c^\infty$ function replacing it. It suffices to show that, for any $\phi_0$,
$$\|\phi_0(T_n^k)^{-1}T_n^i\nabla \varphi\|_{L_t^{q}L_x^r}\to 0$$
as $n\to\infty$. Using the support of $\phi_0,\varphi$, the result holds. For more detail, please see \cite{Vi}.

Up to now, we have shown only one profile, $\phi^1\neq0$ and $\phi^j=0$ for $j\geq2$.

\ We claim that $x_n^1 \equiv 0$. Otherwise, if $|\frac{x_n^1}{\lambda_n^1}|\rightarrow \infty$, then, by Proposition \ref{N-profile}, for all $n$ sufficiently large, there exists a global solution $v_n$ to \eqref{INLC} satisfying
$$
v_n(0)=\phi_n=g_n[e^{it_n\Delta}\phi] \;\;\textnormal{and}\;\; \|v_n\|_{S(\R)}\lesssim 1.
$$
Thus, by the long time perturbation, we show that $\|u_n\|_{S(\R)}<\infty$, which is a contradiction.

Now, we begin to prove the existence of a critical solution. As
$$E(e^{it_n^1\Delta}\phi^1)\to E_c$$
and by \eqref{newprofile} we deduce $E(v^1_n(0,x))\to E_c$, and so $E(\widetilde{W}_n^J)\to0$ which implies that
$$\widetilde{W}_n^J\to 0,\ \textnormal{strongly in} \ \dot{H}^1.$$
Then let $u_c=v^1$ (nonlinear profile of $(\phi^1,\{t_n^j\})$) which satisfies
$$\|u_c\|_{S(I_{\max})}=\infty$$
and $E(u_c)=E_c$. Thus, we complete the proof.
\end{proof}

\section{Preclusion of compact solutions}

In this section, we will remove the critical solution by the compactness of minimal energy blow-up solution and Morawetz estimate. Proposition \ref{N-profile} ultimately allows us to extend the construction of minimal energy blowup solutions from the radial to the non-radial setting; moreover, it guarantees that the compact solutions we ultimately construct remain localized near the origin, which facilitates the use of the localized virial argument.

Following the argument of Proposition \ref{MEBS}, we can get the compactness for minimal energy blow-up solution (see \cite{KM,KV} for more details).
\begin{proposition}\label{precompact}
  Let $u_c$ be the critical solution to \eqref{INLC}. There exists a frequency scale function $N:I_{\max}\to(0,\infty)$ such that
  $$\{N(t)^{-\frac12}u(t,N(t)^{-1}x):t\in I_{\max}\}$$
  is precompact in $\dot{H}^1(\R^3)$. In addition, we may assume $N(t)\geq1.$
\end{proposition}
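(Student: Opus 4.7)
The plan is to apply the Kenig--Merle style profile decomposition argument from the proof of Proposition \ref{MEBS}, but now to an arbitrary translated sequence $\{u_c(t_n)\}$ with $\{t_n\}\subset I_{\max}$. The key point is that the criticality of $u_c$ forces the decomposition to reduce to a single profile whose spatial and temporal translation parameters must be trivial; the resulting scale parameters define the desired frequency function $N(t)$.

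Fix $\{t_n\}\subset I_{\max}$. By Lemma \ref{energy-tra}, $\{u_c(t_n)\}$ is bounded in $\dot H^1$, so Proposition \ref{Linear-Profile} produces
$$u_c(t_n)=\sum_{j=1}^{J}g_n^j[e^{it_n^j\Delta}\phi^j]+W_n^J.$$
The kinetic and potential energy decouplings, together with $E(u_c)=E_c$ and the positivity and comparability of energy under the sub-threshold condition (Lemma \ref{energy-tra}), force each profile to satisfy $E(\phi^j)\leq E_c$ and $\|\phi^j\|_{\dot H^1}<\|W\|_{\dot H^1}$.

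I would first show that exactly one profile is nontrivial. If two or more profiles were nonzero, then by strict subadditivity of the (positive) energies each would satisfy $E(\phi^j)<E_c$, so each associated nonlinear profile $v^j$ would be globally defined and scattering by definition of $E_c$. Following the proof of Proposition \ref{MEBS}---using the asymptotic orthogonality \eqref{ZJ} together with the stability theorem (Proposition \ref{Stability})---the sum $\sum v_n^j$ would yield a finite Strichartz bound for $u_c$ on the forward portion of $I_{\max}$ starting at $t_n$, contradicting $\|u_c\|_{S(I_{\max})}=\infty$. Hence $J=1$ and, by energy decoupling, $W_n^1\to 0$ strongly in $\dot H^1$.

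It remains to rule out degeneracies of the parameters $(\lambda_n,x_n,t_n^1)$ of the single profile. If $|x_n|/\lambda_n\to\infty$, Proposition \ref{N-profile} produces global scattering solutions approximating $g_n[e^{it_n^1\Delta}\phi^1]$, and stability again forces $u_c$ to scatter---a contradiction. The case $t_n^1\to\pm\infty$ is excluded in the same spirit: the nonlinear profile constructed via the wave operator scatters in one time direction by construction, which via stability propagates to scattering of $u_c$. Therefore $x_n\equiv 0$ and $t_n^1\equiv 0$, giving
$$N(t_n)^{-1/2}u_c\bigl(t_n,N(t_n)^{-1}x\bigr)=\phi^1(x)+o_{\dot H^1}(1)$$
for $N(t_n):=\lambda_n^{-1}$, which is the claimed precompactness. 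The lower bound $N(t)\geq 1$ is a standard normalization: replace $N(t)$ by $\max(N(t),1)$ and use scaling invariance of $\dot H^1$ together with the uniform $\dot H^1$-bound on $u_c$ to verify that the enlarged family of rescaled orbits remains precompact. The principal obstacle throughout is the exclusion of the escape scenario $|x_n|/\lambda_n\to\infty$ in the absence of radial symmetry; this is exactly the step for which Proposition \ref{N-profile} was devised.
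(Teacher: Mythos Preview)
Your argument follows precisely the Kenig--Merle/Killip--Visan roadmap that the paper itself invokes (the paper gives no proof, only the sentence ``Following the argument of Proposition \ref{MEBS}\ldots\ see \cite{KM,KV}''), and the main steps---single profile via criticality, exclusion of $|x_n|/\lambda_n\to\infty$ through Proposition \ref{N-profile}, exclusion of $t_n^1\to\pm\infty$ via the wave operator construction---are correct and are exactly the intended ones.

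There is one genuine slip, however, in your justification of the normalization $N(t)\geq 1$. Replacing $N(t)$ by $\max(N(t),1)$ and appealing to the uniform $\dot H^1$ bound does \emph{not} preserve precompactness: if along some sequence $N(t_n)\to 0$, then the rescaled functions $N(t_n)^{-1/2}u_c(t_n,N(t_n)^{-1}\cdot)$ converge to a nonzero $\phi$, but the unrescaled $u_c(t_n,\cdot)$ concentrate at frequency zero and converge only weakly (not strongly) in $\dot H^1$. Boundedness in $\dot H^1$ is far from compactness. The correct argument (as in \cite{KV}) is either to rescale the entire solution by $\inf_t N(t)$ when this infimum is positive, or---when $\inf_t N(t)=0$---to pass to a subsequential limit along times $t_n$ with $N(t_n)\to 0$, obtain a new minimal blow-up solution from that limit, and verify that its frequency scale is bounded below by construction. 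This extraction step uses the stability theory and the minimality of $E_c$ once more; it is routine but not the one-liner you wrote.
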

\subsection{Finite-time blow-up}

The properties of compactness of $u_c$ imply the reduced Duhamel formula. Together with conservation of mass, we can preclude the blow-up setting.
\begin{proposition}[Reduced Duhamel formula \cite{KV}]\label{R-Duhamel}
  For $t\in [0,T_{\max})$, the following holds as a weak limit in $\dot{H}^1(\R^3)$:
  $$u(t)=i\lim_{T\to T_{\max}}\int_{t}^{T}e^{i(t-s)\Delta}(I_\alpha\ast|\cdot|^{-b}|u|^p)|x|^{-b}|u|^{p-2}u(s)ds.$$
\end{proposition}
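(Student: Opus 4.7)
The strategy is classical in the Kenig--Merle framework. For any $T\in(t,T_{\max})$, the Duhamel formula reads
\begin{equation*}
u(t) = e^{i(t-T)\Delta}u(T) + i \int_t^T e^{i(t-s)\Delta}\bigl[(I_\alpha\ast|\cdot|^{-b}|u|^p)|x|^{-b}|u|^{p-2}u\bigr](s)\,ds,
\end{equation*}
so the reduced formula will follow once one proves
\begin{equation}\label{pf:vanish}
e^{i(t-T)\Delta}u(T) \rightharpoonup 0 \quad \text{weakly in } \dot H^1(\R^3)\ \text{as}\ T \to T_{\max}.
\end{equation}

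The key input is the compactness furnished by Proposition \ref{precompact}. Along any sequence $T_n\to T_{\max}$, after passing to a subsequence, the rescaled profiles $\tilde u_n(x):=N(T_n)^{-1/2}u(T_n,N(T_n)^{-1}x)$ converge strongly in $\dot H^1$ to some $\tilde u_\infty$. The scaling identity for the free Schr\"odinger group then gives
\begin{equation*}
e^{i(t-T_n)\Delta}u(T_n)(x) = N(T_n)^{1/2}\bigl[e^{-i\tau_n\Delta}\tilde u_n\bigr](N(T_n)x),\qquad \tau_n := (T_n-t)\,N(T_n)^2,
\end{equation*}
and $\tau_n \geq T_n - t$ because $N(T_n)\geq 1$. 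I split into two cases. If $T_{\max}<\infty$, I first claim $N(T_n)\to\infty$; otherwise boundedness of $N(T_n)$ combined with the precompact rescaled orbit yields precompactness of $\{u(T_n)\}$ in $\dot H^1$, and extracting a strong limit $v_0$ lets Proposition \ref{LWP} provide a uniform forward existence time at $v_0$, while Proposition \ref{Stability} transfers this into an extension of $u$ past $T_{\max}$, contradicting maximality. With $N(T_n)\to\infty$ secured, the profile $u(T_n)$ concentrates at frequencies of order $N(T_n)$ while any test function $\phi\in C_c^\infty$ lives at bounded frequencies, forcing $\langle e^{i(t-T_n)\Delta}u(T_n),\phi\rangle_{\dot H^1}\to 0$ by frequency decoupling. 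In the global case $T_{\max}=\infty$ one has $\tau_n\to\infty$ automatically, and the standard fact that $e^{-i\tau\Delta}w\rightharpoonup 0$ in $\dot H^1$ as $|\tau|\to\infty$ for every fixed $w\in\dot H^1$ (Plancherel together with a stationary-phase/Riemann--Lebesgue argument, extended from Schwartz data by density), combined with the strong convergence $\tilde u_n\to\tilde u_\infty$ and the fact that the outer rescaling preserves weak vanishing in $\dot H^1$, yields \eqref{pf:vanish}.

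\textbf{Main obstacle.} The most delicate step is the blow-up case $T_{\max}<\infty$: the argument for $N(t)\to\infty$ requires upgrading the precompactness of the rescaled orbit to precompactness of $\{u(T_n)\}$ itself, and then invoking local well-posedness and stability to extend the solution slightly beyond $T_{\max}$. The subsequent frequency-decoupling step must also be executed uniformly in the scale parameter $N(T_n)$, which appears simultaneously in both factors of the $\dot H^1$ pairing; keeping track of this uniformity, rather than the weak convergence itself, is where the compactness hypothesis is really exploited.
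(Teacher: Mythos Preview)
The paper does not actually prove this proposition; it is stated with a citation to \cite{KV} and used as a black box. Your outline is precisely the standard Killip--Visan argument: write Duhamel from time $T$, and show that the linear piece $e^{i(t-T)\Delta}u(T)$ vanishes weakly as $T\to T_{\max}$ by exploiting the almost-periodicity of $u$ modulo the scaling $N(t)\geq 1$. Your treatment of the finite-time blow-up case (forcing $N(T_n)\to\infty$ via local well-posedness plus stability, then frequency decoupling) is exactly the cited argument.

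There is one genuine imprecision in the global case. You assert that ``the outer rescaling preserves weak vanishing in $\dot H^1$.'' This is false as a general statement: if $g_n(x)=\lambda_n^{-1/2}\psi(\lambda_n^{-1}x)$ with $\lambda_n\to\infty$ then $g_n\rightharpoonup 0$ in $\dot H^1$, yet $\lambda_n^{1/2}g_n(\lambda_n\cdot)=\psi\not\rightharpoonup 0$. The $\dot H^1$-isometric dilations form a non-compact group, and weak null sequences are not preserved under composition with an arbitrary sequence of such isometries. In your situation the rescaling parameter is $N(T_n)$, which is not fixed, so you cannot simply push the weak limit through it. The fix is to split once more: along any subsequence either $N(T_n)$ stays bounded, in which case the dilations converge to a fixed isometry and your argument goes through, or $N(T_n)\to\infty$, in which case you are back in the high-frequency concentration scenario and the same frequency-decoupling argument as in the blow-up case applies. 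Alternatively (and this is how it is often written), use directly that $N(t)\ge 1$ together with $x(t)\equiv 0$ gives \emph{uniform spatial tightness} of $\nabla u(T)$ in $L^2$; on a fixed ball one then controls the pairing by H\"older and the dispersive decay $\|\nabla e^{i(T-t)\Delta}\phi\|_{L^6_x}\to 0$ for Schwartz $\phi$. Either route closes the gap.
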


\begin{proposition}
  If $T_{\max}<\infty,$ then $u\equiv0.$
\end{proposition}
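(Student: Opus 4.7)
The plan is the standard Kenig--Merle treatment of the finite-time blow-up scenario in the energy-critical regime, adapted to the inhomogeneous Hartree nonlinearity. The strategy has three movements: force the frequency scale to diverge at $T_{\max}$, use the reduced Duhamel formula to produce vanishing $L^2$ mass at $T_{\max}$, and close via conservation of mass.

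First I would show $N(t)\to\infty$ as $t\uparrow T_{\max}$. If this fails along some sequence $t_n\uparrow T_{\max}$, the precompactness in Proposition \ref{precompact}, combined with the uniform bound $N(t_n)\le M$, yields a subsequence with $u(t_n)\to u_*$ strongly in $\dot H^1$. The local well-posedness (Proposition \ref{LWP}) produces a solution with data $u_*$ on an interval of length $\delta(\|u_*\|_{\dot H^1})>0$, and the stability of Proposition \ref{Stability}, applied with $u(t_n)$ close to $u_*$, extends $u$ past $T_{\max}$, contradicting maximality. With $N(t)\to\infty$, the same compactness gives concentration of the kinetic energy at the origin: for each $r,\eta>0$,
\begin{equation*}
\int_{|x|>r}|\nabla u(t,x)|^2\,dx<\eta\qquad\text{for all $t$ sufficiently close to $T_{\max}$.}
\end{equation*}

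Second, I would use the reduced Duhamel formula (Proposition \ref{R-Duhamel}) and the $L^2$-unitarity of $e^{it\Delta}$ to obtain, for $t<T_{\max}$,
\begin{equation*}
\|u(t)\|_{L^2}\le\int_t^{T_{\max}}\bigl\||x|^{-b}(I_\alpha\ast|\cdot|^{-b}|u(s)|^p)|u(s)|^{p-2}u(s)\bigr\|_{L^2}\,ds.
\end{equation*}
Using Hardy--Littlewood--Sobolev (Lemma \ref{HLS}), Hardy's inequality and the uniform $\dot H^1$ bound from Lemma \ref{energy-tra}, I expect the integrand to be controlled, with the critical-scaling deficit absorbed by the concentration just established. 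Since $T_{\max}-t\to 0$, this forces $u(t)\in L^2$ for every $t<T_{\max}$ and $\|u(t)\|_{L^2}\to 0$ as $t\uparrow T_{\max}$. Finally, for any $t^*$ close to $T_{\max}$ the data $u(t^*)$ lies in $\dot H^1\cap L^2=H^1$, and a standard approximation argument gives conservation of mass on $[0,t^*]$, so $\|u_0\|_{L^2}=\|u(t^*)\|_{L^2}\to 0$, hence $u_0\equiv 0$ and $u\equiv 0$.

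The main obstacle will be the $L^2$ bound on the nonlinear term in the second step: a direct HLS--Hölder chain estimating $\|F(u)\|_{L^2}$ from $\|u\|_{\dot H^1}$ alone fails by exactly one power of length because of the critical scaling. The fix is to combine the spatial concentration from the first step with the decay of $|x|^{-b}$ and the singularity of $I_\alpha$, tuning exponents inside the range $0<b\le\min\{(1+\alpha)/3,\alpha/2\}$ that governs the entire paper. This is precisely where the specific structure of the inhomogeneous Hartree nonlinearity, rather than the pure energy-critical NLS, is exploited.
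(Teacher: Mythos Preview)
Your strategy identifies the correct circle of ideas, but the execution of the second step has a genuine gap, and the paper's argument closes it through a different mechanism.

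The obstacle you flag is real and is not resolved by spatial concentration. Estimating $\|F(u(s))\|_{L^2}$ in terms of $\|u(s)\|_{\dot H^1}$ fails by exactly one power of the concentration scale: by the precompactness, $\|F(u(s))\|_{L^2}\sim N(s)$ up to constants depending only on the compact set. Your proposed fix would therefore require $\int_t^{T_{\max}}N(s)\,ds\to 0$, i.e.\ an \emph{upper} bound on the blow-up rate of the frequency scale, and neither the divergence $N(t)\to\infty$ nor the decay of $|x|^{-b}$ supplies this. The inhomogeneity $|x|^{-b}$ is already part of the critical scaling and cannot be borrowed a second time to close the estimate.

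The paper bypasses this entirely by a frequency decomposition rather than a spatial one. For a Littlewood--Paley cutoff $P_N$ (here $N$ is a free dyadic parameter, unrelated to the compactness scale $N(t)$), the reduced Duhamel formula and Bernstein give
\[
\|P_N u(t)\|_{L^2}\;\lesssim\; N^{3(\frac{5}{6}-\frac{1}{2})}\,\|F(u)\|_{L^1_t L^{6/5}_x([t,T_{\max})\times\R^3)}\;\lesssim\; N\,|T_{\max}-t|\,\|\nabla u\|_{L^\infty_t L^2_x}^{2p-1},
\]
the point being that $\|F(u)\|_{L^{6/5}_x}$ \emph{is} controlled by $\|u\|_{\dot H^1}^{2p-1}$ at critical scaling (via Hardy--Littlewood--Sobolev and Hardy), with the missing power of length supplied by the Bernstein factor $N$. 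On the complementary high frequencies, Bernstein gives $\|(1-P_N)u(t)\|_{L^2}\lesssim N^{-1}\|\nabla u\|_{L^2}$. Adding and sending $t\to T_{\max}$, then $N\to\infty$, yields $\|u(t)\|_{L^2}=0$ for one (hence all) $t$, and mass conservation finishes. No appeal to $N(t)\to\infty$, no spatial concentration, and no tuning of exponents is needed.
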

\begin{proof}
  We suppose that $T_{\max}<\infty$. Using Strichartz, Hardy--Littlewood--Sobolev's inequality, Bernstein's inequality, and Hardy's inequality, we have
  \begin{align*}
    \|P_{N}u(t)\|_{L_x^2}&\lesssim \|P_N[(I_{\alpha}\ast|\cdot|^{-b}|u|^p)|x|^{-b}|u|^{p-2}u]\|_{L_t^1L_x^2}\\
    &\lesssim N^{3(\frac5{6}-\frac12)}|T_{\max}-t|\|(I_{\alpha}\ast|\cdot|^{-b}|u|^p)|x|^{-b}|u|^{p-2}u\|_{L_t^\infty L_x^{\frac65}}\\
    &\lesssim N|T_{\max}-t|\|\nabla u\|_{L_t^\infty L_x^2}^{2p-1}.
  \end{align*}
  Hence, Bernstein's inequality again implies that
  \begin{align*}
    \|u(t)\|_{L_x^2}&\leq\|P_Nu(t)\|_{L_x^2}+\|(1-P_N)u(t)\|_{L_x^2}\\
   &\lesssim  N|T_{\max}-t|+N^{-1}.
  \end{align*}
  By conservation of mass, we deduce $\|u\|_{L^2}=0$ and hence $u\equiv0.$
\end{proof}

\subsection{Soliton-like case}
In this section we assume $T_{\max}=\infty$ and get a contradiction by a Morawetz estimate.
\begin{lemma}[Morawetz identity \cite{SX}]\label{M-identity}
Let $a:\R^N\rightarrow\R$ be a smooth weight. Define
$$M(t)=2{\rm Im}\int_{\R^N}\bar{u}\nabla u\cdot\nabla adx.$$
Then
\begin{align*}
 \frac{d}{dt}M_a(t)=&\int_{\R^N}(-\Delta\Delta a)|u|^2dx+4\int_{\R^N}a_{jk}{\rm Re}(\pa_j\bar{u}\pa_ku)dx\\
 &-\bigg(2-\frac{4}{p}\bigg)\int_{\R^N}\Delta a|x|^{-b}|u|^p(I_\alpha\ast|\cdot|^{-b}|u|^p)dx\\
 &-\frac{4b}p\int_{\R^N}\nabla a\cdot\frac{x}{|x|^2}(I_\alpha\ast|\cdot|^{-b}|u|^p)|x|^{-b}|u|^pdx\\
 &-\frac{2\mathcal{K}(N-\alpha)}{p}\int_{\R^N}\int_{\R^N}(\nabla a(x)-\nabla a(y))\cdot\frac{x-y}{|x-y|^{N-\alpha+2}}|y|^{-b}|u|^p|x|^{-b}|u|^pdydx,
\end{align*}
where subscripts denote partial derivatives and repeated indices are summed.
\end{lemma}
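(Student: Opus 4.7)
The plan is to differentiate $M_a(t)$ in time, substitute the equation to eliminate $\partial_t u$ and $\partial_t \bar u$, and then split the resulting expression into a linear piece (coming from $\Delta u$) and a nonlinear piece (coming from $F(u) := |x|^{-b}(I_\alpha\ast|\cdot|^{-b}|u|^p)|u|^{p-2}u$). Writing $V(x) := (I_\alpha\ast|\cdot|^{-b}|u|^p)(x) = \mathcal{K}\int |x-y|^{-(N-\alpha)}|y|^{-b}|u(y)|^p\,dy$, we have $i\partial_t u = -\Delta u - F(u)$, so
\[
\tfrac{d}{dt}M_a(t) \;=\; -2\,{\rm Re}\!\int (\Delta\bar u + \overline{F(u)})\,\nabla u\cdot\nabla a\,dx \;+\; 2\,{\rm Re}\!\int \bar u\,\nabla(\Delta u + F(u))\cdot\nabla a\,dx.
\]

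For the linear piece, I would carry out the standard two-step integration by parts, which yields the well-known Morawetz contribution $\int(-\Delta^2 a)|u|^2 + 4\,a_{jk}\,{\rm Re}(\partial_j\bar u\,\partial_k u)\,dx$; this step is purely algebraic and does not see the nonlocal structure. For the nonlinear piece, I would first integrate by parts once in the term with $\nabla F(u)$, using that $F(u)\bar u = |x|^{-b}V|u|^p$ is real, to get
\[
2\,{\rm Re}\!\int \bar u\,\nabla F(u)\cdot\nabla a\,dx = -2\,{\rm Re}\!\int F(u)\,\nabla\bar u\cdot\nabla a\,dx - 2\!\int |x|^{-b}V|u|^p\,\Delta a\,dx.
\]
Combining with $-2\,{\rm Re}\!\int \overline{F(u)}\,\nabla u\cdot\nabla a\,dx$ (and using ${\rm Re}(\overline{F(u)}\nabla u) = {\rm Re}(F(u)\nabla\bar u)$) collapses the nonlinear piece to
\[
-4\,{\rm Re}\!\int F(u)\,\nabla\bar u\cdot\nabla a\,dx \;-\; 2\!\int |x|^{-b}V|u|^p\,\Delta a\,dx.
\]
Now I invoke the identity ${\rm Re}(|u|^{p-2}u\,\nabla\bar u) = \tfrac{1}{p}\nabla|u|^p$ and integrate by parts one more time, which distributes a derivative onto the three factors $|x|^{-b}$, $V$, and $\nabla a$. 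The derivative on $|x|^{-b}$ produces $-b|x|^{-b}\,x/|x|^2$ and thus the term $-\tfrac{4b}{p}\int \nabla a\cdot\tfrac{x}{|x|^2}\,V\,|x|^{-b}|u|^p\,dx$; the derivative on $\nabla a$ produces $\tfrac{4}{p}\int |x|^{-b}V|u|^p\Delta a\,dx$, which combines with the earlier $\Delta a$ term to give the coefficient $-(2-\tfrac{4}{p})$.

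The main obstacle, and the only genuinely nonlocal computation, is the term coming from $\nabla V$. Differentiating the explicit representation of $V$ gives
\[
\nabla V(x) = -\mathcal{K}(N-\alpha)\!\int\frac{x-y}{|x-y|^{N-\alpha+2}}\,|y|^{-b}|u(y)|^p\,dy,
\]
so the relevant term is a double integral whose integrand is $|x|^{-b}|u(x)|^p\,|y|^{-b}|u(y)|^p$ against $\tfrac{(x-y)\cdot\nabla a(x)}{|x-y|^{N-\alpha+2}}$. The kernel $\tfrac{x-y}{|x-y|^{N-\alpha+2}}$ is antisymmetric under $x\leftrightarrow y$ while the measure $|x|^{-b}|u(x)|^p|y|^{-b}|u(y)|^p\,dx\,dy$ is symmetric; averaging the integral with its $x\leftrightarrow y$ swap replaces $\nabla a(x)$ by $\tfrac{1}{2}(\nabla a(x) - \nabla a(y))$ and produces exactly the stated double-integral contribution $-\tfrac{2\mathcal{K}(N-\alpha)}{p}\iint(\nabla a(x)-\nabla a(y))\cdot\tfrac{x-y}{|x-y|^{N-\alpha+2}}|x|^{-b}|u|^p|y|^{-b}|u|^p\,dy\,dx$.

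Throughout, the computations are formal: to make them rigorous one would justify the boundary terms and the integration by parts by a density/approximation argument (truncating in both space and frequency and using the local theory of Section~2 to pass to the limit), but the identity itself is then obtained by collecting the four contributions above. The subtle step, and the one I would write out most carefully, is the symmetrization of the nonlocal integral, since it is the only place where the specific structure of $I_\alpha$ enters.
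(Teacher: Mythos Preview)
Your computation is correct and is exactly the standard derivation of the Morawetz identity for Choquard-type nonlinearities. Note, however, that the paper does not prove this lemma at all: it is stated with a citation to \cite{SX} and used as a black box. Your argument is essentially the one that would appear in that reference, so there is nothing to compare beyond observing that you have supplied the omitted proof.
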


\begin{lemma}[Tightness \cite{GM,KV}]
  Let $\epsilon>0$ and $p\geq b$.  Then there exists $R=R(\epsilon)$ sufficiently large so that
  \begin{align*}
    \sup_{t\in[0,\infty)}\int_{|x|>R}|\nabla u(t,x)|^2+|x|^{-2}|u(t,x)|^2+(I_\alpha\ast|\cdot|^{-b}|u|^p)|x|^{-b}|u|^p(t,x)dx<\epsilon.
  \end{align*}
\end{lemma}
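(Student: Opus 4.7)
The plan is to reduce all three tails to tightness for the precompact orbit $\mathcal K:=\{v(t,\cdot):t\in[0,\infty)\}\subset\dot H^1(\R^3)$ produced by Proposition~\ref{precompact}, where $v(t,y):=N(t)^{-1/2}u(t,N(t)^{-1}y)$. The change of variables $y=N(t)x$, combined with the identity $p=3+\alpha-2b$ (which makes the nonlocal potential density exactly scale-invariant), converts the three $x$-tails into the corresponding $y$-tails over $\{|y|>RN(t)\}$. Since $N(t)\ge 1$, these are dominated by the same integrals over $\{|y|>R\}$, and it suffices to prove
\[
\sup_{v\in\mathcal K}\int_{|y|>R}\Bigl(|\nabla v|^2+|y|^{-2}|v|^2+(I_\alpha\ast|\cdot|^{-b}|v|^p)|y|^{-b}|v|^p\Bigr)dy\longrightarrow 0\quad\text{as }R\to\infty.
\]

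\textbf{Kinetic and Hardy pieces.} The first term is the standard tightness of a precompact subset of $\dot H^1$: cover $\mathcal K$ by finitely many $\dot H^1$-balls of small radius centred at $C_c^\infty$ functions, and use the triangle inequality together with the common support bound. For the Hardy piece, take a smooth cutoff $\chi_R$ equal to $1$ on $\{|y|>R\}$, supported in $\{|y|>R/2\}$ and satisfying $|\nabla\chi_R|\lesssim R^{-1}$. Applying the global Hardy inequality to $\chi_R v$ and estimating $\||\nabla\chi_R|\,v\|_{L^2}\lesssim \|v\|_{L^6(R/2<|y|<R)}$ via Hölder on the annulus, the Hardy tail is dominated by the $\dot H^1$-tail plus the $L^6$-tail, the latter following from the former by Sobolev embedding localised to annuli.

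\textbf{Nonlocal piece.} This is the main step. Applying Hardy--Littlewood--Sobolev (Lemma~\ref{HLS}) with $s=6/(3+\alpha)$ (so that $\tfrac{1}{s}+\tfrac{1}{s}=1+\tfrac{\alpha}{3}$) yields
\[
\int_{|y|>R}(I_\alpha\ast|\cdot|^{-b}|v|^p)|y|^{-b}|v|^p\,dy\lesssim \bigl\||\cdot|^{-b}|v|^p\bigr\|_{L^s}\,\bigl\||\cdot|^{-b}|v|^p\chi_{|\cdot|>R}\bigr\|_{L^s}.
\]
The pointwise factorisation $|y|^{-b}|v|^p=(|y|^{-1}v)^b\cdot|v|^{p-b}$, valid because $p\ge b$, together with Hölder in the exponents $2/b$ and $6/(p-b)$, whose relation $1/s=b/2+(p-b)/6$ is precisely $p=3+\alpha-2b$, then gives
\[
\bigl\||\cdot|^{-b}|v|^p\chi_{|\cdot|>R}\bigr\|_{L^s}\lesssim \||y|^{-1}v\|_{L^2(|y|>R)}^{b}\,\|v\|_{L^6(|y|>R)}^{p-b}\longrightarrow 0
\]
uniformly on $\mathcal K$ by the previous step, while the unrestricted factor is bounded by $\|v\|_{\dot H^1}^{p}$ via Hardy and Sobolev.

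\textbf{Main obstacle.} The essential difficulty is that the kernel $I_\alpha$ does not preserve spatial localisation, so one cannot restrict both arguments of the HLS pairing to the exterior of the ball. The remedy is the asymmetric HLS split above: only the outer factor carries the cutoff, while the inner one is absorbed into the uniform $\dot H^1$-bound on $\mathcal K$. It is also crucial that $p=3+\alpha-2b$, for otherwise the nonlocal density would fail to scale identically with $|\nabla u|^2$ and the passage from $u$ to the precompact orbit $\mathcal K$ would break down.
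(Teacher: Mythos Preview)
The paper does not supply its own proof of this lemma; it is stated with a citation to \cite{GM,KV} and used as a black box in the soliton-like preclusion. Your argument is correct and is precisely the standard one underlying those references, adapted to the Hartree nonlinearity: pass to the precompact orbit $v(t)=N(t)^{-1/2}u(t,N(t)^{-1}\cdot)$, use $N(t)\ge 1$ to dominate the $x$-tails by the $y$-tails over $\{|y|>R\}$, and then exploit precompactness in $\dot H^1$ together with Hardy, Sobolev, and HLS. Your verification that the potential density scales with weight zero exactly when $p=3+\alpha-2b$ is the right way to see why the passage to $v$ preserves the nonlocal tail, and the asymmetric HLS split (cutoff on only one factor) is the appropriate remedy for the nonlocality of $I_\alpha$; this mirrors how the paper itself handles the potential term elsewhere (e.g.\ in the proof of \eqref{GN-inequality} and Lemma~\ref{Non-e1}), via the same factorisation $|y|^{-b}|v|^p=(|y|^{-1}|v|)^b|v|^{p-b}$ and the exponent relation $\tfrac{3+\alpha}{6}=\tfrac{b}{2}+\tfrac{p-b}{6}$.
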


\begin{proposition}
  If $T_{\max}=\infty$, then $u\equiv0.$
\end{proposition}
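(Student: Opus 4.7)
The plan is to run a truncated virial (Morawetz) argument using the weight $a_R(x)$ that agrees with $|x|^2/2$ on the ball $\{|x|\le R\}$ and is smoothly cut off so that $|\nabla a_R|\lesssim R$, $\|D^2 a_R\|_\infty\lesssim 1$, and $\Delta\Delta a_R$ is bounded with support in $\{R\le|x|\le 2R\}$. Feeding this weight into Lemma~\ref{M-identity} yields
\[
\tfrac{d}{dt}M_{a_R}(t)=4\|\nabla u\|_{L^2}^2-\tfrac{2}{p}\bigl[6p-12+4b+2(3-\alpha)\bigr]P(u)+\mathcal{E}_R(t),
\]
where $\mathcal{E}_R(t)$ collects all contributions localised to $\{|x|>R\}$ (from the bi-Laplacian, the deviation of $D^2 a_R$ from the identity, the truncation of the $\nabla a\cdot x/|x|^2$ term, and the nonlocal double integral where $\nabla a_R(x)-\nabla a_R(y)$ differs from $x-y$). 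A key observation is the algebraic identity $6p-12+4b+2(3-\alpha)=4p$ obtained from the scaling relation $p=3+\alpha-2b$, so the main part collapses to $4\bigl[\|\nabla u\|_{L^2}^2-P(u)\bigr]$.

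The coercivity of this main part is the heart of the argument. By Lemma~\ref{energy-tra} there is $\delta>0$ with $\|\nabla u(t)\|_{L^2}\le(1-\delta)\|\nabla W\|_{L^2}$ uniformly in $t$, and the sharp Gagliardo--Nirenberg inequality (together with $C_0\|\nabla W\|_{L^2}^{2p-2}=1$ from the Euler--Lagrange equation for $W$) gives
\[
P(u)\le C_0\|\nabla u\|_{L^2}^{2p}\le(1-\delta)^{2p-2}\|\nabla u\|_{L^2}^2.
\]
Hence $\|\nabla u\|_{L^2}^2-P(u)\ge\bigl[1-(1-\delta)^{2p-2}\bigr]\|\nabla u\|_{L^2}^2\ge c_0>0$, where $c_0$ depends only on the initial data through Lemma~\ref{energy-tra}. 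To absorb $\mathcal{E}_R(t)$ I will use the tightness lemma: given $\varepsilon>0$, pick $R$ so large that all integrands in $\mathcal{E}_R(t)$ are controlled by $\int_{|x|>R}\bigl[|\nabla u|^2+|x|^{-2}|u|^2+(I_\alpha*|\cdot|^{-b}|u|^p)|x|^{-b}|u|^p\bigr]dx<\varepsilon$ uniformly in $t$. The fact that $N(t)\ge 1$ from Proposition~\ref{precompact} guarantees this tightness holds at a fixed $R$ rather than one that shrinks, which is exactly what makes this step work. Choosing $\varepsilon=c_0/2$ fixes $R=R(u_c)$ and gives $\frac{d}{dt}M_{a_R}(t)\ge c_0/2$ for all $t\ge 0$.

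For the upper bound on $|M_{a_R}(t)|$, Cauchy--Schwarz and the Hardy inequality applied on $\{|x|\le 2R\}$ give
\[
|M_{a_R}(t)|\le 2\|\nabla a_R\|_\infty\,\|u\|_{L^2(|x|\le 2R)}\|\nabla u\|_{L^2}\lesssim R\cdot R\|\nabla u\|_{L^2}\cdot\|\nabla u\|_{L^2}\lesssim R^2,
\]
uniformly in $t\ge 0$, since $\|u\|_{L^2(|x|\le 2R)}\le 2R\||x|^{-1}u\|_{L^2}\lesssim R\|\nabla u\|_{L^2}$ and $\|\nabla u\|_{L^2}$ is controlled by the energy.

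Integrating the coercive lower bound over $[0,T]$ and comparing with the uniform upper bound yields $\tfrac{c_0}{2}T\le M_{a_R}(T)-M_{a_R}(0)\le CR^2$, so letting $T\to\infty$ forces $c_0=0$, i.e.\ $\|\nabla u_c\|_{L^2}=0$ and hence $u_c\equiv 0$, contradicting $\|u_c\|_{S(I_{\max})}=\infty$. The main obstacle I foresee is not the coercivity identity (which follows cleanly from the scaling algebra) but rather the careful handling of the nonlocal error term involving the double integral with $\nabla a_R(x)-\nabla a_R(y)$: splitting the $(x,y)$ domain into pieces where at least one variable exceeds $R$ and bounding each piece by Hardy--Littlewood--Sobolev against the tightness lemma requires care, particularly because of the $|y|^{-b}$ and $|x|^{-b}$ singularities, and this is where the hypothesis $b\le\min\{\tfrac{1+\alpha}{3},\tfrac{\alpha}{2}\}$ will once more be used to keep the exponents in admissible ranges.
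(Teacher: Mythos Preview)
Your proposal is correct and follows essentially the same approach as the paper: a truncated virial/Morawetz computation whose main term collapses to a constant multiple of $\|\nabla u\|_{L^2}^2-P(u)$ via the scaling identity $p=3+\alpha-2b$, coercivity from the energy trapping and sharp Gagliardo--Nirenberg inequality, error terms absorbed by the tightness lemma (using $N(t)\geq 1$), and the bound $|M_{a_R}(t)|\lesssim R^2$ integrated over $[0,T]$ to force $u\equiv 0$. The only slip is the intermediate coefficient $\tfrac{2}{p}$ in your displayed identity, which should be $\tfrac{1}{p}$ (and indeed your stated conclusion $4[\|\nabla u\|_{L^2}^2-P(u)]$ is then consistent); this is a harmless typo and does not affect the argument.
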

\begin{proof}
Let weight $a(x)$ denote by
\begin{align*}
a(x)=
  \begin{cases}
    |x|^2\ \ for\ \ |x|\leq R\\
    CR^2\ \ for\ \ |x|>2R,
  \end{cases}
\end{align*}
for some $C>1$. In this intermediate region, we can impose
$$|D^j a|\lesssim R^{2-j}\ \ for\ \ R<|x|\leq 2R.$$
On the other hand, Lemma \ref{M-identity} yields
  \begin{align}
    \frac{d}{dt}M(t)=&8\int_{\R^3}|\nabla u|^2-(I_\alpha\ast|\cdot|^b|u|^p)|x|^{-b}|u|^pdx\\
    &+\mathcal{O}\left(\int_{|x|>R}|\nabla u|^2+|x|^{-2}|u|^2+(I_\alpha\ast|\cdot|^b|u|^p)|x|^{-b}|u|^pdx\right).
  \end{align}
By Lemma \ref{energy-tra}, we have
$$E(u)\lesssim \frac{R^2}{\delta T}E(u)+\epsilon.$$
Choosing $T$ sufficiently large, we obtain $E(u)\lesssim \epsilon$. Then we deduce $E(u)\equiv0$ which implies $u\equiv0.$
 \end{proof}

\begin{proof}[Proof of Theorem \ref{theorem}] As in Section $4$, if Theorem \ref{theorem} fails, then there exits  a critical solution $u_c$ to \eqref{INLC}. From section $5$, we can conclude that $u_c\equiv0$, we derive a contradiction to $E(u_c)>0$. Thus we complete the proof of Theorem \ref{theorem}.

\end{proof}

We end this section by showing the corollaries.
\begin{proof}[Proof of Corollary \ref{Homog. setting}] We observe that Lemma \ref{Non-e1} also holds for $b=0$. We use the same admissible pairs with $b=0$. Thus, the results of well-posedness and stability hold in this case. Moreover, the other previous results used in the proof of Theorem \ref{theorem} also hold for this case. The proof is very similar to the inhomogeneous model. On the other hand, if the initial data is radial, then in the linear decomposition result we have $x_n^j\equiv 0$. Hence, the proof of this corollary follows exactly from Theorem \ref{theorem}. More precisely, when we constructed the critical solution (for the inhomogeneous case) we show that $x_n \equiv 0$. The same result as in the case $b\neq 0$. Therefore the proof is complete.
\end{proof}

\begin{proof}[Proof of Corollary \ref{def case}] The proof of this result also follows from Theorem \ref{theorem}. However, there are a few changes that we need to explain regarding the critical solution (Proposition \ref{MEBS}). In the focusing case, we have $E(u_0) < c$ and $\|u(0)\|_{\dot{H}^1}<\|W\|_{\dot{H}^1}$, On the other hand, in the defocusing case, it satisfies $E(u) < c$. It is important to note that we have previously shown that $E_c = E(u_c)$. For the defocusing case, we claim that $E_c = \infty$. In order to prove this, let's assume by contradiction that $E_c < \infty$. Since all the energies are positive and comparable with the kinetic energy, then the result follows similarly to the focusing case.
\end{proof}

\begin{center}

\end{center}
 \end{document}